\documentclass[oneside]{amsart}
\usepackage[utf8]{inputenc}
\usepackage{amstext}
\usepackage{amsthm}
\usepackage{amssymb}

\makeatletter
\numberwithin{equation}{section}
\numberwithin{figure}{section}
\newenvironment{lyxlist}[1]
	{\begin{list}{}
		{\settowidth{\labelwidth}{#1}
		 \setlength{\leftmargin}{\labelwidth}
		 \addtolength{\leftmargin}{\labelsep}
		 }}
	{\end{list}}

\usepackage{enumitem}
\setlist[enumerate,1]{label={\(\mathrm{(\arabic*)}\)}, align=left, left=0pt}

\numberwithin{equation}{section}
\theoremstyle{plain}

\theoremstyle{definition}

\DeclareMathOperator{\Ul}{Ul}

\DeclareMathOperator{\Clop}{Clop}
\DeclareMathOperator{\Int}{Int}

\usepackage{xcolor}
\usepackage[
  colorlinks=true,
  linkcolor=blue,    
  citecolor=blue,   
  urlcolor=red       
]{hyperref}

\makeatother

\theoremstyle{plain}
\newtheorem{thm}{\protect\theoremname}
\theoremstyle{definition}
\newtheorem{defn}[thm]{\protect\definitionname}
\theoremstyle{plain}
\newtheorem{lem}[thm]{\protect\lemmaname}
\theoremstyle{remark}
\newtheorem{rem}[thm]{\protect\remarkname}
\theoremstyle{plain}
\newtheorem{prop}[thm]{\protect\propositionname}
\newtheorem{cor}[thm]{\protect\corollaryname}
\providecommand{\corollaryname}{Corollary}
\providecommand{\definitionname}{Definition}
\providecommand{\lemmaname}{Lemma}
\providecommand{\propositionname}{Proposition}
\providecommand{\remarkname}{Remark}
\providecommand{\theoremname}{Theorem}

\begin{document}

\title[A bitopological duality for some]{A bitopological duality for some  subordination Boolean algebras}



\author[S. A. Celani]{Sergio A. Celani}
\address{Departamento de Matemática\\
Facultad de Ciencias Exactas\\
Universidad Nacional del Centro. CONICET\\
Pinto 390\\Tandil. Argentina\\
ORCID:https://orcid.org/0000-0003-2542-4128}

\email{scelani@exa.unicen.edu.ar}



\subjclass{00A99, 08A40, 06E30}

\keywords{Subordination algebras, Stone duality, bitopological spaces,
subordination congruences, meet-homomorphisms}
\begin{abstract}
 S4-subordination algebras are a generalization of the closure algebras.
In this paper, we give a topological representation for S4-subordination
algebras by means of bitopological spaces $\left<X,\tau,\tau_{S}\right>$,
where $\left<X,\tau\right>$ is a Stone space and $\tau_{S}$ is a
topology that enables the characterization of the subordination relation.
We apply this bitopological representation to give a characterization
of S5-subordination algebras and lattice subordinations. We also show
that there exists a bijective correspondence between congruence compatible
with the subordination and certain closed subsets of the Stone space
$\left<X,\tau\right>$ that are also saturated sets of the space $\left<X,\tau_{S}\right>$.
Additionally, we explore two types of morphisms between S4-subordination
algebras: one based on Boolean homomorphisms and another based on
meet-homomorphisms. Finally, we provide a topological representation
for each type of morphism.
\end{abstract}

\maketitle
\maketitle

\section{Introduction}

The classes of subordination Boolean algebras $\mathsf{Sub}$ \cite{Bezhanesville-Nick and venema},
precontact algebras \cite{D=0000FCntsch-Vakarelov2007} $\mathsf{PreC}$,
and quasi-modal algebras $\mathsf{QMA}$ \cite{Celani QUasi modal},
provide three equivalent presentations of the same first-order structure.
Each of these frameworks can be regarded as a generalization of compingent
algebras \cite{de Vries tesis}, contact algebras \cite{Dimo-Vakarelov-Contact algebras and region-based theory of space,G. Dimov and D. Vakarelov-Topological representation of precontact algebras,Dimov-Vakarelov-Topological representation of Stone I,Vakarelov-Region-based theory of space- Algebras of regions-representation theory},
and modal algebras \cite{Sambin}. 

It is well known that the category of modal algebras is dually equivalent
to the category of Stone spaces endowed with a binary relation satisfying
suitable conditions \cite{Sambin}. This duality can also be obtained
via the correspondence between relations on Stone spaces and 0-preserving
join-homomorphisms between Boolean algebras of clopen sets, as established
by Halmos \cite{Halmos1962} and by Wright \cite{Wright}. Such relational
dualities were later extended to subordination algebras in \cite{Celani QUasi modal}
, \cite{Dimov-Vakarelov-Topological representation of Stone I}, and
\cite{G. Dimov and D. Vakarelov-Topological representation of precontact algebras}.

Among subordination algebras, the class of S4-subordination algebras
plays a central role, as it generalizes closure algebras (also known
as interior algebras or topological Boolean algebras). This class
of structures was first introduced in \cite[Theorem 26]{Celani QUasi modal}
and \cite[Definition 22]{Celani-SubIrreQuasi} under the name of quasi-topological
algebras. Its relational duality has been studied from several perspectives.
In particular, \cite[Theorem 26]{Celani QUasi modal} establishes
dual equivalence between S4-subordination algebras and Stone spaces
endowed with a binary reflexive and transitive relation $R$ satisfying
suitable conditions; see also \cite[Theorem 4.11(2)]{Bezhanesville-Nick and venema}
and \cite{Dimov-Vakarelov-Topological representation of Stone I}.

In \cite{Bezhanishvili-Mines-Morandi 2007} Bezhanishvili, Mines,
and Morandi combine McKinsey-Tarski representation of closure algebras
with Stone duality for Boolean algebras to obtaina duality between
closure algebras and spaces equipped with both the Stone topology
and the McKinsey--Tarski topology. Since S4-subordination algebras
extend closure algebras, it is natural to seek a corresponding extension
of this bitopological duality.

The aim of the present paper is to develop a genuinely bitopological
duality for S4-subordination algebras. The dual objects are bitopological
spaces $\langle X,\tau,\tau_{S}\rangle$, where $\tau$ is the Stone
topology of the underlying Boolean algebra and $\tau_{S}$ is an auxiliary
topology whose basic open sets are the interiors $\mathrm{int}_{S}(U)$,
for $U\in\mathrm{Clop}(X)$, and which encodes the subordination relation.
More precisely, given a S4-subordination algebra $\langle A,\prec\rangle$,
the basic open sets of $\tau_{S}$ are the sets $S=\left\{ x\in\Ul(A):x\cap\downarrow a\neq\emptyset\right\} $,
for $a\in A$, where $\downarrow a=\{b\in A:b\prec a\}$. With this
construction, the subordination relation is recovered by the condition
$a\prec b$ if and only if $\varphi(a)\subseteq\mathrm{int}_{S}(\varphi(b))$,
where $\varphi$ denotes the Stone map.In the relational language
of \cite{Bezhanesville-Nick and venema,Celani QUasi modal}, this
corresponds to the condition $R[\varphi(a)]\subseteq\varphi(b)$.
Thus, the binary relation $R$ appearing in the relational dualities
of \cite{Bezhanesville-Nick and venema,Celani QUasi modal} is precisely
the specialization order of the topology $\tau_{S}$.

In the literature, there are well-established algebraic, topological,
and logical motivations for considering different notions of morphisms
between algebraic structures and their corresponding dual spaces.
A classical example is the mentioned Halmos-Wright duality  (see
\cite{Halmos1962} and \cite{Wright}). Since a modal operator in
a Boolean algebra is a meet-homomorphism on the underlying algebra,
the topological representation of modal algebras can be obtained from
the results of Halmos and Wright (see \cite{Sambin}). Further examples
arise in the study of de Vries algebras, referred to as Boolean $\delta$-algebras
in \cite{Fedorchuk1973}. De Vries \cite{de Vries tesis} showed that
the category of compact Hausdorff spaces and continuous maps is dually
equivalent to the category of complete compingent Boolean algebras
equipped with suitable morphisms, known as de Vries morphisms. However,
the composition of de Vries morphisms does not coincide with their
set-theoretic composition. In \cite{Fedorchuk1973}, Fedorchuk observed
that de Vries morphisms which are complete Boolean homomorphisms admit
a particularly simple description and, in this restricted setting,
the composition of two such morphisms agrees with their set-theoretic
composition. He further considered the category of complete compingent
Boolean algebras with complete de Vries morphisms and proved that
it is dually equivalent to the category of compact Hausdorff spaces
and quasi-open maps. In this paper, we follow a similar approach by
considering different classes of homomorphisms between S4-subordination
algebras and describing their corresponding dual morphisms between
bitopological spaces.

The paper is organized as follows. In Section \ref{sec:Preliminaries},
we recall the necessary background on subordination relation, quasi-modal
operator, and the basic concepts on the Stone duality. In Section
\ref{sec:Topological-representation} we introduce S4-subordination
spaces and establish the main representation theorem. We will also
extend the representation to the class $\mathsf{SubS5}$ of S5-subordination
algebras (also known as quasi-monadic algebras in \cite{Celani-SubIrreQuasi})
and de Vries algebras. In Section \ref{sec:Lattice-subordinations}
we will characterize the class of lattice subordination \cite{Bezhanishvili2013}.
In Section \ref{sec:Subordination-congruences} we will apply the
topological representation to characterize the congruences compatible
with the subordination. The notion of congruence compatible with a
subordination was introduced in \cite{Celani QUasi modal} and \cite{Celani-SubIrreQuasi}
in terms of quasi-modal operators as a generalization of the notion
of congruences of modal algebras.

As an illustration of the flexibility of the bitopological approach,
we consider two natural choices of morphisms between S4-subordination
algebras. In Section \ref{sec:Categorical-dualities-I} we will study
morphisms between $S4$-subordination algebras based in Boolean homomorphisms,
obtaining a duality between the categories $\mathbf{SubS4}_{h}$ and
$\mathbf{BiSpS4}_{s}$, and between the categories $\mathbf{SubS4}_{st}$
 and $\mathbf{BiSpS4}_{st}$. In Section \ref{sec:Subordination-meet-hemimorphisms}
we study morphisms based on meet-homomorphisms, which is the natural
choice in the context of de Vries duality \cite{de Vries tesis}:
under de Vries duality, the dual morphisms of continuous maps between
compact Hausdorff spaces are meet-homomorphisms satisfying additional
conditions. 

\section{Preliminaries}\label{sec:Preliminaries}

In this section, we recall basic notions of Boolean algebras, with
particular emphasis on their topological representation \cite{Balbes}. 

Let $A$ be a Boolean algebra. A \textit{{filter}} is a subset $F$
of $A$ such that $1\in F$, if $a\leq b$ and $a\in F$, then $b\in F$,
and if $a,b\in F$, then $a\wedge b\in F$. An \textit{{ideal}}
is a subset $I$ of $A$ such that $0\in I$, if $a\leq b$ and $b\in I$,
then $a\in I$, and if $a,b\in I$, then $a\vee b\in F$. Denote by
${\rm {Fi}}(A)$ and ${\rm {Id}}(A)$ the set of all filters and the
set of all ideals of $A$, respectively. The \textit{{ideal (filter)
generated by a subset $X\subseteq A$}} is denoted by ${\rm {Idg}}(X)$
(${\rm {Fig}}(X)$). In particular, if $X=\{x\}$, then we write ${\rm {Idg}}(\{x\})=(x]=\{a\in A\colon a\leq x\}$
(${\rm {Fig}}(\{x\})=[x)=\{a\in A\colon x\leq a\}$). A proper filter
$P$ of $A$ is \textit{{prime}} if for each $a,b\in A$, $a\vee b\in P$
implies $a\in P$ or $b\in P$. It is known that a filter $P$ of
a Boolean algebra $A$ is prime iff is an ultrafilter. 

Let $\Ul(A)$ the set of all ultrafilters of a Boolean algebra $A$.
Let $\varphi:A\rightarrow\mathcal{P}(\Ul(A))$ be the map defined
by $\varphi(a)=\left\{ x\in\Ul(A):a\in X\right\} .$ It is known that
the family $\varphi\left[A\right]=\left\{ \varphi(a):a\in A\right\} $
is a base for a topology $\tau_{A}$ on $\Ul(A)$. The topological
space $\left<\Ul(A),\tau_{A}\right>$ is a Stone space, i.e., it is
compact, Hausdorff and totally disconnected. If $\left<X,\tau\right>$
is a Stone space, then the family $\Clop(X)$ of all closed and open
sets of $\left<X,\tau\right>$ is a base for $X$ and it is a Boolean
algebra under set-theoretical complement and intersection. Also, the
application $\varepsilon:X\rightarrow\Ul(\Clop(X))$ given by $\varepsilon(x)=\left\{ U\in\Clop(X):x\in U\right\} $
is a bijective and continuous function. With each Boolean algebra
$A$ we can associate a Stone space whose points are the elements
of $\Ul(A)$ with the topology determined by the base $\varphi\left[A\right]=\left\{ \varphi(a):a\in A\right\} $.
By the above considerations we have that, if $X$ is a Stone space,
then $X\cong\Ul(\Clop(X))$, and if $A$ is a Boolean algebra, then
$A\cong\Clop(\Ul(A))$ \cite{Balbes}.

It is known that if $A$ is a Boolean algebra and $\text{\ensuremath{\left<X,\tau\right>}}$
is the Stone space $A$, then there exists a duality between ideals
(filters) of $A$ and open (closed) sets. More precisely, for $I\in\mathrm{Id}(A)$,
\[
\beta(I)=\left\{ x\in X:I\cap x\neq\emptyset\right\} 
\]
 is an open subset of $\left<X,\tau\right>$, and for each $F\in\mathrm{Fi}(A)$,
\[
\alpha(F)=\left\{ x\in X:F\subseteq x\right\} 
\]
 is an closed subset of $\left<X,\tau\right>$. We note that $\beta(I)=\bigcup\left\{ \varphi(a):a\in I\right\} $
and $\alpha(F)=\bigcap\left\{ \varphi(a):a\in F\right\} $.

\begin{defn}
\cite{Bezhanesville-Nick and venema} A \emph{subordination on a Boolean
algebra} $A$ is a binary relation\emph{ $\prec\subseteq A\times A$
}satisfying the following conditions:
\begin{enumerate}
\item[(S1)] $0\prec0$ and $1\prec1$, 
\item[(S2)] $a\prec b,c$ implies $a\prec b\wedge c$, 
\item[(S3)] $a,b\prec c$ implies $a\vee b\prec c$, 
\item[(S4)] $a\leq b\prec c\leq d$ implies $a\prec d$.
\end{enumerate}
\end{defn}

We denote by $\mathsf{Sub}$ the class of subordination algebras. 

Let $A\in\mathsf{Sub}$, and $a\in A$ and $C\subseteq A$. We define
the following subsets of $A$:
\begin{enumerate}
\item ${\uparrow}a=\left\{ b\in A:a\prec b\right\} $ and ${\downarrow}a=\left\{ b\in A:b\prec a\right\} $.
\item ${\uparrow}C=\bigcup\left\{ \uparrow a:a\in C\right\} $ and ${\downarrow}C=\bigcup\left\{ {\downarrow}a:a\in C\right\} $. 
\end{enumerate}
\begin{defn}
\cite{Celani QUasi modal} Let $A$ be a Boolean algebra. A \emph{quasi-modal
operator} on $A$ is a map $\Delta:A\rightarrow\mathrm{Id}(A)$ such
that it satisfies the following conditions for all\emph{ }$a,b\in A:$
\begin{enumerate}
\item[Q1] $\Delta(a\wedge b)=\Delta a\cap\Delta b$,
\item[Q2] $\Delta1=A$. 
\end{enumerate}
\end{defn}

A pair $\left<A,\Delta\right>$, where $\Delta:A\rightarrow\mathrm{Id}(A)$
is a quasi-modal operator and $A$ is a Boolean algebra is called
a \emph{quasi-modal algebra.} A quasi-modal operator $\Delta$ is
monotonic, because if $a\leq b$, then $a=a\wedge b$, and so $\Delta a=\Delta(a\wedge b)=\Delta a\cap\Delta b$,
i.e., $\Delta a\subseteq\Delta b$$.$ 

In Boolean algebras, the notions of subordination relations and quasi-modal
operator are equivalent \cite{Bezhanesville-Nick and venema}\cite{Celani- A survey a contact}.
If $\prec$ is a subordination in a Boolean algebra $A$, then by
defining $\Delta_{\prec}a={\downarrow}a$, for each $a\in A$, we
obtain that $\Delta_{\prec}$ is a quasi-modal operator on $A$. Conversely,
if $\Delta$ is a quasi-modal operator on $A$, then the relation
$\prec_{\Delta}$ given by $a\prec_{\Delta}b$ iff $a\in\Delta b$,
is a subordination relation on $A$. In this paper we will mainly
work with subordinations instead of quasi-modal operators.
\begin{lem}
\label{lem:filter and ideal}\cite{Bezhanesville-Nick and venema,Celani QUasi modal}
Let $A\in\mathsf{Sub}$. Then
\begin{enumerate}
\item ${\uparrow}a={\uparrow}[a)$, and ${\downarrow}a={\downarrow}(a]$,
for each $a\in L$. 
\item If $F\in\mathrm{Fi}(A)$, then ${\uparrow}$$F\in\mathrm{Fi}(A)$.
\item If $I\in\mathrm{Id}(A)$, then ${\downarrow}I\in\mathrm{Id}(A)$.
\end{enumerate}
\end{lem}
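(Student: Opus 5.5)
The plan is to verify each item directly from the axioms (S1)--(S4), using the definitions of ${\uparrow}$ and ${\downarrow}$ for elements and for subsets. Item (1) is a pure unfolding of the definitions. For (2) and (3) we check the filter and ideal axioms one by one. In each case only (S4) and one of (S2)/(S3) are needed, together with $1\prec1$ or $0\prec0$ from (S1).

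For (1), the inclusion ${\uparrow}a\subseteq{\uparrow}[a)$ is immediate since $a\in[a)$. Conversely, if $b\in{\uparrow}[a)$, then $c\prec b$ for some $c\geq a$. Then $a\leq c\prec b\leq b$, so (S4) gives $a\prec b$. The identity ${\downarrow}a={\downarrow}(a]$ is proved dually: if $b\prec c\leq a$, then (S4) yields $b\prec a$.

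For (2), let $F$ be a filter.
\begin{itemize}
\item \emph{Contains $1$:} $1\in F$ and $1\prec1$ by (S1), so $1\in{\uparrow}F$.
\item \emph{Upward closed:} if $b\in{\uparrow}F$ and $b\leq d$, pick $a\in F$ with $a\prec b$. Then (S4) gives $a\prec d$, so $d\in{\uparrow}F$.
\item \emph{Closed under meets:} let $b_1,b_2\in{\uparrow}F$, witnessed by $a_1\prec b_1$ and $a_2\prec b_2$ with $a_i\in F$. Put $a=a_1\wedge a_2\in F$. By (S4), $a\prec b_1$ and $a\prec b_2$. Then (S2) gives $a\prec b_1\wedge b_2$, so $b_1\wedge b_2\in{\uparrow}F$.
\end{itemize}

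For (3), the argument is the order dual of (2).
\begin{itemize}
\item \emph{Contains $0$:} $0\prec0$ by (S1) and $0\in I$, so $0\in{\downarrow}I$.
\item \emph{Downward closed:} this follows from (S4).
\item \emph{Closed under joins:} take $b_i\prec a_i$ with $a_i\in I$, and set $a=a_1\vee a_2\in I$. Then (S4) gives $b_i\prec a$, and (S3) gives $b_1\vee b_2\prec a$.
\end{itemize}

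There is no real obstacle. The only point needing care is the closure step in (2) and (3). There one must first move both subordinations to a common element of $F$ (resp.\ $I$) using (S4) and the filter (ideal) property, before applying (S2) (resp.\ (S3)).
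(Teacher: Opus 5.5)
Your proof is correct. Each item is a direct check from (S1)--(S4), and you handle the only delicate step properly: you first move both subordinations to a common element $a_1\wedge a_2\in F$ (resp.\ $a_1\vee a_2\in I$) via (S4), and only then invoke (S2) (resp.\ (S3)). The paper gives no proof of this lemma; it only cites the literature, so your direct verification from the axioms is the natural argument to supply.
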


\begin{thm}
\label{teorema1}Let $A\in\mathsf{Sub}$. Let $a\in A$ and $P\in\mathrm{\Ul}(A).$
Then $a\notin{\uparrow}P$ iff there exists $Q\in\Ul(A)$ such that
${\uparrow}P\subseteq Q$ and $a\notin Q$.
\end{thm}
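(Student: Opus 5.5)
The plan is to reduce the statement to the prime filter theorem for Boolean algebras, using Lemma \ref{lem:filter and ideal}(2) to know that ${\uparrow}P$ is a filter.

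First, since $P$ is an ultrafilter, it is in particular a filter. By Lemma \ref{lem:filter and ideal}(2), ${\uparrow}P$ is then a filter of $A$. Note that ${\uparrow}P$ is nonempty, because $1\prec1$ by (S1) and $1\in P$, so $1\in{\uparrow}P$.

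For the direction from right to left, suppose there is $Q\in\Ul(A)$ with ${\uparrow}P\subseteq Q$ and $a\notin Q$. Then $a\notin{\uparrow}P$ is immediate, since any element of ${\uparrow}P$ lies in $Q$.

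For the direction from left to right, assume $a\notin{\uparrow}P$ and consider the ideal $(a]$. I claim ${\uparrow}P\cap(a]=\emptyset$. Indeed, if $b\in{\uparrow}P$ and $b\leq a$, then $b\in{\uparrow}p$ for some $p\in P$, so $p\prec b\leq a$. By (S4) this gives $p\prec a$, hence $a\in{\uparrow}P$, a contradiction. (Equivalently, ${\uparrow}P$ is upward closed because it is a filter.) The prime filter theorem for Boolean algebras now applies: a filter disjoint from an ideal extends to a prime filter disjoint from that ideal. Applied to ${\uparrow}P$ and $(a]$, it yields a prime filter $Q$ with ${\uparrow}P\subseteq Q$ and $Q\cap(a]=\emptyset$, so in particular $a\notin Q$. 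As recalled in the preliminaries, prime filters of a Boolean algebra are exactly the ultrafilters, so $Q\in\Ul(A)$.

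There is no real obstacle here. The only points needing care are that ${\uparrow}P$ is a filter, which is supplied by Lemma \ref{lem:filter and ideal}, and that it is upward closed, so that disjointness from $(a]$ follows from $a\notin{\uparrow}P$. The properness of $Q$ is automatic, since $0\in(a]$ and $Q$ avoids $(a]$.
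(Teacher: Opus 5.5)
Your proof is correct. The paper gives no argument of its own here; it simply cites \cite{Celani QUasi modal}. Your route is the standard one: ${\uparrow}P$ is a filter by Lemma~\ref{lem:filter and ideal}, it is disjoint from the ideal $(a]$ by upward closure, and the prime filter theorem then extends it. This is the same filter-extension step the paper itself uses, for instance in the proof of Proposition~\ref{charact S4-subordination spaces}.
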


\begin{proof}
See \cite{Celani QUasi modal}.
\end{proof}

Let $A\in\mathsf{Sub}.$ Let $a\in A$. We will denote with ${\downarrow}^{2}a$
to the ideal ${\downarrow}({\downarrow}a)=\bigcup\left\{ {\downarrow}c:c\in{\downarrow}a\right\} $. 

We consider the following conditions defined on $\prec:$ 
\begin{enumerate}
\item[$\mathrm{(S5)}$]  $a\prec b$ implies $a\leq b$.

This condition is equivalent to the condition ${\downarrow}a\subseteq\left(a\right]$,
for $a\in A$.
\item[$\mathrm{(S6)}$]  $a\prec b$ implies that there exists $c\in A\in$ such that $a\prec c\prec b$. 

This condition is equivalent to the condition ${\downarrow}a\subseteq{\downarrow}^{2}a$,
for $a\in A$.
\item[$\mathrm{(S7)}$] $a\prec b$ implies $\neg b\prec\neg a$ . 
\item[$\mathrm{(S8)}$]  $a\neq0\text{ implies }\exists b\neq0\text{ }(b\prec a)$.
\end{enumerate}
Then we define the following subclasses of $\mathsf{Sub}$: 
\begin{enumerate}
\item The class of \emph{topological} subordination algebras, or \emph{S4-subordination}
algebras, $\mathsf{SubS4}=\mathsf{Sub}+\left\{ \mathrm{(S5),(S6)}\right\} $. 
\item The class of \emph{monadic} subordination algebras, or \emph{S5-subordination}
algebras, or quasi-monadic algebras, $\mathsf{SubS5}=\mathsf{SubS4}+\left\{ \mathrm{(S7)}\right\} $.
\item The class of \emph{compingent} algebras $\mathsf{Com}=\mathsf{SubS5}+\left\{ (\mathrm{S8})\right\} $. 
\item The class of de \emph{Vries} algebras $\mathsf{DeV}$, i.e., algebras
$\left<A,\prec\right>\in\mathsf{Com}$ such that $A$ is a complete
Boolean algebra.
\end{enumerate}

\section{Topological representation }\label{sec:Topological-representation}

We recall that a \emph{bitopological} space is a triple $\left<X,\tau,\sigma\right>$
where $\tau$ and $\sigma$ are two topologies on $X$. We write $\left<X,\tau,\tau_{S}\right>$
to denote a bitopological space where $\tau_{S}$ is a topology with
a base $S\subset\mathcal{P}(X)$. The interior and closure of any
subset $Y\subseteq X$ in the topology $\tau_{S}$ will be denoted
by $\mathrm{int}_{S}(Y)$ and $\mathrm{cl}_{S}(Y)$, respectively.

Let $\left<X,\tau,\tau_{S}\right>$ be a bitopological space. Let
$\Clop(X)$ be the Boolean algebra of clopen subsets of $\left<X,\tau\right>$.
Consider a relation $\prec_{S}$ in $\Clop(X)$ defined by 
\[
U\prec_{S}V\text{ sii }U\subseteq\mathrm{int}_{S}(V),
\]
for $U,V\in\Clop(X)$. Then it is immediate to see that $\left<\Clop(X),\prec_{S}\right>$
is a subordination algebra satisfying (S5). 

We define two auxiliary binary relations $\leq_{S}$ and $\triangleleft_{S}$
on $X$ as follows:
\begin{itemize}
\item $x\leq_{S}y$ iff $\forall U\in\Clop(X)\,[x\in\mathrm{int}_{S}(U)$
implies $y\in\mathrm{int}_{S}(U)${]}. 
\item $x\triangleleft_{S}y$ iff $\forall U\in\Clop(X)\,[x\in\mathrm{int}_{S}(U)$
implies $y\in U]$.
\end{itemize}
It is clear that $\leq_{S}$ is reflexive and transitive, $\leq_{S}\subseteq\triangleleft_{S}$,
and that $\triangleleft_{S}$ is reflexive. We note that the relation
$\leq_{S}$ is precisely the specialization order of the topological
space $\left<X,\tau_{S}\right>$.

\begin{defn}
\label{def:S4-space}A bitopological space $\left<X,\tau,\tau_{S}\right>$
is a\emph{ $\mathrm{S4}$-subordination} \emph{space} if: 
\begin{enumerate}
\item[$\mathrm{Sp1}$] $\left<X,\tau\right>$ is a Stone space.
\item[$\mathrm{Sp2}$] \label{Sp2}The family $S=\left\{ \mathrm{int}_{S}(U):U\in\Clop(X)\right\} $
is a base for the topology $\tau_{S}$. 
\item[$\mathrm{Sp3}$] \label{Sp3}$\mathrm{int}_{S}(U)\in\tau$, for each $U\in\Clop(X)$.
 
\item[$\mathrm{Sp4}$] \label{Sp4}$\triangleleft_{S}\subseteq\leq_{S}$. 
\end{enumerate}
\end{defn}

\begin{rem}
Let $\left<X,\tau,\tau_{S}\right>$ be a\emph{ $\mathrm{S4}$}-subordination
space. 

(1) We note that the relation $\triangleleft_{S}$ can be defined
using the map $\varepsilon:X\rightarrow\Ul(\Clop(X))$ defined by
$\varepsilon(x)=\left\{ U\in\Clop(X):x\in U\right\} $ as follows:
\begin{equation}
x\triangleleft_{S}y\text{ iff }{\uparrow}_{S}\varepsilon(x)\subseteq\varepsilon(y),\label{eq:homeomorfismo}
\end{equation}
for all $x,y\in X$, where 
\[
{\uparrow}_{S}\varepsilon(x)=\left\{ V\in\Clop(X):\exists U\in\Clop(X)\,\left(U\prec_{S}V\text{ and }U\in\varepsilon(x)\right)\right\} ,
\]
It is clear that ${\uparrow}_{S}\varepsilon(x)$ is a filter of $\Clop(X)$.

(2) Since $S$ is a base of $\tau_{S}$, we have that the relation
$\leq_{S}$ is the order specialization of $\left<X,\tau_{S}\right>$.

(3) We note that every closed subset of $\left<X,\tau\right>$ is
a compact subset of $\left<X,\tau_{S}\right>$. Indeed. Let $Y$ be
a closed subset in $\left<X,\tau\right>$. Let $C\subseteq\Clop(X)$
and suppose that $Y\subseteq\bigcup\left\{ \mathrm{int}_{S}(U):U\in C\right\} $.
Since $Y$ is closed in $\left<X,\tau\right>$, $Y$ is compact in
$\left<X,\tau\right>$. As $\mathrm{int}_{S}(U)$ is an open of $\left<X,\tau\right>$,
for each $U\in C$, there exists finite subset $\left\{ U_{1},\ldots,U_{n}\right\} $
of $C$ such that $Y\subseteq\mathrm{int}_{S}(U_{1})\cup\ldots\cup\mathrm{int}_{S}(U_{n})$.
Thus, $Y$ is compact in $\left<X,\tau_{S}\right>$. 
\end{rem}

Now we give some equivalent conditions of the condition $\mathrm{(Sp4)}$
of Definition \ref{def:S4-space}.
\begin{prop}
\label{charact S4-subordination spaces}Let $\left<X,\tau,\tau_{S}\right>$
be a bitopological space satisfying the conditions $\mathrm{(Sp1)}$
to \textup{$\mathrm{(Sp3)}$} of Definition \ref{def:S4-space}. Then
the following conditions are equivalent:
\begin{enumerate}
\item For all $U,V\in\Clop(X)$, if $V\prec_{S}U$, then there exists $W\in\Clop(X)$
such that $V\prec_{S}W\prec_{S}U$.
\item ${\downarrow}_{S}U\subseteq{\downarrow}_{S}\left({\downarrow}_{S}U\right)$,
for all $U\in\Clop(X)$.
\item $\triangleleft_{S}\subseteq\leq_{S}$.
\end{enumerate}
\end{prop}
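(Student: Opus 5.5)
The equivalence (1)$\Leftrightarrow$(2) is just the translation of (S6) into the ideal form already recorded after the definition of (S6). Here ${\downarrow}_{S}U=\{V\in\Clop(X):V\prec_{S}U\}$ and ${\downarrow}_{S}({\downarrow}_{S}U)=\bigcup\{{\downarrow}_{S}W:W\in{\downarrow}_{S}U\}$, so $V\in{\downarrow}_{S}({\downarrow}_{S}U)$ says exactly that there is $W$ with $V\prec_{S}W\prec_{S}U$. I will state this in one line. The real work is (1)$\Leftrightarrow$(3), and I will use the compactness available from (Sp1) and (Sp3) throughout.

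For (1)$\Rightarrow$(3), assume $x\triangleleft_{S}y$ and $x\in\mathrm{int}_{S}(U)$, and aim to show $y\in\mathrm{int}_{S}(U)$. By (Sp2) and (Sp3), $\mathrm{int}_{S}(U)=\bigcup\{V\in\Clop(X):V\subseteq\mathrm{int}_{S}(U)\}$. Indeed, $\mathrm{int}_{S}(U)$ is $\tau$-open and $\Clop(X)$ is a base of $\tau$. So there is a clopen $V$ with $x\in V\prec_{S}U$. By (1) there is $W$ with $V\prec_{S}W\prec_{S}U$. Then $x\in V\subseteq\mathrm{int}_{S}(W)$, and $x\triangleleft_{S}y$ gives $y\in W\subseteq\mathrm{int}_{S}(U)$.

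For (3)$\Rightarrow$(1), let $V\prec_{S}U$. I argue by contradiction: suppose that for every $W$ with $V\prec_{S}W$ we have $W\not\prec_{S}U$, i.e.\ $W\cap(X\setminus\mathrm{int}_{S}(U))\neq\emptyset$. The family $\{W\in\Clop(X):V\subseteq\mathrm{int}_{S}(W)\}$ is closed under finite intersections, since $\mathrm{int}_{S}$ preserves finite meets. Hence the closed sets $W\cap(X\setminus\mathrm{int}_{S}(U))$ have the finite intersection property, using that $X\setminus\mathrm{int}_{S}(U)$ is $\tau$-closed by (Sp3). By $\tau$-compactness we get $y\notin\mathrm{int}_{S}(U)$ lying in every such $W$. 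Next I need an $x\in V$ with $x\triangleleft_{S}y$, i.e.\ $x\notin\mathrm{int}_{S}(Z)$ for every clopen $Z$ with $y\notin Z$. Suppose not. Then $V\subseteq\bigcup\{\mathrm{int}_{S}(Z):y\notin Z\}$. Since $V$ is $\tau$-compact and these sets are $\tau$-open, finitely many $Z_{1},\dots,Z_{n}$ suffice. Then $V\subseteq\mathrm{int}_{S}(Z_{1})\cup\dots\cup\mathrm{int}_{S}(Z_{n})\subseteq\mathrm{int}_{S}(Z_{1}\cup\dots\cup Z_{n})$, and $y\notin Z_{1}\cup\dots\cup Z_{n}$, contradicting the choice of $y$. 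So such an $x$ exists. By (3) we get $x\leq_{S}y$; since $x\in V\subseteq\mathrm{int}_{S}(U)$, this gives $y\in\mathrm{int}_{S}(U)$, a contradiction.

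The main obstacle is this (3)$\Rightarrow$(1) direction, specifically the two compactness arguments and checking that the relevant families are directed. The key point is that finite unions of $S$-interiors sit inside the $S$-interior of the union, which makes the finite-cover reduction work. Equivalently, this is a prime filter argument in $\Clop(X)$ in the style of Theorem~\ref{teorema1}, using ${\uparrow}_{S}\varepsilon(x)$ as in the Remark; I will use the direct topological version above.
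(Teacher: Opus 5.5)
Your proof is correct. For (1)$\Leftrightarrow$(2) and for (1)$\Rightarrow$(3) you argue exactly as the paper does; the paper labels the latter direction (2)$\Rightarrow$(3). The difference is in the converse.

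The paper proves (3)$\Rightarrow$(2) algebraically in $\Clop(X)$, with three ultrafilters:
\begin{enumerate}
\item $P\ni V$, disjoint from ${\downarrow}_{S}({\downarrow}_{S}U)$;
\item $Q\supseteq{\uparrow}_{S}P$, obtained by extension, disjoint from ${\downarrow}_{S}U$;
\item $D\supseteq{\uparrow}_{S}Q$ with $U\notin D$, supplied by Theorem~\ref{teorema1}.
\end{enumerate}
Only then does it pass via $\varepsilon$ to points $x\triangleleft_{S}y\triangleleft_{S}z$. It uses transitivity of $\leq_{S}$ to conclude $z\in U$, a contradiction.

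You replace the prime filter theorem and Theorem~\ref{teorema1} with two direct $\tau$-compactness arguments, one on $X$ and one on $V$. Condition (Sp3) ensures that $\mathrm{int}_{S}(U)$ and the sets $\mathrm{int}_{S}(Z)$ are $\tau$-open. You also stop at two points: $y\notin\mathrm{int}_{S}(U)$, and $x\in V$ with $x\triangleleft_{S}y$.

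This shows that the paper's third point $z$ and its appeal to transitivity are unnecessary. A single application of $\triangleleft_{S}\subseteq\leq_{S}$ to $(x,y)$ already gives $y\in\mathrm{int}_{S}(U)$. Your version is also self-contained and does not rely on the cited Theorem~\ref{teorema1}.

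The paper's route, in exchange, stays in the ultrafilter language of ${\uparrow}_{S}\varepsilon(x)$. It reuses that language in later arguments, e.g.\ in Section~\ref{sec:Categorical-dualities-I}, so the same machinery serves several results.
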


\begin{proof}
It is clear the equivalence $(1)\Leftrightarrow(2)$.  We prove $(2)\Rightarrow(3)$.
Let $x,y\in X$ such that $x\triangleleft_{S}y$. Let $U\in\Clop(X)$.
We suppose that $x\in\mathrm{int}_{S}(U).$ As $\mathrm{int}_{S}(U)$
is an open of the Stone space $\left<X,\tau\right>$, there exists
$W\in\Clop(X)$ such that $W\subseteq\mathrm{int}_{S}(U)$. So, $W\prec_{S}U,$
and thus there exists $V\in\Clop(X)$ such that $W\subseteq\mathrm{int}_{S}(V)$
and $V\subseteq\mathrm{int}_{S}(U)$. As $x\in W\subseteq\mathrm{int}_{S}(V)$,
and $x\triangleleft_{S}y$, we have $y\in V$. Then $y\in\mathrm{int}_{S}(U)$,
and therefore $x\leq_{S}y$. 

We prove $(3)\Rightarrow(2)$. Suppose that there exists $U\in\Clop(X)$
such that $V\in{\downarrow}_{S}U$ and $V\notin{\downarrow}_{S}\left({\downarrow}_{S}U\right)$.
By Lemma \ref{lem:filter and ideal}, ${\downarrow}_{S}\left({\downarrow}_{S}U\right)$
is an ideal of $\Clop(X)$. Then there exists $P\in\Ul(\Clop(X))$
such that $V\in P$ and ${\downarrow}_{S}\left({\downarrow}_{S}U\right)\cap P=\emptyset$.
So, ${\downarrow}_{S}U$$\cap\uparrow_{S}P=\emptyset$, and as $\uparrow_{S}P$
is a filter, there exists $Q\in\Ul(\Clop(X))$ such that $\uparrow_{S}P\subseteq Q$
and ${\downarrow}_{S}U\cap Q=\emptyset$. By Theorem \ref{teorema1},
there exists $D\in\Ul(\Clop(X))$ such that $\uparrow_{S}Q\subseteq Z$,
and $U\notin D$. By Stone duality, there exist $x,y,z\in X$ such
that $P=\varepsilon(x)$, $Q=\varepsilon(y)$ and $D=\varepsilon(z)$.
So $\uparrow_{S}\varepsilon(x)\subseteq\varepsilon(y)$, $\uparrow_{S}\varepsilon(y)\subseteq\varepsilon(z)$.
Then $x\triangleleft_{S}y$ and $y\triangleleft_{S}z$. Since $\leq_{S}=\triangleleft_{S}$,
we have $x\leq_{S}y$ and $y\leq_{S}z$. But as $\leq_{S}$ is transitive,
$x\leq_{S}z$, i.e., $x\triangleleft_{S}z$. Since $x\in V$ and $V\in{\downarrow}_{S}U$,
i.e., $V\subseteq\mathrm{int}_{S}(U)$, we have $z\in U$, which is
a contradiction. Therefore, ${\downarrow}_{S}U\subseteq{\downarrow}_{S}\left({\downarrow}_{S}U\right)$.
\end{proof}

\begin{prop}
Let $\left<X,\tau,\tau_{S}\right>$ be a \emph{$\mathrm{S4}$-subordination}
\emph{space. Then $\left<\Clop(X),\prec_{S}\right>\in\mathsf{SubS4}$. }
\end{prop}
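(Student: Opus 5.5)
We check the axioms (S1)--(S6) for $\prec_{S}$ on $\Clop(X)$, defined by $U\prec_{S}V$ iff $U\subseteq\mathrm{int}_{S}(V)$. The text before Definition \ref{def:S4-space} already says that $\left<\Clop(X),\prec_{S}\right>$ is a subordination algebra satisfying (S5). For completeness we would recall the routine arguments, which only use that $\mathrm{int}_{S}$ is an interior operator of a topology.

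(S1): $\emptyset\subseteq\mathrm{int}_{S}(\emptyset)$ trivially, and $\mathrm{int}_{S}(X)=X$, so $X\prec_{S}X$.

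(S2): if $U\subseteq\mathrm{int}_{S}(V)$ and $U\subseteq\mathrm{int}_{S}(W)$, then $U\subseteq\mathrm{int}_{S}(V)\cap\mathrm{int}_{S}(W)=\mathrm{int}_{S}(V\cap W)$.

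(S3): unions on the left are immediate.

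(S4): this follows from monotonicity of $\mathrm{int}_{S}$.

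(S5): $U\subseteq\mathrm{int}_{S}(V)\subseteq V$.

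The only remaining axiom is (S6), and here we invoke Proposition \ref{charact S4-subordination spaces}. The space satisfies (Sp1)--(Sp3), so that proposition applies. Condition (Sp4), $\triangleleft_{S}\subseteq\leq_{S}$, is item (3) of the proposition. By the implication $(3)\Rightarrow(1)$, whenever $V\prec_{S}U$ there is $W\in\Clop(X)$ with $V\prec_{S}W\prec_{S}U$, which is exactly (S6).

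The main obstacle is entirely contained in the implication $(3)\Rightarrow(1)$ of Proposition \ref{charact S4-subordination spaces}, which is taken as already established. If one wanted a direct argument, the plan would use compactness. Fix $V\subseteq\mathrm{int}_{S}(U)$. For each $x\in V$, use (Sp2) to write $\mathrm{int}_{S}(U)$ as a union of basic sets $\mathrm{int}_{S}(W)$, and use (Sp4) to choose such a $W$ with $x\in\mathrm{int}_{S}(W)$ and $W\subseteq\mathrm{int}_{S}(U)$. Then cover the $\tau$-compact set $V$ by finitely many of the sets $\mathrm{int}_{S}(W_i)$, which are $\tau$-open by (Sp3). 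Finally take $W=W_{1}\cup\dots\cup W_{n}$, which gives $V\prec_{S}W\prec_{S}U$.

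The step requiring care in this direct route is choosing $W\subseteq\mathrm{int}_{S}(U)$. That is precisely where the ultrafilter argument from the proof of the proposition is needed, so citing the proposition is the cleanest choice.
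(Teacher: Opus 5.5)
Your proposal is correct and follows the paper's proof: the paper likewise treats (S1)--(S5) as immediate and gets (S6) from the implication $(3)\Rightarrow(1)$ of Proposition \ref{charact S4-subordination spaces}. As a side remark, the step you flag in your direct route does not actually need ultrafilters. By (Sp4), $x\in\mathrm{int}_{S}(U)$ gives $\bigcap\{W\in\Clop(X):x\in\mathrm{int}_{S}(W)\}=\{y:x\triangleleft_{S}y\}\subseteq\mathrm{int}_{S}(U)$. This family is closed under finite intersections and $\mathrm{int}_{S}(U)$ is $\tau$-open by (Sp3), so compactness alone yields a single such $W$ with $W\subseteq\mathrm{int}_{S}(U)$.
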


\begin{proof}
Is is immediate to see that $\prec_{S}$ is a subordination defined
on $\Clop(X)$ such that $U\subseteq V$ whenever $U\prec_{S}V$.
By Proposition \ref{charact S4-subordination spaces}, if $V\prec_{S}U$,
then there exists $W\in\Clop(X)$ such that $V\prec_{S}W\prec_{S}U$,
for all $U,V\in\Clop(X)$. Thus, $\left<\Clop(X),\prec_{S}\right>\in\mathsf{SubS4}$. 
\end{proof}

\begin{thm}
\label{prop:propierties of dual space of an alg}Let $\left<A,\prec\right>\in\mathsf{SubS4}$.
Let $\left<X,\tau\right>$ be the Stone space of $A$. Then the family
$S=\left\{ \beta({\downarrow}a):a\in A\right\} $ define a base for
a topology $\tau_{S}$ on $X$ closed under finite intersections such
that $\left<X,\tau,\tau_{S}\right>$ is a $\mathrm{S4}$-subordination
space. Moreover, 
\[
a\prec b\text{ iff }\varphi(a)\prec_{S}\varphi(b),
\]
for each $a,b\in A$.
\end{thm}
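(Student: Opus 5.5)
The proof establishes, in order, that $S$ is a base closed under finite intersections, the key identity $\mathrm{int}_{S}(\varphi(b))=\beta({\downarrow}b)$ for every $b\in A$, the equivalence $a\prec b$ iff $\varphi(a)\prec_{S}\varphi(b)$, and finally conditions (Sp1)–(Sp4).

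\textbf{The base $S$.} Condition (Q1) gives ${\downarrow}(a\wedge b)={\downarrow}a\cap{\downarrow}b$. Since these are ideals by Lemma~\ref{lem:filter and ideal}, and an ultrafilter meeting two ideals $I,J$ meets $I\cap J$ (if $c\in x\cap I$ and $d\in x\cap J$, then $c\wedge d\in x\cap I\cap J$), we get $\beta({\downarrow}a)\cap\beta({\downarrow}b)=\beta({\downarrow}(a\wedge b))$. Moreover $\beta({\downarrow}1)=\beta(A)=X$ by (Q2). Hence $S$ is closed under finite intersections, covers $X$, and is a base for a topology $\tau_{S}$.

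\textbf{The identity $\mathrm{int}_{S}(\varphi(b))=\beta({\downarrow}b)$.} This is the central step.
\begin{itemize}
\item For $\supseteq$: by (S5), ${\downarrow}b\subseteq(b]$, so $\beta({\downarrow}b)\subseteq\varphi(b)$. Since $\beta({\downarrow}b)$ is $\tau_{S}$-open, it lies inside the interior.
\item For $\subseteq$: if $x\in\beta({\downarrow}a)\subseteq\varphi(b)$, I would show $x\in\beta({\downarrow}b)$. From $\beta({\downarrow}a)=\bigcup_{c\prec a}\varphi(c)\subseteq\varphi(b)$ we get $c\le b$ for every $c\prec a$. Pick $c\in x$ with $c\prec a$. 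By (S6) there is $d$ with $c\prec d\prec a$; then $d\le b$, so $c\prec d\le b$ and by (S4) $c\prec b$. Thus $x\in\beta({\downarrow}b)$.
\end{itemize}

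Note that (S6) is used essentially here, and this is the main obstacle. The rest is bookkeeping, but without this identity one only gets $\beta({\downarrow}b)\subseteq\mathrm{int}_{S}(\varphi(b))$.

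\textbf{The equivalence.} Since every clopen is $\varphi(b)$, the identity shows the family $\{\mathrm{int}_{S}(U):U\in\Clop(X)\}$ coincides with $S$. We then have $\varphi(a)\prec_{S}\varphi(b)$ iff $\varphi(a)\subseteq\beta({\downarrow}b)$.
\begin{itemize}
\item If $a\prec b$, every $x\ni a$ meets ${\downarrow}b$, so $\varphi(a)\subseteq\beta({\downarrow}b)$.
\item Conversely, if $a\notin{\downarrow}b$, then since ${\downarrow}b$ is an ideal, the prime filter theorem gives an ultrafilter $x$ with $a\in x$ and $x\cap{\downarrow}b=\emptyset$. Hence $\varphi(a)\not\subseteq\beta({\downarrow}b)$.
\end{itemize}

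\textbf{Conditions (Sp1)–(Sp4).}
\begin{itemize}
\item (Sp1) is Stone duality.
\item (Sp2) follows from the identity above, since $S=\{\mathrm{int}_{S}(U):U\in\Clop(X)\}$ is the base of $\tau_{S}$.
\item (Sp3) holds because $\beta({\downarrow}b)$ is a union of sets $\varphi(c)$, hence $\tau$-open.
\item (Sp4): by the equivalence just proved, $\langle\Clop(X),\prec_{S}\rangle\cong\langle A,\prec\rangle$ via $\varphi$. Since $A$ satisfies (S6), so does $\langle\Clop(X),\prec_{S}\rangle$, which is condition (1) of Proposition~\ref{charact S4-subordination spaces}; that proposition then gives $\triangleleft_{S}\subseteq\leq_{S}$.
\end{itemize}
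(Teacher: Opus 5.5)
Your proof is correct and takes essentially the same route as the paper. Both arguments show that $S$ is a base closed under finite intersections, prove the key identity $\mathrm{int}_{S}(\varphi(b))=\beta({\downarrow}b)$ from (S5) and (S6), and derive $a\prec b$ iff $\varphi(a)\prec_{S}\varphi(b)$ from it. The only differences are that you argue elementwise where the paper uses ideal inclusions and monotonicity of ${\downarrow}$, and that you explicitly supply two steps the paper leaves implicit: the prime filter theorem in the converse direction, and the verification of (Sp4) via the isomorphism $\varphi$ together with Proposition~\ref{charact S4-subordination spaces}.
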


\begin{proof}
It is clear that $\beta({\downarrow}(a\wedge b))=\beta({\downarrow}a)\cap\beta({\downarrow}b)$,
for all $a,b\in A$. Moreover, $X=\beta({\downarrow}1)$. Thus $S=\left\{ \beta({\downarrow}a):a\in A\right\} $
define a base for a topology $\tau_{S}$ on $X$ closed under finite
intersections. Since ${\downarrow}a$ is an ideal for each $a\in A$,
we have $\beta({\downarrow}a)$ is an open set of the Stone space
$\left<X,\tau\right>$. Thus $\tau_{S}\subseteq\tau$. 

Now we prove that $\mathrm{int}_{S}(U)\in S$, for all $U\in\Clop(X)$.
Let $U\in\Clop(X)$. By Stone duality, there exists $a\in A$ such
that $U=\varphi(a)$. We prove that 
\begin{equation}
\mathrm{int}_{S}(\varphi(a))=\beta({\downarrow}a).\label{eq:igualdad-1}
\end{equation}
Since $\left<A,\prec\right>$ is reflexive i.e., ${\downarrow}a\subseteq\left(a\right]$
, we have $\beta({\downarrow}a)\subseteq\beta(\left(a\right])=\varphi(a)$.
Taking into account that $\beta({\downarrow}a)$ is an open of $\left<X,\tau_{S}\right>$,
because $S=\left\{ \beta({\downarrow}a):a\in A\right\} $ is a base
of $\left<X,\tau_{S}\right>$, we get 
\[
\mathrm{int}_{S}\beta({\downarrow}a))=\beta({\downarrow}a)\subseteq\mathrm{int}_{S}(\varphi(a)).
\]
We prove the other inclusion. Let $x\in\mathrm{int}_{S}(\varphi(a))=\bigcup\left\{ \beta({\downarrow}b):\beta({\downarrow}b)\subseteq\varphi(a)\right\} $.
Then there exists $b\in A$ such that $x\in\beta({\downarrow}b)\subseteq\varphi(a)=\beta(\left(a\right])$.
So, ${\downarrow}b\subseteq\left(a\right]$, and as $\downarrow$
is monotonic in the lattice of ideals of $A$, we have ${\downarrow}^{2}b={\downarrow}b\subseteq{\downarrow}\left(a\right]={\downarrow}a$,
and since ${\downarrow}b\cap x\neq\emptyset$, we get ${\downarrow}a\cap x\neq\emptyset$,
i.e. $x\in\beta({\downarrow}a)$. So, $\mathrm{int}_{S}(\varphi(a))\subseteq\beta({\downarrow}a)$.
Thus we have proved that $\mathrm{int}_{S}(U)\in S$.

As $\varphi$ is a Boolean isomorphism, we need only prove that $\varphi$
is a subordination homomorphism, i.e.,
\[
a\prec b\text{ iff }\varphi(a)\prec_{S}\varphi(b),
\]
for each $a,b\in A$. Let $a,b\in A$. Suppose that $a\prec b.$ As
$\downarrow b\subseteq\left(b\right]$, we have $\beta(\downarrow b)\subseteq\varphi(b)$,
and thus 
\[
\varphi(a)\subseteq\beta(\downarrow b)\subseteq\bigcup\left\{ \beta(\downarrow c):\beta(\downarrow c)\subseteq\varphi(b)\right\} =\mathrm{int}_{S}(\varphi(b)),
\]
ie., $\varphi(a)\prec_{S}\varphi(b)$. Conversely. If $\varphi(a)\subseteq\mathrm{int}_{S}(\varphi(b))$,
by (\ref{eq:igualdad-1}), we have $\mathrm{int}_{S}(\varphi(b))=\beta(\downarrow b)$.
So, $\varphi(a)\subseteq\beta(\downarrow b)$, i.e., $a\prec b$. 
\end{proof}

\begin{rem}
Let $\left<A,\prec\right>\in\mathrm{SubS4}$ and let $\left<X,\tau,\tau_{S}\right>$
be the $\mathrm{S4}$-subordination space of $\left<A,\prec\right>$.
We note that
\[
x\triangleleft_{S}y\text{ iff }\uparrow x\subseteq y,
\]
for any $x,y\in X$, because by Proposition \ref{prop:propierties of dual space of an alg}
we have:
\[
\begin{array}{ccl}
x\triangleleft_{S}y & \Leftrightarrow & \forall a(x\in\mathrm{int}_{S}(\varphi(a))\Rightarrow y\in\varphi(a)\\
 & \Leftrightarrow & \forall a\exists b(x\in\varphi(b)\subseteq\mathrm{int}_{S}(\varphi(a))=\beta(\downarrow a)\Rightarrow y\in\varphi(a))\\
 & \Leftrightarrow & \forall a\exists b(b\in x\,\&\,b\prec a\Rightarrow a\in y\\
 & \Leftrightarrow & \forall a(b\in{\downarrow}a\cap x\Rightarrow a\in y)\\
 & \Leftrightarrow & \forall a(a\in\uparrow x\Rightarrow a\in y)\\
 & \Leftrightarrow & \uparrow x\subseteq y.
\end{array}
\]
Similarly, we can see that 
\[
x\leq_{S}y\text{ iff }\uparrow x\subseteq\uparrow y.
\]
\end{rem}

\begin{rem}
\label{rem: definition of R}Let $\left<A,\prec\right>\in\mathsf{SubS4}$.
The relation $\triangleleft_{S}\subseteq X\times X$ considered in
our construction coincides with the relation $R$ defined in paragraph
following \cite[Definition 2.13]{Bezhanishvili-Mines-Morandi 2007}
and the relation $R$ defined in \cite[page 725]{Celani QUasi modal}.
However, as already observed in the proof of Theorem \ref{prop:propierties of dual space of an alg},
---the nature of the duality is essentially different. The representations
given in \cite{Bezhanishvili-Mines-Morandi 2007} and \cite{Celani QUasi modal}
are fundamentally relational and “modal-like,” in the sense that it
separates the Stone space from the additional relational structure.
By contrast, the representation given in \ref{prop:propierties of dual space of an alg}
is purely topological: all the algebraic information is captured by
a pair of topologies on the same underlying set. 
\end{rem}

Let $\left<X,\tau,\tau_{S}\right>$ be a $\mathrm{S4}$-subordination
space and let $\left<\Ul(\Clop(X)),\tau_{\varphi},\tau_{\varphi_{S}}\right>$
be the dual space of the topological subordination algebra $\left<\Clop(X),\prec_{S}\right>$.
We recall that the Stone topology $\tau_{\varphi}$ is generated by
the base $\left\{ \varphi(U):U\in\Clop(X)\right\} $, and the topology
$\tau_{\varphi_{S}}$ is generated by the base $S_{\varphi_{S}}=\left\{ \mathrm{int}_{\tau_{\varphi_{S}}}(\varphi(U)):U\in\Clop(X)\right\} $.
Now we prove that the map $\varepsilon:X\rightarrow\Ul(\Clop(X))$
is a bitopological homeomorphism. 
\begin{thm}
\label{lem:caracterizacion order-1}Let $\left<X,\tau,\tau_{S}\right>$
be a $\mathrm{S4}$-subordination space. Then the map $\varepsilon:X\rightarrow\Ul(\Clop(X))$
is a bitopological homeomorphism. 
\end{thm}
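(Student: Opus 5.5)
By Stone duality, $\varepsilon:X\rightarrow\Ul(\Clop(X))$ is already a homeomorphism between $\left<X,\tau\right>$ and $\left<\Ul(\Clop(X)),\tau_{\varphi}\right>$: it is a continuous bijection from a compact space onto a Hausdorff space, and it satisfies $\varepsilon[U]=\varphi(U)$ for every $U\in\Clop(X)$. So the whole proof reduces to showing that $\varepsilon$ is also a homeomorphism between $\left<X,\tau_{S}\right>$ and $\left<\Ul(\Clop(X)),\tau_{\varphi_{S}}\right>$. Because $\varepsilon$ is a bijection, it is enough to show that it carries a base of $\tau_{S}$ onto a base of $\tau_{\varphi_{S}}$. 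By (Sp2), the family $\left\{ \mathrm{int}_{S}(U):U\in\Clop(X)\right\}$ is a base for $\tau_{S}$. By Theorem \ref{prop:propierties of dual space of an alg} and (\ref{eq:igualdad-1}), applied to $\left<\Clop(X),\prec_{S}\right>$, a base for $\tau_{\varphi_{S}}$ consists of the sets
\[
\beta({\downarrow}_{S}U)=\left\{ P\in\Ul(\Clop(X)):\exists V\in P\,(V\prec_{S}U)\right\} ,
\]
for $U\in\Clop(X)$. The key step is therefore the identity
\[
\varepsilon[\mathrm{int}_{S}(U)]=\beta({\downarrow}_{S}U)\quad\text{for all }U\in\Clop(X).
\]

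For the inclusion $\supseteq$: let $\varepsilon(x)\in\beta({\downarrow}_{S}U)$. Then there is a clopen $V$ with $x\in V$ and $V\subseteq\mathrm{int}_{S}(U)$, so $x\in\mathrm{int}_{S}(U)$. For the inclusion $\subseteq$: let $x\in\mathrm{int}_{S}(U)$. By (Sp3) the set $\mathrm{int}_{S}(U)$ is $\tau$-open, and clopen sets form a base of the Stone space, so there is $V\in\Clop(X)$ with $x\in V\subseteq\mathrm{int}_{S}(U)$. Then $V\prec_{S}U$ and $V\in\varepsilon(x)$, hence $\varepsilon(x)\in\beta({\downarrow}_{S}U)$. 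Since $\varepsilon$ is bijective, it preserves unions, so images of $\tau_{S}$-open sets are $\tau_{\varphi_{S}}$-open, and preimages of basic $\tau_{\varphi_{S}}$-open sets are $\tau_{S}$-open. Hence $\varepsilon$ is a homeomorphism for both topologies simultaneously.

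The main obstacle is to make sure the basic opens of $\tau_{\varphi_{S}}$ really are the sets $\beta({\downarrow}_{S}U)$, i.e.\ that $\mathrm{int}_{\tau_{\varphi_{S}}}(\varphi(U))=\beta({\downarrow}_{S}U)$. I will take this from Theorem \ref{prop:propierties of dual space of an alg}, which is applicable because $\left<\Clop(X),\prec_{S}\right>\in\mathsf{SubS4}$ by the preceding proposition, and this membership itself relies on (Sp4) through Proposition \ref{charact S4-subordination spaces}. It is worth noting that (Sp3) is used essentially in the $\subseteq$ inclusion above. As a consistency check, the argument also yields $\varepsilon(x)\triangleleft\varepsilon(y)$ iff $x\triangleleft_{S}y$, in agreement with (\ref{eq:homeomorfismo}).
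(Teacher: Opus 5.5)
Your proof is correct and follows essentially the same route as the paper's. Both start from the Stone homeomorphism and then establish $\varepsilon[\mathrm{int}_{S}(U)]=\mathrm{int}_{\tau_{\varphi_{S}}}(\varphi(U))=\beta({\downarrow}_{S}U)$ through the same equivalence ($x\in\mathrm{int}_{S}(U)$ iff some clopen $V\ni x$ satisfies $V\prec_{S}U$), and bijectivity then gives openness and continuity. Your explicit appeal to (Sp3) in the $\subseteq$ inclusion makes visible a step that the paper's chain of equivalences uses without comment.
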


\begin{proof}
It is clear that the map $\varepsilon:X\rightarrow\Ul(\Clop(X))$
is an homeomorphism between the Stone spaces $\left<X,\tau\right>$
and $\left<\Ul(\Clop(X)),\tau_{\varphi}\right>$. Thus, we need to
prove only that $\varepsilon$ is an open and continuous map between
the topological spaces $\left<X,\tau_{S}\right>$ and $\left<\Ul(\Clop(X)),\tau_{\varphi_{S}}\right>$.
Since $\varepsilon$ is a bijective map and $S$ is a base, it is
enough to show that $\varepsilon[H]$ is an open subset of $\left<\Ul(\Clop(X)),\tau_{\varphi_{S}}\right>$,
for each $H\in S$. Let $H\in S$ and $x\in X$. Then there exists
$U\in\Clop(X)$ such that $H=\mathrm{int}_{S}U$. So, 
\[
\begin{array}{ccl}
x\in\mathrm{int}_{S}U & \text{iff } & \exists V\in\Clop(X)\,(x\in V\prec_{S}U)\\
 & \text{iff } & \exists V\in\varepsilon(x)\text{ and }\varphi(V)\prec_{S_{\varphi_{S}}}\varphi(U)\\
 & \text{iff } & \varepsilon(x)\in\mathrm{int}_{S_{\varphi_{S}}}\varphi(U).
\end{array}
\]
Then, $\varepsilon[H]=\mathrm{int}_{S_{\varphi_{S}}}\varphi(U)$ and
$\varepsilon$ is an open map. 

Let $B\in S_{\varphi_{S}}$. Then there exist $U\in\Clop(X)$ such
that $B=\mathrm{int}_{S_{\varphi_{S}}}\varphi(U)$. Let $x\in X$.
Then
\[
\begin{array}{ccl}
x\in\varepsilon^{-1}[\mathrm{int}_{S_{\varphi_{S}}}\varphi(U)] & \text{iff} & \varepsilon(x)\in\mathrm{int}_{S_{\varphi_{S}}}\varphi(U)\\
 & \text{iff} & \exists V\in\Clop(X)\,(\varepsilon(x)\in\varphi(V)\prec_{S_{\varphi_{S}}}\varphi(U))\\
 & \text{iff} & \exists V\in\Clop(X)\,(x\in V\prec_{S}U)\\
 & \text{iff} & x\in\mathrm{int}_{S}U.
\end{array}
\]
Thus, $\varepsilon^{-1}[\mathrm{int}_{S_{\varphi_{S}}}\varphi(U)]=\mathrm{int}_{S}U$,
and consequently $\varepsilon$ is a continuous map between the spaces
$\left<X,\tau_{S}\right>$ and $\left<\Ul(\Clop(X)),\tau_{\varphi_{S}}\right>$.
Therefore, $\varepsilon$ is an homeomorphism between the spaces $\left<X,\tau_{S}\right>$
and $\left<\Ul(\Clop(X)),\tau_{\varphi_{S}}\right>$.
\end{proof}

Having identified which are the $S4$-subordination spaces associated
with $S4$-subordination algebras, we will now identify the spaces
associated with the subordination algebras of the classes $\mathrm{SubS5}$,
$\mathsf{Com}$, and $\mathsf{DeV}$.

\begin{prop}
\label{lem:simetria}Let $\left<A,\prec\right>\in\mathsf{SubS4}$
and let $\left<X,\tau,\tau_{S}\right>$ be the $\mathrm{S4}$-subordination
space of $\left<A,\prec\right>$. Then
\end{prop}

\begin{enumerate}
\item The following conditions are equivalent:
\begin{enumerate}
\item $\left<A,\prec\right>\in\mathsf{SubS5}$.
\item $U\subseteq\mathrm{int}_{S}(V)$ iff $\mathrm{cl}_{S}(U)\subseteq V$,
for all $U,V\in\Clop(X).$
\item The relation $\triangleleft_{S}$ is symmetrical.
\end{enumerate}
\item $\left<A,\prec\right>$ satisfies the condition (S8) iff $\mathrm{int}_{S}(U)\neq\emptyset$,
for each $U\in\Clop(X)-\left\{ \emptyset\right\} $.
\end{enumerate}
\begin{proof}
For $\left(1\right),$we note the proof of the equivalence $\left(a\right)\Leftrightarrow\left(b\right)$
is immediate using duality. The equivalence $\left(a\right)\Leftrightarrow\left(c\right)$
follows by the results given in \cite{Celani QUasi modal} (see also
\cite{Bezhanesville-Nick and venema}). The item $\left(2\right)$
is immediate.
\end{proof}

We recall that a topological space $\left<X,\tau\right>$ is said
to be \emph{extremally} \emph{disconnected} if the closure of an open
is open \cite{Kopp}. Also, we recall that a Boolean algebra $B$
is complete iff the dual Stone space of $B$ is extremally disconnected.
\begin{defn}
\label{def:S5. compingent. and de Vries }Let $\left<X,\tau,\tau_{S}\right>$
be a $\mathrm{S4}$-subordination space. We shall say that:
\begin{itemize}
\item $\left<X,\tau,\tau_{S}\right>$ is a $\mathrm{S5}$\emph{-subordination}
space if it is a $\mathrm{S4}$-subordination space such that $U\subseteq\mathrm{int}_{S}(V)$
iff $\mathrm{cl}_{S}(U)\subseteq V$, for all $U,V\in\Clop(X)$.
\item $\left<X,\tau,\tau_{S}\right>$ is a \emph{compingent} space if it
a $\mathrm{S5}$-subordination space such that $\mathrm{int}_{S}(U)\neq\emptyset$,
for each $U\in\Clop(X)-\left\{ \emptyset\right\} $.
\item $\left<X,\tau,\tau_{S}\right>$ is a de \emph{Vries} \emph{space}
if it is a compingent space and $\left<X,\tau\right>$ is extremally
disconnected.
\end{itemize}
\end{defn}

We finish this section with a result that allows another characterization
of Vries spaces.
\begin{lem}
Let $\left<X,\tau,\tau_{S}\right>$ be $\left<X,\tau,\tau_{S}\right>$
is a $\mathrm{S5}$-subordination space such that $\left<X,\tau\right>$
is an extremally disconnected Stone space. Then the following conditions
are equivalent:
\begin{enumerate}
\item $\mathrm{int}_{S}(U)\neq\emptyset$, for all $U\in\Clop(X)-\left\{ \emptyset\right\} $.
\item For each $U\in\Clop(X)-\left\{ \emptyset\right\} $,
\[
U=\mathrm{cl}\left(\bigcup\left\{ V\in\Clop(X):V\subseteq\mathrm{int}_{S}(U)\right\} \right).
\]
\end{enumerate}
\end{lem}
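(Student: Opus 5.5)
The plan is to prove the two implications separately, working in $\langle X,\tau\rangle$ with the set
\[
W_U=\bigcup\left\{ V\in\Clop(X):V\subseteq\mathrm{int}_{S}(U)\right\}.
\]
The first step is to identify $W_U$ with $\mathrm{int}_{S}(U)$. By (Sp3), $\mathrm{int}_{S}(U)$ is an open set of the Stone space $\langle X,\tau\rangle$, and $\Clop(X)$ is a base of $\tau$. Hence $\mathrm{int}_{S}(U)$ is the union of the clopen sets it contains, that is, $W_U=\mathrm{int}_{S}(U)$. Moreover $\mathrm{int}_{S}(U)\subseteq U$, simply because an interior is contained in the set.

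For $(2)\Rightarrow(1)$, let $U\in\Clop(X)-\{\emptyset\}$ and suppose $\mathrm{int}_{S}(U)=\emptyset$. Then $W_U=\emptyset$, so $\mathrm{cl}(W_U)=\emptyset\neq U$, which contradicts (2).

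For $(1)\Rightarrow(2)$, fix $U\in\Clop(X)-\{\emptyset\}$.

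The inclusion $\mathrm{cl}(W_U)\subseteq U$ holds because $W_U\subseteq U$ and $U$ is $\tau$-closed.

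For the reverse inclusion, suppose $U-\mathrm{cl}(W_U)\neq\emptyset$. This set is $\tau$-open, so it contains some nonempty $V'\in\Clop(X)$. By (1), $\mathrm{int}_{S}(V')\neq\emptyset$. Since $\mathrm{int}_{S}$ is monotone and $V'\subseteq U$, we have
\[
\mathrm{int}_{S}(V')\subseteq\mathrm{int}_{S}(U)\cap V'=W_U\cap V'=\emptyset,
\]
where the last equality holds because $V'$ is disjoint from $\mathrm{cl}(W_U)$. This is a contradiction, so $U=\mathrm{cl}(W_U)$.

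The argument has no real obstacle. The only point needing care is the identification $W_U=\mathrm{int}_{S}(U)$, which rests on (Sp3). The S5 condition and extremal disconnectedness are not needed for this equivalence; they only serve to place the lemma in the de Vries setting.
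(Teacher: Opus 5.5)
Your proof is correct, but it takes a different and more economical route than the paper in the direction $(1)\Rightarrow(2)$.

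The paper puts $H=\bigcup\{V\in\Clop(X):V\subseteq\mathrm{int}_{S}(U)\}$ and uses extremal disconnectedness to make $\mathrm{cl}(H)$ clopen. Then $U\cap\mathrm{cl}(H)^{c}$ is itself a nonempty clopen to which (1) applies. A nonempty clopen inside $\mathrm{int}_{S}(U\cap\mathrm{cl}(H)^{c})\subseteq\mathrm{int}_{S}(U)\cap\mathrm{cl}(H)^{c}$ would lie in both $\mathrm{cl}(H)$ and its complement, a contradiction. This is the topological version of the complete-algebra argument, where one applies (S8) to $a\wedge\neg\bigvee\{b:b\prec a\}$.

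You avoid needing that difference to be clopen: you pick any nonempty clopen $V'$ inside the open set $U\setminus\mathrm{cl}(W_U)$, which only uses that $\Clop(X)$ is a base of $\tau$. Your version gains generality. It uses only (Sp1), (Sp3) and monotonicity of $\mathrm{int}_{S}$, so the equivalence holds in every S4-subordination space, and your remark that the S5 condition and extremal disconnectedness are superfluous is accurate.

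Nothing is lost by dropping them. Condition (2) already says that $U$ is the join of $\{V\in\Clop(X):V\prec_{S}U\}$ in $\Clop(X)$, since every clopen upper bound contains $\mathrm{cl}(W_U)=U$. The paper's extra hypotheses just reflect the de Vries setting in which the lemma is placed.
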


\begin{proof}
We prove $(1)\Rightarrow(2)$ Let $U\neq\emptyset$. Then $\mathrm{int}_{S}(U)\neq\emptyset$.
By $\mathrm{(Sp3)}$of Definition \ref{def:S4-space}, $\mathrm{int}_{S}(U)$
is an open of $\tau$. Since $\Clop(X)$ is a basis of $\left<X,\tau\right>$,
$\mathrm{int}_{S}(U)=\bigcup\left\{ W\in\Clop(X):W\subseteq\mathrm{int}_{S}(U)\right\} .$
So, there exists $W\in\Clop(X)$ such that $W\subseteq\mathrm{int}_{S}(U)$.
Let

Since $H=\bigcup\left\{ V\in\Clop(X):V\subseteq\mathrm{int}_{S}(U)\right\} \subseteq U$,
$\mathrm{cl}(H)\subseteq U$. As $H$ is an open set and $\left<X,\tau\right>$
is extremally disconnected, $\mathrm{cl}(H)$ is an open set, and
thus $\mathrm{cl}(H)\in\Clop(X)$. If $U\nsubseteq\mathrm{cl}(H)$
we get $U\cap\mathrm{cl}(H)^{c}\neq\emptyset$. By hypothesis, of
Def $\mathrm{int}_{S}(U\cap\mathrm{cl}(H)^{c})$$\neq\emptyset$.
So, there exists $W\in\Clop(X)-\left\{ \emptyset\right\} $ such that
\[
W\subseteq\mathrm{int}_{S}(U\cap\mathrm{cl}(H)^{c})\subseteq\mathrm{int}_{S}(U)\cap\mathrm{int}_{S}(\mathrm{cl}(H)^{c}).
\]
As $W\subseteq\mathrm{int}_{S}(U)\subseteq H\subseteq\mathrm{cl}(H)$,
i.e., $W\subseteq\mathrm{cl}(H)$, and as $W\subseteq\mathrm{int}_{S}(\mathrm{cl}(H)^{c})\subseteq\mathrm{cl}(H)^{c}$,
we have $W=\mathrm{cl}(H)\cap\mathrm{cl}(H)^{c}=\emptyset$, which
is a contradiction. So, $U\subseteq\mathrm{cl}(H)$, and thus $\mathrm{cl}(H)=U$.

$\left(2\right)\Rightarrow\left(1\right)$ Let $U\neq\emptyset$.
As $U=\mathrm{cl}\left(\bigcup\left\{ V\in\Clop(X):V\subseteq\mathrm{int}_{S}(U)\right\} \right)$,
there exists $W\subseteq\mathrm{int}_{S}(U)$ such that $W\neq\emptyset$.
Thus, $\mathrm{int}_{S}(U)\neq\emptyset$. 
\end{proof}

\section{Lattice subordinations }\label{sec:Lattice-subordinations}

The following class of subordination algebras was introduced in \cite{Bezhanishvili2013}.
\begin{defn}
Let $\left<A,\prec\right>\in\mathsf{Sub}.$ We shall say that $\prec$
is a lattice subordination if $\prec$ satisfies the following condition:
\begin{enumerate}
\item[$\mathrm{(S9)}$] If $a\prec b$, then there exists $c\in A$ such that $c\prec c$
and $a\leq c\leq b$. 
\end{enumerate}
\end{defn}

Let $\mathsf{SubLat}$ denote class of subordination algebras $\left<A,\prec\right>$
such that $\prec$ lattice subordination. We note if every lattice
subordination yields an $\mathsf{S4}$-subordination algebra (see
\cite[Lemma 2.2]{Bezhanishvili2013}). Hence, $\mathsf{SubLat}\subseteq\mathsf{SubS4}$.
 Moreover, it is easy to see that the set 
\[
B_{\prec}=\left\{ a\in A:a\prec a\right\} 
\]
 is a bounded sublattice of $A$ \cite{Bezhanishvili2013}. In \cite[Section 5]{Bezhanishvili2013}
it was proved that a subordination relation $\prec$ on a Boolean
algebra $A$ is a lattice subordination iff the relation $R_{\prec}\subseteq\Ul(A)\times\Ul(A)$
given by $(P,Q)\in R_{\prec}$ iff ${\uparrow}P\subseteq Q$ is a
Priestley quasi-order \cite{Cignoli -La Falce-Petrovich}. Here we
reinterpreted this result in terms of bitopological duality. Moreover,
the relation $R_{\prec}$ can be defined in terms of $B_{\prec}$,
since
\[
{\uparrow}P\subseteq Q\text{ iff }P\cap B_{\prec}\subseteq Q,
\]
for all $P,Q\in\Ul(A)$.
\begin{defn}
Let $\left<X,\tau,\tau_{S}\right>\in\mathrm{BiSpS4}.$ We shall say
that the topological preorden $\leq_{S}$ is \emph{separated}, if
for each $x,y\in X$ such that $x\nleq_{S}y$ there exists $U\in\Clop(X)$
such that $x\in U$, $y\notin U$, and $U=\mathrm{int}_{S}(U)$.
\end{defn}

\begin{thm}
Let $\left\langle X,\tau,\tau_{S}\right\rangle $ be the dual space
of $\left<A,\prec\right>\in\mathrm{SubS4}$. Then $\left<A,\prec\right>$
satisfies the condition $\mathrm{(S9)}$ iff $\leq_{S}$ is separated. 
\end{thm}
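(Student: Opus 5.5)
We work in the dual space $X=\Ul(A)$ with $\mathrm{int}_{S}(\varphi(a))=\beta({\downarrow}a)$, as given by (\ref{eq:igualdad-1}). The first step is to translate the condition $U=\mathrm{int}_{S}(U)$ for $U=\varphi(c)$. We have $\varphi(c)=\mathrm{int}_{S}(\varphi(c))$ iff $\varphi(c)\subseteq\beta({\downarrow}c)$, which by Theorem \ref{prop:propierties of dual space of an alg} means $\varphi(c)\prec_{S}\varphi(c)$, i.e.\ $c\prec c$. So the clopens that are $\tau_{S}$-open are exactly the sets $\varphi(c)$ with $c\in B_{\prec}$. We also use the description $x\leq_{S}y$ iff ${\uparrow}x\subseteq{\uparrow}y$ from the Remark following that theorem. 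Since the space is S4, (Sp4) gives that this is equivalent to $x\triangleleft_{S}y$, i.e.\ ${\uparrow}x\subseteq y$.

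\emph{(S9) implies separated.} Suppose $x\nleq_{S}y$, so ${\uparrow}x\not\subseteq y$. Pick $b\in{\uparrow}x\setminus y$; then there is $a\in x$ with $a\prec b$. By (S9) there is $c$ with $c\prec c$ and $a\leq c\leq b$. Then $c\in x$ because $a\in x$, and $c\notin y$ because $b\notin y$. Hence $U=\varphi(c)$ separates $x$ from $y$, and $U=\mathrm{int}_{S}(U)$ by the translation in the first step.

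\emph{Separated implies (S9).} Let $a\prec b$ and consider the set $C=\{c\in B_{\prec}:a\leq c\leq b\}$. Because $B_{\prec}$ is a sublattice, it suffices to find $c\in B_{\prec}$ with $\varphi(a)\subseteq\varphi(c)\subseteq\varphi(b)$. This is a compactness argument in two stages.

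First, fix $x\in\varphi(a)$ and $z\notin\varphi(b)$. Then $b\in{\uparrow}x\setminus z$, so ${\uparrow}x\not\subseteq z$, that is, $x\not\triangleleft_{S}z$, and hence $x\nleq_{S}z$ (since $\leq_{S}\subseteq\triangleleft_{S}$). Separation gives $c_{x,z}\in B_{\prec}$ with $x\in\varphi(c_{x,z})$ and $z\notin\varphi(c_{x,z})$.

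Next, for fixed $x$, the complements of the sets $\varphi(c_{x,z})$ cover the closed, hence compact, set $X\setminus\varphi(b)$. Taking a finite subcover and the meet of the corresponding $c_{x,z}$ gives $c_{x}\in B_{\prec}$ with $x\in\varphi(c_{x})\subseteq\varphi(b)$. The sets $\varphi(c_{x})$ then cover the compact set $\varphi(a)$, so finitely many suffice, and their join $c\in B_{\prec}$ satisfies $\varphi(a)\subseteq\varphi(c)\subseteq\varphi(b)$. Applying $\varphi^{-1}$ yields $a\leq c\leq b$ with $c\prec c$, which is (S9).

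The main point to get right is the first step, namely that $\mathrm{int}_{S}(\varphi(c))=\varphi(c)$ corresponds exactly to $c\prec c$. Once that is settled, the remainder is the standard Priestley-style double compactness argument, which relies on $B_{\prec}$ being closed under finite meets and joins (S2, S3 together with S4).
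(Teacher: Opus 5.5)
Your proof is correct. The forward direction is essentially the paper's. The paper picks a clopen $V\ni x$ with $V\prec_{S}U$ where $x\in\mathrm{int}_{S}(U)$ and $y\notin\mathrm{int}_{S}(U)$, and applies (S9) there; you do the same at the level of ultrafilters via ${\uparrow}x\not\subseteq y$.

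The converse is where you genuinely differ. The paper argues by contradiction and stays on the algebraic side. Assuming $[a)\cap(b]\cap B_{\prec}=\emptyset$, it uses the ultrafilter theorem to get $y\supseteq[a)\cap B_{\prec}$ with $b\notin y$. It then uses it again to get $x\ni a$ missing $B_{\prec}\setminus y$, so that $B_{\prec}\cap x\subseteq y$ although $x\ntriangleleft_{S}y$, and separation gives the contradiction.

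Your two compactness steps, first over $X\setminus\varphi(b)$ and then over $\varphi(a)$, are the topological counterparts of these two applications of the ultrafilter theorem. Your argument, however, is direct and produces the witness $c$ explicitly as a finite join of finite meets of separating elements. Closure of $B_{\prec}$ under finite meets and joins does the work that primeness of $y$ does in the paper, where it makes $B_{\prec}\setminus y$ closed under joins. This closure includes the empty meet and join, $1\prec1$ and $0\prec0$ by (S1), which cover the cases $b=1$ and $a=0$.

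In exchange, the paper's version makes visible the link with the description of $\triangleleft_{S}$ through $B_{\prec}$ recalled before the theorem. A failure of (S9) is witnessed by a pair $x,y$ with $B_{\prec}\cap x\subseteq y$ but ${\uparrow}x\not\subseteq y$.

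Your route also sidesteps a slip in the printed proof. There the ideal is generated by $(B_{\prec}\setminus y)\cap(b]$, which only yields $B_{\prec}\cap x\cap(b]\subseteq y$ rather than $B_{\prec}\cap x\subseteq y$. The factor $(b]$ should simply be dropped.
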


\begin{proof}
$\Rightarrow)$ Assume that $\left<A,\prec\right>$ satisfies the
condition $\mathrm{(S9)}$. Let $x,y\in X$ such that $x\nleq_{S}y$.
Then there exists $U\in\Clop(X)$ such that $x\in\mathrm{int}_{S}(U)$,
and $y\notin\mathrm{int}_{S}(U)$. As $\mathrm{int}_{S}(U)\in\tau$,
$x\in\mathrm{int}_{S}(U)$, and $\Clop(X)$ is a base, there exists
$V\in\Clop(X)$ such that $x\in V\subseteq\mathrm{int}_{S}(U)$. So,
$V\prec_{S}U$, and as $\left<A,\prec\right>$ satisfies (S9), there
exists $c\in A$ such that $c\prec c$ and $V\subseteq\varphi(c)\subseteq U$.
Then 
\[
x\in V\subseteq\varphi(c)=\mathrm{int}_{S}(\varphi(c))\subseteq\mathrm{int}_{S}(U),
\]
 and $y\notin\varphi(c)$. Thus, $\leq_{S}$ is separated. 

$\Leftarrow)$ Assume that $\leq_{S}$ is separated. Let $a,b\in A$
such that $a\prec b$. We need to prove that 
\[
\left[a\right)\cap\left(b\right]\cap B_{\prec}\neq\emptyset.
\]
Suppose the contrary. Let $F$ be the filter generated by $\left[a\right)\cap B_{\prec}$.
We prove that $b\notin F$. Otherwise, there exists $c\in\left[a\right)\cap B_{\prec}$
such that $c\leq b$. But this implies that $a\leq c\leq b$, i.e.,
$c\in\left[a\right)\cap\left(b\right]\cap B_{\prec}$, which is a
contradiction. Thus, there exists $y\in X$ such that 
\[
\left[a\right)\cap B_{\prec}\subseteq y\text{ and }b\notin y.
\]
Let $I$ be the ideal generated by the set $(B_{\prec}-y)\cap\left(b\right]$.
We note that $a\notin I$, because in contrary case there exists $d\in B_{\prec}$,
$d\notin y$ such that $a\leq d$. So, $d\in\left[a\right)\cap B_{\prec}\subseteq y$,
i.e., $d\in y$, which is impossible. Therefore, there exists $x\in X$
such that $a\in x$ and $\left((B_{\prec}-y)\cap\left(b\right]\right)\cap x=\emptyset$.
This implies that $B_{\prec}\cap x\subseteq y$. We note that $\uparrow x\nsubseteq y$,
because $a\in x$, $a\prec b$, and $b\notin y$. As $x\nleq_{S}y$,
and $\leq_{S}$ is separated, there exists $c\in A$ such that $c\prec c$,
$c\in x$ and $c\notin y$. So, $c\in B_{\prec}\cap x\subseteq y$,
which is a contradiction. Then $\left[a\right)\cap\left(b\right]\cap B_{\prec}\neq\emptyset$,
i.e., there exists $c\in A$ such that $c\prec c$ and $a\leq c\leq b$.
\end{proof}

\section{Subordination congruences}\label{sec:Subordination-congruences}

We recall that an open filter in a modal algebra $\left<A,\square\right>$
is a filter $F$ such that if $\square a\in F$ for each $a\in F$.
It is known that the lattice of normal filters of $\left<A,\square\right>$
is isomorphic to the lattice of congruences of $\left<A,\square\right>$
\cite{Sambin}. We recall that a \emph{round} \emph{filter} of a subordination
algebra $\left<A,\prec\right>$ is a filter $F$ of $A$ such that
for each $a\in F$ there exists $b\in A$ such that $b\prec a$ and
$b\in F$. It is clear that the notion of round filter is a generalization
of open filter. 

In \cite{Celani-SubIrreQuasi} was defined the notion of \emph{quasi-modal
congruence} as a Boolean congruence satisfying certain compatibility
property with the quasi-modal operator (see the following Definition
\ref{Def:Congruencia}). In \cite{Celani-SubIrreQuasi} was proved
that the lattice of quasi-modal congruences is isomorphic to the lattice
of round filters. In this section we prove that the lattice of round
filters of a $\mathrm{S4}$-subordination algebra $\left<A,\prec\right>$
are in bijective correspondence with the subsets of $\Ul(A)$ closed
with respect to the topology of Stone $\tau$ and saturated with respect
to the topology $\tau_{S}$. This result generalized the known correspondence
between closed subsets of a Stone space $\left<X,\tau\right>$ and
the congruences of the Boolean algebra $\Clop(X)$.
\begin{defn}
\label{Def:Congruencia}Let $\left<A,\prec\right>\in\mathsf{SubS4}$.
Let $\theta$ be a congruence of $A$. We shall say that $\theta$
is a \emph{subordination congruence}, or a $\prec$\emph{-congruence,
}if $\theta$ satisfies the following condition.
\begin{lyxlist}{00.00.0000}
\item [{SubCon}] \noindent If $c\prec a$ and $\left(a,b\right)\in\theta$,
then there exists $d\in A$ such that $d\prec b$ and $(c,d)\in\theta$. 
\end{lyxlist}
\end{defn}

We note that the notion of homomorphism that corresponds to the notion
of subordination congruence is the notion of strong subordination
homomorphism, called qm-morphism in \cite{Celani QUasi modal}. 

The condition SubCon together with the symmetry of $\theta$ implies
that if $\left(a,b\right)\in\theta$ and $d\prec b$, then there exists
$c\in A$ such that $c\prec a$ and $(c,d)\in\theta$. 

Let $\left<X,\tau,\tau_{S}\right>$ be the $S4$-subordination space
of $\left<A,\prec\right>\in\mathsf{SubS4}$. For each $\tau$-closed
$Y$ we consider the relation 
\[
\theta(Y)=\left\{ \left(a,b\right)\in A\times A:\varphi(a)\cap Y=\varphi(b)\cap Y\right\} .
\]
It is easy to see that $\theta(Y)$ is a Boolean congruence of $A$,
and that every Boolean congruence of $A$ is this form. 
\begin{defn}
Let $\left\langle X,\tau,\tau_{S}\right\rangle $ be a $\mathrm{S4}$-subordination
space. We shall say that a subset $Y$ of $X$ is \emph{saturated
with respect to the topology} $\tau_{S}$ if 
\[
Y=\bigcap\left\{ \mathrm{int}_{S}(U):Y\subseteq U\in\Clop(X)\right\} .
\]
\end{defn}

\begin{thm}
Let $\left\langle X,\tau,\tau_{S}\right\rangle $ be the dual space
of $\left<A,\prec\right>\in\mathrm{SubS4}$. Let $Y\subseteq X$ be
a $\tau$-closed of $\left\langle X,\tau\right\rangle $. Then the
following condition are equivalent 
\begin{enumerate}
\item $\triangleleft(x)=\left\{ y\in X:x\triangleleft y\right\} \subseteq Y$,
for each $x\in Y$. 
\item For every $U,V\in\Clop(X)$, if $\left(U,V\right)\in\theta(Y)$ ,
then $\mathrm{int}_{S}(U)\cap Y=\mathrm{int}_{S}(V)\cap Y$.
\item The relation $\theta(Y)$ satisfies the condition $\mathrm{SubCon}$. 
\item $Y$ \emph{saturated with respect to the topology} $\tau_{S}$.
\end{enumerate}
\end{thm}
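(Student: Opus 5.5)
The plan is to prove the chain $(1)\Leftrightarrow(4)\Leftrightarrow(2)\Leftrightarrow(3)$, working throughout with $U=\varphi(a)$ and the identity $\mathrm{int}_{S}(\varphi(a))=\beta({\downarrow}a)$ from Theorem \ref{prop:propierties of dual space of an alg}. First I simplify (4). Since $Y$ is $\tau$-closed, $Y=\bigcap\{U\in\Clop(X):Y\subseteq U\}$. Also $\mathrm{int}_{S}(U)\subseteq U$. Hence the inclusion $\bigcap\{\mathrm{int}_{S}(U):Y\subseteq U\}\subseteq Y$ always holds, and (4) reduces to: \emph{for every clopen $U\supseteq Y$ we have $Y\subseteq\mathrm{int}_{S}(U)$.} I will use this reformulation throughout.

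For $(4)\Rightarrow(1)$, suppose $x\in Y$, $x\triangleleft_{S}y$ and $y\notin Y$. Because $Y$ is closed, some clopen $U\supseteq Y$ misses $y$. By (4), $x\in\mathrm{int}_{S}(U)$, and then $x\triangleleft_{S}y$ forces $y\in U$, a contradiction.

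For $(1)\Rightarrow(4)$, let $x\in Y$ and $Y\subseteq U=\varphi(a)$, and suppose $x\notin\beta({\downarrow}a)$, i.e.\ ${\downarrow}a\cap x=\emptyset$. Then $a\notin{\uparrow}x$, since otherwise some $b\in x$ satisfies $b\prec a$. By Theorem \ref{teorema1} there is an ultrafilter $y$ with ${\uparrow}x\subseteq y$ and $a\notin y$. By the remark following Theorem \ref{prop:propierties of dual space of an alg}, ${\uparrow}x\subseteq y$ means $x\triangleleft_{S}y$. So (1) gives $y\in Y\subseteq\varphi(a)$, which contradicts $a\notin y$. This is the step where the algebraic prime-filter argument does the real work, and I expect it to be the main point to check carefully.

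For $(4)\Rightarrow(2)$, let $U\cap Y=V\cap Y$. Put $W=(U^{c}\cup V)\cap(V^{c}\cup U)$; this is clopen and contains $Y$, so $Y\subseteq\mathrm{int}_{S}(W)$ by (4). Since $\mathrm{int}_{S}$ preserves finite intersections and $U\cap W\subseteq V$, we get $\mathrm{int}_{S}(U)\cap Y\subseteq\mathrm{int}_{S}(U)\cap\mathrm{int}_{S}(W)\subseteq\mathrm{int}_{S}(V)$. The reverse inclusion follows by symmetry.

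For $(2)\Rightarrow(4)$, if $Y\subseteq U$ then $(U,X)\in\theta(Y)$. Hence $\mathrm{int}_{S}(U)\cap Y=\mathrm{int}_{S}(X)\cap Y=Y$.

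For $(3)\Rightarrow(2)$, let $(a,b)\in\theta(Y)$ and $x\in\beta({\downarrow}a)\cap Y$. Pick $c\in x$ with $c\prec a$. SubCon gives $d\prec b$ with $(c,d)\in\theta(Y)$. Since $x\in\varphi(c)\cap Y=\varphi(d)\cap Y$, we get $d\in x\cap{\downarrow}b$, i.e.\ $x\in\beta({\downarrow}b)$. Symmetry gives the other inclusion.

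For $(2)\Rightarrow(3)$, let $c\prec a$ and $(a,b)\in\theta(Y)$. Then
\[
\varphi(c)\cap Y\subseteq\beta({\downarrow}a)\cap Y=\beta({\downarrow}b)\cap Y=\bigcup\{\varphi(e):e\in{\downarrow}b\}\cap Y .
\]
The set $\varphi(c)\cap Y$ is $\tau$-compact and ${\downarrow}b$ is an ideal, so a single $e\prec b$ satisfies $\varphi(c)\cap Y\subseteq\varphi(e)$. Set $d=c\wedge e$. Then $d\leq e\prec b$ gives $d\prec b$ by (S4). Also $\varphi(d)\cap Y=\varphi(c)\cap Y$, so $(c,d)\in\theta(Y)$. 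This compactness step is the only other delicate point.
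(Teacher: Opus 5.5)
Your proof is correct, but it is organized differently from the paper's, which runs the single cycle $(1)\Rightarrow(2)\Rightarrow(3)\Rightarrow(4)\Rightarrow(1)$.

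Several pieces coincide with the paper:
\begin{itemize}
\item Your $(4)\Rightarrow(1)$ is the paper's argument: separate $y$ from $Y$ by a clopen.
\item Your $(2)\Rightarrow(3)$ is also the paper's: compactness of $\varphi(c)\cap Y$ against the ideal ${\downarrow}b$.
\item Your $(2)\Rightarrow(4)$ uses the same pair $(U,X)\in\theta(Y)$ that the paper feeds into the symmetric form of SubCon in its $(3)\Rightarrow(4)$.
\end{itemize}

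There are two genuine differences. First, the paper uses the ultrafilter-extension argument (in $\Clop(X)$, via ${\uparrow}_S\varepsilon(x)$) to go from (1) straight to (2). You aim the same argument at (4) instead, phrased in $A$ through Theorem \ref{teorema1} and the equivalence ${\uparrow}x\subseteq y\iff x\triangleleft_S y$. Second, you prove $(4)\Rightarrow(2)$ directly. You use the clopen $W=(U^{c}\cup V)\cap(V^{c}\cup U)\supseteq Y$ and the identity $\mathrm{int}_S(U)\cap\mathrm{int}_S(W)=\mathrm{int}_S(U\cap W)\subseteq\mathrm{int}_S(V)$. You also add a direct $(3)\Rightarrow(2)$.

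The paper's cycle is more economical: four implications against your six. Your arrangement reveals more structure:
\begin{itemize}
\item $(2)\Leftrightarrow(4)$ is pure topology. It uses only $\mathrm{int}_S(U)\subseteq U$, $\mathrm{int}_S(X)=X$ and the finite-intersection property of $\mathrm{int}_S$, so it holds for any $\tau$-closed $Y$ and any topology $\tau_S$.
\item $(2)\Leftrightarrow(3)$ needs only $\mathrm{int}_S(\varphi(a))=\beta({\downarrow}a)$ and $\tau$-compactness.
\item Theorem \ref{teorema1} is used only to connect (1) with the other three conditions.
\end{itemize}
In the paper, by contrast, $(4)\Rightarrow(2)$ and $(3)\Rightarrow(2)$ are obtained only by passing through (1), and hence through the ultrafilter step.
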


\begin{proof}
$\left(1\right)\Rightarrow\left(2\right)$ Let $U,V\in\Clop(X)$ such
that $\left(U,V\right)\in\theta(Y)$. Let $x\in\mathrm{int}_{S}(U)\cap Y$.
Suppose that $x\notin\mathrm{int}_{S}(V)$. Then for all $W\in\Clop(X)$,
and $W\subseteq\mathrm{int}_{S}(V)$, $x\notin W$. Thus, ${\downarrow}_{S}V\cap\varepsilon(x)=\emptyset$,
i.e., $V\notin\uparrow_{S}\varepsilon(x)$. Then there exists $P\in\Ul(\Clop(X))$
such that $\uparrow_{S}\varepsilon(x)\subseteq P$ and $V\notin P$.
Since $\varepsilon$ is surjective, there exists $y\in X$ such that
$P=\varepsilon(y)$, $x\triangleleft y$ and $y\notin V$. But as
$\triangleleft(x)\subseteq Y$, we get $y\in Y$. Since $x\triangleleft y$
and $x\in\mathrm{int}_{S}(U)$, we get $y\in\mathrm{int}_{S}(U)$$\subseteq U$.
Thus, $y\in U\cap Y=V\cap Y$, but this implies that $y\in V$, which
is a contradiction. Thus, $\mathrm{int}_{S}(U)\cap Y\subseteq\mathrm{int}_{S}(V)\cap Y$.
The proof of the other inclusion is similar.

$\left(2\right)\Rightarrow\left(3\right)$ Let $U,V\in\Clop(X)$ such
that $\left(U,V\right)\in\theta(Y)$. Let $W\prec_{S}U$. Then 
\[
W\cap Y\subseteq\mathrm{int}_{S}(U)\cap Y=\mathrm{int}_{S}(V)\cap Y=\bigcup\left\{ H_{i}\in\Clop(X):H_{i}\subseteq\mathrm{int}_{S}(V)\right\} \cap Y.
\]
Then 
\[
W\cap Y\subseteq\bigcup\left\{ H_{i}\in\Clop(X):H_{i}\subseteq\mathrm{int}_{S}(V)\right\} ,
\]
 and as $W\cap Y$ is closed, by compactness, there are $H_{1},\ldots,H_{n}\in\Clop(X)$
such that $W\cap Y\subseteq H_{1}\cup\ldots\cup H_{n}\subseteq\mathrm{int}_{S}(V)$.
Let $H=H_{1}\cup\ldots\cup H_{n}$. So, $W\cap Y=W\cap H\cap Y$,
and $W\cap H\subseteq H\prec_{S}V$. Thus there exists $K=W\cap H\in\Clop(X)$
such that $K\prec_{S}V$ and $\left(W,K\right)\in\theta(Y)$.

$\left(3\right)\Rightarrow\left(4\right)$ Let $Y\subseteq U$. We
need to prove that $Y\subseteq\mathrm{int}_{S}(U)$. As $\left(U,X\right)\in\theta(Y)$,
and $X\prec_{S}X$, there exists $V\subseteq\mathrm{int}_{S}(U)$
such that $\left(V,X\right)\in\theta(Y)$. So, $Y\cap V=Y\cap X=Y$,
i.e., $Y\subseteq V\subseteq\mathrm{int}_{S}(U)$. Thus, $Y\subseteq\bigcap\left\{ \mathrm{int}_{S}(U):Y\subseteq U\in\Clop(X)\right\} $.
Let $x\in\bigcap\left\{ \mathrm{int}_{S}(U):Y\subseteq U\in\Clop(X)\right\} $.
If $x\notin Y$, there exists $U\in\Clop(X)$ such that $Y\subseteq U$
and $x\notin U$. But by hypothesis, $x\in\mathrm{int}_{S}(U)\subseteq U$,
which ia a contradiction. Thus, $Y=\bigcap\left\{ \mathrm{int}_{S}(U):Y\subseteq U\in\Clop(X)\right\} $. 

$\left(4\right)\Rightarrow\left(1\right)$ Let $x,y\in X$ and $y\in\triangleleft(x)$.
Suppose that $y\notin Y$. As $Y$ is a $\tau$-closed, there exists
$U\in\Clop(X)$ such that $Y\subseteq U$ and $y\notin U$. So, $Y\subseteq\mathrm{int}_{S}(U)$
and thus $y\notin\mathrm{int}_{S}(U)$. As $x\in\mathrm{int}_{S}(U)$
and $y\notin\mathrm{int}_{S}(U)$, we get that $x\ntriangleleft y$,
which is a contradiction. Therefore, $y\in Y$. 
\end{proof}

\begin{cor}
Let $\left\langle X,\tau,\tau_{S}\right\rangle $ be a bitopological
subordination space. Then the lattice of $\prec$\emph{-congruences
of $\left(\Clop(X),\prec_{S}\right)$ is isomorphic to the lattice
of subsets $X$ such that are closed subsets of the Stone space $\left(X,\tau\right)$
and saturated with respect to the topology} $\tau_{S}$.
\end{cor}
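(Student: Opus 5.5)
The plan is to combine the classical Stone correspondence between Boolean congruences of $\Clop(X)$ and $\tau$-closed subsets of $X$ with the preceding theorem, which singles out the $\prec$-congruences among the Boolean ones. First, by Theorem \ref{lem:caracterizacion order-1} we may regard $\left<X,\tau,\tau_{S}\right>$ as the dual space of $\left<\Clop(X),\prec_{S}\right>$. Under this identification $\varphi(U)$ corresponds to $U$, so for a $\tau$-closed $Y$ the congruence is
\[
\theta(Y)=\left\{ (U,V):U\cap Y=V\cap Y\right\}.
\]

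Next I recall the Boolean facts. The map $Y\mapsto\theta(Y)$ is a bijection from $\tau$-closed sets onto congruences of $\Clop(X)$. Its inverse sends $\theta$ to $Y_{\theta}=\bigcap\{U:(U,X)\in\theta\}$, i.e.\ to $\alpha$ of the filter $1/\theta$. The map is order-reversing: $Y\subseteq Z$ iff $\theta(Z)\subseteq\theta(Y)$. The only nontrivial check is that $\theta(Y)$ determines $Y$. If $x\notin Y$, compactness gives a clopen $U\supseteq Y$ with $x\notin U$, and then $(U,X)\in\theta(Y)$. Hence $Y=\bigcap\{U\in\Clop(X):(U,X)\in\theta(Y)\}$. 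Surjectivity holds because every congruence of a Boolean algebra is determined by its filter $1/\theta$, and that filter equals $\{U:Y\subseteq U\}$ for $Y=\alpha(1/\theta)$, using that $\alpha$ and filters are in duality.

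Then I restrict the bijection. By the equivalence $(3)\Leftrightarrow(4)$ of the preceding theorem, for a $\tau$-closed $Y$, $\theta(Y)$ satisfies SubCon iff $Y$ is saturated with respect to $\tau_{S}$. Since every congruence has the form $\theta(Y)$, the restriction is a bijection between $\prec$-congruences and the $\tau$-closed, $\tau_{S}$-saturated sets. Because it is an order anti-isomorphism between posets, both posets are lattices and lattice operations are preserved in dual form. So the isomorphism in the statement is to be read as a dual (anti-)isomorphism, or equivalently as an isomorphism with the reverse inclusion order on the sets. Alternatively, composing with complementation gives a genuine isomorphism onto the corresponding family of $\tau$-open sets.

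The main obstacle is exactly this order direction, together with checking that the restricted families really are lattices. The intersection of two saturated closed sets is again saturated closed: condition (1) of the preceding theorem, $\triangleleft(x)\subseteq Y$, is clearly preserved under arbitrary intersections. Joins, however, need not be unions. I would define the join of $Y$ and $Z$ as the smallest closed saturated set containing $Y\cup Z$, namely the intersection of all closed saturated supersets. This exists because $X$ itself is closed and saturated, which follows from $X\prec_{S}X$. With joins defined this way, the order-reversing bijection automatically transports meets and joins.
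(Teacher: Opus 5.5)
Your proof is correct and is essentially the argument the paper leaves implicit: identify $X$ with the dual of $\langle\Clop(X),\prec_S\rangle$ via $\varepsilon$, take the classical correspondence $Y\mapsto\theta(Y)$ between $\tau$-closed sets and Boolean congruences, and restrict it using the equivalence $(3)\Leftrightarrow(4)$ of the preceding theorem. You are also right that this correspondence is order-reversing, so the statement should be read as a dual isomorphism. One small over-caution: binary joins of closed saturated sets are in fact just unions, because finite unions of closed sets are closed and condition (1) is preserved under unions; only infinitary joins need your intersection-of-closed-saturated-supersets description.
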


\section{Categorical dualities I}\label{sec:Categorical-dualities-I}

In this section we study morphisms between S4-subordination algebras
based in Boolean homomorphisms. In the next section we will study
morphism but based in meet-homomorphisms. 

We recall that a Boolean homomorphism $h:A_{1}\rightarrow A_{2}$
between modal algebras is a weak modal homomorphism if $h(\square a)\leq\square h(a)$,
for all $a\in A_{1}$, and $h$ is a modal homomorphism if $h(\square a)=\square h(a)$,
for all $a\in A_{1}$. We introduce adequate generalization of the
notion of weak modal homomorphism and modal homomorphism. 
\begin{defn}
\label{def:sub homo}Let $A_{1}=\left<A_{1},\prec_{1}\right>$ and
$A_{2}=\left<A_{2},\prec_{2}\right>$ be two subordination algebras. 
\begin{enumerate}
\item A \emph{subordination} \emph{homomorphism} from $A_{1}$ to $A_{2}$
is a Boolean homomorphism $h:A_{1}\rightarrow A_{2}$ such that for
all $a,b\in A_{1},$ if $a\prec_{1}b$, then $h(a)\prec_{2}h(b)$. 
\item A \emph{strong subordination homomorphism} from $A_{1}$ to $A_{2}$
is a subordination homomorphism $h:A_{1}\rightarrow A_{2}$ satisfying
the additional condition
\begin{enumerate}
\item[(QH)]  If $a\prec_{2}h(b)$, then there exists $c\in A_{1}$ such that
$a\leq h(c)$ and $c\prec_{1}b$.
\end{enumerate}
\end{enumerate}
A \emph{subordination} \emph{isomorphism} is a bijective subordination
homomorphism $h:A_{1}\rightarrow A_{2}$ such that if $h(a)\prec_{2}h(b)$
implies that $a\prec_{1}b$, for $a,b\in A$.
\end{defn}

We note that a Boolean homomorphism $h:A_{1}\rightarrow A_{2}$ is
a subordination homomorphism iff $h\left[{\downarrow}a\right]\subseteq{\downarrow}h(a)$,
for each $a\in A_{1}$. A subordination homomorphism $h:A_{1}\rightarrow A_{2}$
is a strong subordination iff ${\downarrow}h(a)\subseteq\left(h\left[{\downarrow}a\right]\right]$,
for each $a\in A_{1}$. Thus, a Boolean homomorphism $h:A_{1}\rightarrow A_{2}$
is a strong subordination homomorphism iff $\left(h\left[{\downarrow}a\right]\right]={\downarrow}h(a)$,
for each $a\in A_{1}$. We note that the strong subordination homomorphisms
are called qm-homomorphism in \cite{Celani QUasi modal}.

Let $\left<X_{1},\tau_{1},\tau_{S_{1}}\right>$ and $\left<X_{2},\tau_{2},\tau_{S_{2}}\right>$
be two $S4$-subordination spaces. A function is $\tau$-continuous
($\tau_{S}$-continuous) if it is a continuous function $f:X_{1}\rightarrow X_{2}$
between the spaces $\left<X_{1},\tau_{1}\right>$ and $\left<X_{2},\tau_{2}\right>$
(between the spaces $\left<X_{1},\tau_{S_{1}}\right>$ and $\left<X_{2},\tau_{S_{2}}\right>$).
Now we shall study when a $\tau$-continuous function $f:X_{1}\rightarrow X_{2}$
is $\tau_{S}$-continuous. 

Let $\left<X_{1},\tau_{1}\right>$ and $\left<X_{2},\tau_{2}\right>$
be Stone spaces. We recall that a function $f:X_{1}\rightarrow X_{2}$
is a $\tau$-continuous iff the map $f^{*}:\mathrm{Clop}(X_{2})\rightarrow\mathrm{Clop}(X_{1})$
given by $f^{*}(U)=f^{-1}(U),$ for each $U\in\mathrm{Clop}(X_{2})$
is a Boolean homomorphism. 

In the next result we are going to characterize the continuous functions
that are dual to the subordination homomorphism.
\begin{prop}
\label{prop:f stable}Let $f:X_{1}\rightarrow X_{2}$ be a $\tau$-continuous
function between the $S4$-subordination spaces $\left<X_{1},\tau_{1},\tau_{S_{1}}\right>$
and $\left<X_{2},\tau_{2},\tau_{S_{2}}\right>$. The following conditions
are equivalent:
\begin{enumerate}
\item For all $x,y\in X_{1},$ if $x\triangleleft_{1}y$ , then $f(x)\triangleleft_{2}f(y)$,
i.e. $f$ is stable. 
\item $f^{-1}\mathrm{(\mathrm{int}_{S_{2}}}(U))\subseteq\mathrm{int}_{S_{1}}(f^{-1}(U))$,
for each $U\in\mathrm{Clop}(X_{2})$. 
\item $f^{*}$ is a subordination homomorphism.
\end{enumerate}
\end{prop}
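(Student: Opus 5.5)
I plan to prove the cycle $(2)\Leftrightarrow(3)$ directly and then $(1)\Leftrightarrow(2)$. The main tools are the description of $\mathrm{int}_{S}$ on clopens as a union of clopens $W\subseteq\mathrm{int}_{S}(U)$ (by (Sp3) and the clopen base), compactness, and the characterization of $\triangleleft$ through $\uparrow_{S}\varepsilon(x)$ together with Theorem \ref{teorema1}.

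\emph{$(2)\Rightarrow(3)$.} Let $V\prec_{S_2}U$, i.e.\ $V\subseteq\mathrm{int}_{S_2}(U)$. Then $f^{-1}(V)\subseteq f^{-1}(\mathrm{int}_{S_2}(U))\subseteq\mathrm{int}_{S_1}(f^{-1}(U))$, so $f^{*}(V)\prec_{S_1}f^{*}(U)$.

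\emph{$(3)\Rightarrow(2)$.} Let $x\in f^{-1}(\mathrm{int}_{S_2}(U))$. Since $\mathrm{int}_{S_2}(U)$ is $\tau_2$-open by (Sp3) and the clopens form a base, there is a clopen $W$ with $f(x)\in W\subseteq\mathrm{int}_{S_2}(U)$. Thus $W\prec_{S_2}U$, and by (3) $x\in f^{-1}(W)\subseteq\mathrm{int}_{S_1}(f^{-1}(U))$.

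\emph{$(2)\Rightarrow(1)$.} Let $x\triangleleft_{1}y$ and $f(x)\in\mathrm{int}_{S_2}(U)$. By (2), $x\in\mathrm{int}_{S_1}(f^{-1}(U))$, and since $f^{-1}(U)$ is clopen the definition of $\triangleleft_1$ gives $y\in f^{-1}(U)$. Hence $f(y)\in U$, which shows $f(x)\triangleleft_2 f(y)$.

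\emph{$(1)\Rightarrow(2)$, the main obstacle.} It requires producing a point from an ultrafilter. Suppose $x\in f^{-1}(\mathrm{int}_{S_2}(U))$ but $x\notin\mathrm{int}_{S_1}(f^{-1}(U))$. Write $V=f^{-1}(U)$. Since $\mathrm{int}_{S_1}(V)$ is the union of the clopens $W$ with $W\prec_{S_1}V$, no such $W$ contains $x$, so $V\notin\uparrow_{S_1}\varepsilon(x)$. The set $\uparrow_{S_1}\varepsilon(x)$ is a filter, so by Theorem \ref{teorema1} (applied in $\Clop(X_1)$) there is an ultrafilter containing it and omitting $V$. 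By Stone duality this ultrafilter is $\varepsilon(y)$ for some $y\in X_1$, and by the remark characterizing $\triangleleft_S$ we get $x\triangleleft_1 y$ with $y\notin V$. By (1), $f(x)\triangleleft_2 f(y)$. Since $f(x)\in\mathrm{int}_{S_2}(U)$, it follows that $f(y)\in U$, i.e.\ $y\in V$, a contradiction.

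The only care needed is to pass correctly between the point relation $\triangleleft$ and $\uparrow_S\varepsilon(x)\subseteq\varepsilon(y)$, which is exactly identity (\ref{eq:homeomorfismo}).
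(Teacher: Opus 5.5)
Your proof is correct and follows essentially the same route as the paper: the same direct arguments for $(2)\Leftrightarrow(3)$, and for $(1)\Rightarrow(2)$ the same extension of $\uparrow_{S_1}\varepsilon_1(x)$ to an ultrafilter $\varepsilon_1(y)$ omitting $f^{-1}(U)$, followed by an appeal to stability. Your direct proof of $(2)\Rightarrow(1)$ is the contrapositive of the paper's argument. Because you use the definition of $\triangleleft_1$ to conclude $y\in f^{-1}(U)$, you also avoid an imprecision in the paper: it picks a witness with $f(y)\notin\mathrm{int}_{S_2}(U)$, whereas the step it then takes requires $f(y)\notin U$.
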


\begin{proof}
$\left(1\right)\Rightarrow\left(2\right)$. Let $f(x)\in\mathrm{\mathrm{int}_{S_{2}}}(U)$,
for some $U\in\mathrm{Clop}(X_{2})$. Suppose that $x\notin\mathrm{int}_{S_{1}}(f^{-1}(U))$.
Then it is easy to see that ${\downarrow}_{S_{1}}f^{-1}(U)\cap\varepsilon_{1}(x)=\emptyset$.
So there exists $P\in\Ul(\mathrm{Clop}(X_{1}))$ such that $\uparrow_{S_{1}}\varepsilon_{1}(x)\subseteq P$
and $f^{-1}(U)\notin P$. As $\left<X_{1},\tau_{1}\right>$ is a Stone
space, there exists $y\in X_{1}$ such that $P=\varepsilon_{1}(y)$.
Then $\uparrow_{S_{1}}\varepsilon_{1}(x)\subseteq\varepsilon_{1}(y)$.
By \ref{eq:homeomorfismo}, $x\triangleleft_{S_{1}}y$, and as $f$
is stable, $f(x)\triangleleft_{2}f(y)$. Since $f(x)\in\mathrm{\mathrm{int}_{S_{2}}}(U)$,
we get $f(y)\in\mathrm{int}_{S_{2}}(U)\subseteq U$, which is a contradiction.
Thus, $f^{-1}\mathrm{(\mathrm{int}_{S_{2}}}(U))\subseteq\mathrm{int}_{S_{1}}(f^{-1}(U))$. 

$\left(2\right)\Rightarrow\left(1\right)$ Let $x,y\in X_{1}$. Assume
that $f(x)\ntriangleleft_{2}f(y)$. Then there exists $U\in\mathrm{Clop}(X_{2})$
such that $f(x)\in\mathrm{int}_{S_{2}}(U)$ and $f(y)\notin\mathrm{int}_{S_{2}}(U)$.
Then $x\in f^{-1}(\mathrm{int}_{S_{2}}(U))\subseteq\mathrm{int}_{S_{1}}(f^{-1}(U))$
and $y\notin\mathrm{int}_{S_{1}}(f^{-1}(U))$. Thus, $x\ntriangleleft_{1}y$. 

We prove $\left(2\right)\Rightarrow\left(3\right).$ Let $U,V\in\mathrm{Clop}(X_{2})$.
Suppose that $U\prec_{S_{2}}V$, i.e., $U\subseteq\mathrm{int}_{S_{2}}(V)$.
Then $f^{-1}(U)\subseteq f^{-1}(\mathrm{int}_{S_{2}}(V))\subseteq\mathrm{int}_{S_{1}}(f^{-1}(V)),$
i.e., $f^{-1}(U)\prec_{S_{1}}f^{-1}(V)$.

$\left(3\right)\Rightarrow\left(2\right)$ Let $U\in\mathrm{Clop}(X_{2})$.
Let $x\in f^{-1}\mathrm{(\mathrm{int}_{S_{2}}}(U))$. So, $f(x)\in\mathrm{\mathrm{int}_{S_{2}}}(U)$,
i.e., there exists $W\in\Clop(X_{2})$ such that $W\prec_{S_{2}}U$
and $f(x)\in W$. Then $f^{-1}(W)\prec_{S_{1}}f^{-1}(U),$ i.e., $f^{-1}(W)\subseteq\mathrm{int}_{S_{1}}(f^{-1}(U))$.
Thus, $x\in\mathrm{int}_{S_{1}}(f^{-1}(U))$.
\end{proof}

Now we will study the dual morphisms of strong subordination homomorphisms. 

Let $\left<X,\tau,\tau_{S}\right>$ be a $\mathrm{S4}$-subordination
space. We recall that $\leq_{S}=\triangleleft_{S}$, and that $\leq_{S}$
is the specialization order of the space $\left<X,\tau_{S}\right>$.
Then for each $U\subseteq X$ we consider the downset generated by
$U$ in the poset $\left<X,\triangleleft_{S}\right>$, i.e. $\left(U\right]_{\triangleleft_{S}}=\left\{ y\in X:\exists z\in U\,(y\triangleleft_{S}z)\right\} $.

The following proposition is possibly known, but we give a proof for
completeness
\begin{prop}
\label{prop:closure  increasing-1}Let $\left<X,\tau,\tau_{S}\right>$
be a $\mathrm{S4}$-subordination space. Then $\mathrm{cl}_{S}(U)=\left(U\right]_{\triangleleft}$,
for each $U\in\Clop(X)$.
\end{prop}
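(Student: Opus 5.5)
The plan is to prove the two inclusions separately, using that in an $\mathrm{S4}$-subordination space $\triangleleft_{S}=\leq_{S}$ (by $\mathrm{(Sp4)}$ and $\leq_{S}\subseteq\triangleleft_{S}$), that $\leq_{S}$ is the specialization order of $\left<X,\tau_{S}\right>$, and the description (\ref{eq:homeomorfismo}) of $\triangleleft_{S}$ through the filter ${\uparrow}_{S}\varepsilon(x)$. Throughout I use that, since $S$ is a base of $\tau_{S}$, a point $y$ lies in $\mathrm{cl}_{S}(U)$ iff every $\mathrm{int}_{S}(V)$ with $V\in\Clop(X)$ and $y\in\mathrm{int}_{S}(V)$ meets $U$.

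For $(U]_{\triangleleft}\subseteq\mathrm{cl}_{S}(U)$: let $y\triangleleft_{S}z$ with $z\in U$. Then $y\leq_{S}z$, so every basic open $\mathrm{int}_{S}(V)$ containing $y$ also contains $z$, and hence meets $U$. Therefore $y\in\mathrm{cl}_{S}(U)$. This direction in fact works for any $U\subseteq X$.

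For $\mathrm{cl}_{S}(U)\subseteq(U]_{\triangleleft}$: fix $y\in\mathrm{cl}_{S}(U)$. I want $z\in U$ with ${\uparrow}_{S}\varepsilon(y)\subseteq\varepsilon(z)$. By the Remark, ${\uparrow}_{S}\varepsilon(y)$ is a filter of $\Clop(X)$. I claim that $V\cap U\neq\emptyset$ for every $V\in{\uparrow}_{S}\varepsilon(y)$. Indeed, such a $V$ comes with some $W\in\Clop(X)$ satisfying $y\in W\subseteq\mathrm{int}_{S}(V)$. Thus $\mathrm{int}_{S}(V)$ is a basic $\tau_{S}$-neighbourhood of $y$, so it meets $U$, and therefore $V\supseteq\mathrm{int}_{S}(V)$ meets $U$. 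Because ${\uparrow}_{S}\varepsilon(y)$ is closed under finite meets, the family ${\uparrow}_{S}\varepsilon(y)\cup\{U\}$ has the finite intersection property. It therefore extends to an ultrafilter $P$ of $\Clop(X)$. By Stone duality $P=\varepsilon(z)$ for some $z\in X$. Then $z\in U$, since $U\in P$, and ${\uparrow}_{S}\varepsilon(y)\subseteq\varepsilon(z)$, which by (\ref{eq:homeomorfismo}) gives $y\triangleleft_{S}z$. Hence $y\in(U]_{\triangleleft}$.

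The only delicate point is this second inclusion. There one must turn the topological closure condition into the finite intersection property of ${\uparrow}_{S}\varepsilon(y)\cup\{U\}$, and this is exactly where clopenness of $U$ is used, so that $U$ can be adjoined to a filter of $\Clop(X)$. After that step, the conclusion is the ultrafilter extension combined with (\ref{eq:homeomorfismo}).
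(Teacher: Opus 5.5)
Your proposal is correct and follows essentially the paper's proof: you get the easy inclusion from $\triangleleft_{S}\subseteq\leq_{S}$, and the hard inclusion by producing $z\in U$ with ${\uparrow}_{S}\varepsilon(y)\subseteq\varepsilon(z)$. For that step you use that the family ${\uparrow}_{S}\varepsilon(y)$ (which is the paper's $F_{x}$) is closed under finite intersections and that each of its members meets $U$. The only difference is cosmetic: the paper finds the point $z$ by $\tau$-compactness of $U$, while you extend ${\uparrow}_{S}\varepsilon(y)\cup\{U\}$ to an ultrafilter and apply Stone duality, which is the same argument in dual form.
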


\begin{proof}
Let $U\in\Clop(X)$. For each $x\in\mathrm{cl}_{S}(U)$ we take the
set 
\[
F_{x}=\left\{ V\in\Clop(X):x\in\mathrm{int}_{S}(V)\right\} .
\]
We prove that $\bigcap F_{x}\cap U\neq\emptyset$. Otherwise, $U\subseteq\bigcup\left\{ V^{c}:V\in F_{x}\right\} $,
and as $U^{c}$ is compact, there exists a finite subset $\left\{ V_{1},\ldots,V_{n}\right\} \subseteq F_{x}$
such that $U\cap V_{1}\cap\ldots\cap V_{n}=\emptyset$. So, $V_{1}\cap\ldots\cap V_{n}\subseteq U^{c}$,
and thus 
\[
\mathrm{int}_{S}(V_{1}\cap\ldots\cap V_{n})=\mathrm{int}_{S}(V_{1})\cap\ldots\cap\mathrm{int}_{S}(V_{n})\subseteq\mathrm{int}_{S}(U^{c})=\mathrm{cl}_{S}(U)^{c}.
\]
As $x\in\mathrm{int}_{S}(V_{1})\cap\ldots\cap\mathrm{int}_{S}(V_{n})$,
we get $x\in\mathrm{cl}_{S}(U)^{c}$, which is impossible. Thus, there
exists $y\in\bigcap F_{x}\cap U$. Now we prove that $\uparrow_{S}(\varepsilon(x))\subseteq\varepsilon(y)$$.$
Let $U\in\uparrow_{S}(\varepsilon(x))$, i.e., ${\downarrow}_{S}(U)\cap\varepsilon(x)\neq\emptyset$.
So there exists $V\in\Clop(X)$ such that $V\subseteq\mathrm{int}_{S}(U)$
and $x\in V$. Then, $U\in F_{x}$, and thus $y\in U$. So, $\uparrow_{S}(\varepsilon(x))\subseteq\varepsilon(y)$,
i.e., $x\triangleleft_{S}y$. As $y\in U$, we have that $x\in\left(U\right]_{\triangleleft_{S}}$.
Thus, $\mathrm{cl}_{S}(U)\subseteq\left(U\right]_{\triangleleft_{S}}$.

Let $x\in\left(U\right]_{\triangleleft_{S}}.$ Then there exists $y\in X$
such that $x\triangleleft_{S}y$ and $y\in U$. If $x\notin\mathrm{cl}_{S}(U)$,
as the family $S$ is a base of $\tau_{S}$, exists $V\in\Clop(X)$
such that $\mathrm{cl}_{S}(U)\subseteq\mathrm{int}_{S}(V)^{c}$ and
$x\in\mathrm{int}_{S}(V)$. Since $x\triangleleft_{S}y$, we get $y\in\mathrm{int}_{S}(V)$,
but as $y\in U\subseteq\mathrm{cl}_{S}(U)\subseteq\mathrm{int}_{S}(V)^{c}$,
we have $y\in\mathrm{int}_{S}(V)^{c}$, which is a contradiction.
Therefore, $\left(U\right]_{\triangleleft_{_{S}}}\subseteq\mathrm{cl}_{S}(U)$.
\end{proof}

\begin{defn}
Let $f:X_{1}\rightarrow X_{2}$ be a $\tau$-continuous function between
two $S4$-subordination spaces $\left<X_{1},\tau_{1},\tau_{S_{1}}\right>$.
We shall say that $f$ is $\triangleleft$\emph{-stable}, or \emph{stable},
if $f$ satisfies any of the conditions of Proposition \ref{prop:f stable}.
\end{defn}

\begin{prop}
\label{prop: f propiedad (PM)}Let $f:X_{1}\rightarrow X_{2}$ a $\tau$-continuous
stable function between the $\mathrm{S4}$-subordination spaces $\left<X_{1},\tau_{1},\tau_{S_{1}}\right>$
and $\left<X_{2},\tau_{2},\tau_{S_{2}}\right>$. Then the following
condition are equivalent
\begin{enumerate}
\item $f^{*}$ satisfies the property (QH).
\item If $f(x)\triangleleft_{S_{2}}y$, then there exists $z\in X_{1}$
such that $x\triangleleft_{S_{1}}z$ and $f(z)=y$.
\item $\mathrm{int}_{S_{1}}(f^{-1}(U))\subseteq f^{-1}(\mathrm{int}_{S_{2}}(U))$,
for all $U\in\mathrm{\mathrm{Clop}}(X_{2})$.
\end{enumerate}
\end{prop}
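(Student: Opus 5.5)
I plan to prove the cycle $(1)\Rightarrow(3)\Rightarrow(2)\Rightarrow(1)$, using throughout that for $U\in\Clop(X)$ the open set $\mathrm{int}_{S}(U)$ is the union of the clopens $W$ with $W\prec_{S}U$. This holds because $\mathrm{int}_{S}(U)\in\tau$ by (Sp3) and $\Clop(X)$ is a base. I will also use the description of $\triangleleft_{S}$ through $\varepsilon$ and ${\uparrow}_{S}$ from (\ref{eq:homeomorfismo}), together with the compactness argument already used in Proposition \ref{prop:closure  increasing-1}.

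\textbf{(1)$\Rightarrow$(3).} Take $x\in\mathrm{int}_{S_{1}}(f^{-1}(U))$. Choose $W\in\Clop(X_{1})$ with $x\in W\prec_{S_{1}}f^{-1}(U)=f^{*}(U)$. By (QH) there is $V\in\Clop(X_{2})$ with $W\subseteq f^{-1}(V)$ and $V\prec_{S_{2}}U$. Then $f(x)\in V\subseteq\mathrm{int}_{S_{2}}(U)$, so $x\in f^{-1}(\mathrm{int}_{S_{2}}(U))$.

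\textbf{(3)$\Rightarrow$(2).} Assume $f(x)\triangleleft_{S_{2}}y$ and suppose no $z$ with $x\triangleleft_{S_{1}}z$ satisfies $f(z)=y$. The set $\triangleleft_{S_{1}}(x)=\bigcap\{V\in\Clop(X_{1}):x\in\mathrm{int}_{S_{1}}(V)\}$ is $\tau_{1}$-closed, hence compact, so its image $f[\triangleleft_{S_{1}}(x)]$ is compact and does not contain $y$. I first separate: there is $U\in\Clop(X_{2})$ with $f[\triangleleft_{S_{1}}(x)]\subseteq U$ and $y\notin U$. Then $\triangleleft_{S_{1}}(x)\subseteq f^{-1}(U)$, and a compactness argument over the family $F_{x}=\{V:x\in\mathrm{int}_{S_{1}}(V)\}$ gives finitely many $V_{1},\ldots,V_{n}\in F_{x}$ with $V_{1}\cap\dots\cap V_{n}\subseteq f^{-1}(U)$. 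This family is closed under finite intersections because $\mathrm{int}_{S}$ commutes with finite meets. Hence $x\in\mathrm{int}_{S_{1}}(f^{-1}(U))\subseteq f^{-1}(\mathrm{int}_{S_{2}}(U))$ by (3). So $f(x)\in\mathrm{int}_{S_{2}}(U)$, and $f(x)\triangleleft_{S_{2}}y$ forces $y\in U$, a contradiction. The step I expect to be the main obstacle is this one: checking the equality $\triangleleft_{S_{1}}(x)=\bigcap F_{x}$ and running the compactness cleanly. The equality is just the definition of $\triangleleft_{S}$ read pointwise.

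\textbf{(2)$\Rightarrow$(1).} Let $W\prec_{S_{1}}f^{-1}(U)$, and consider the ideal $I=\{V\in\Clop(X_{2}):V\prec_{S_{2}}U\}={\downarrow}_{S_{2}}U$. It suffices to show $W\subseteq f^{-1}(\beta(I))=f^{-1}(\mathrm{int}_{S_{2}}(U))$. Then compactness of $W$ gives finitely many members of $I$ whose union $V$ lies in $I$ and satisfies $W\subseteq f^{-1}(V)$, which is exactly (QH). For $x\in W$ we have $x\in\mathrm{int}_{S_{1}}(f^{-1}(U))$. If $f(x)\notin\mathrm{int}_{S_{2}}(U)$, then as in the proof of Proposition \ref{prop:f stable} (via Theorem \ref{teorema1} and Stone duality) there is $y$ with $f(x)\triangleleft_{S_{2}}y$ and $y\notin U$. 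By (2) there is $z$ with $x\triangleleft_{S_{1}}z$ and $f(z)=y$. Since $x\in\mathrm{int}_{S_{1}}(f^{-1}(U))$, we get $z\in f^{-1}(U)$, i.e.\ $y\in U$, a contradiction.
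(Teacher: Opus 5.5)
Your proof is correct, but you run the cycle in the opposite direction from the paper, which proves $(1)\Rightarrow(2)\Rightarrow(3)\Rightarrow(1)$.

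The paper's three steps are as follows. Its $(1)\Rightarrow(2)$ is a single compactness argument in $X_1$ on the family $\{V: x\in\mathrm{int}_{S_1}(V)\}\cup\{f^{-1}(U): y\in U\in\Clop(X_2)\}$. Inside that argument, (QH) is applied to a clopen $W\ni x$ with $W\prec_{S_1}f^{-1}(U^c)$ to force $y\in U^c$. Its $(2)\Rightarrow(3)$ obtains a $\triangleleft_{S_2}$-successor of $f(x)$ outside $U$ from the identity $\mathrm{cl}_{S}(U)=(U]_{\triangleleft_{S}}$. Its $(3)\Rightarrow(1)$ is exactly the compactness step that closes your $(2)\Rightarrow(1)$.

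You instead factor the paper's $(1)\Rightarrow(2)$ through $(3)$:
\begin{itemize}
\item Your $(1)\Rightarrow(3)$ isolates the use of (QH) as a three-line local argument via (Sp3).
\item Your $(3)\Rightarrow(2)$ is the same compactness argument with (3) in place of (QH). It is preceded by a clopen separation of $y$ from the compact set $f[\triangleleft_{S_1}(x)]$ in $X_2$. The paper avoids that separation by working with $f^{-1}(\{y\})=\bigcap\{f^{-1}(U):y\in U\}$ directly in $X_1$.
\item Your $(2)\Rightarrow(1)$ merges the paper's last two implications. You produce the successor by extending ${\uparrow}_{S_2}\varepsilon_2(f(x))$ to an ultrafilter rather than by the closure identity; these are the same fact.
\end{itemize}

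What your ordering gains is modularity. It shows that $(1)\Leftrightarrow(3)$ is a purely base-level translation. It also places the point-lifting content in $(3)\Rightarrow(2)$, where (QH) no longer appears. In both versions the stability of $f$ is never actually used.
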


\begin{proof}
$\left(1\right)\Rightarrow\left(2\right)$ Let $x\in X_{1}$ and $y\in X_{2}$,
such that $f(x)\triangleleft_{S_{2}}y$, i.e., $\uparrow_{S_{2}}(\varepsilon_{2}(f(x)))\subseteq\varepsilon_{2}(y)$.
We prove that 
\[
\bigcap\left\{ V:{\downarrow}_{S_{1}}V\cap\varepsilon_{1}(x)\neq\emptyset\right\} \cap\bigcap\left\{ f^{-1}(U):y\in U\in\Clop(X_{2})\right\} \neq\emptyset.
\]
Otherwise, by compacity, there exists a finite set $\left\{ V_{1},\ldots,V_{n},U_{1},\ldots,U_{k}\right\} $
such that ${\downarrow}_{S_{1}}(V_{1}\cap\ldots\cap V_{n})\cap\varepsilon_{1}(x)\neq\emptyset$,
$y\in U_{1}\cap\ldots\cap U$, and 
\[
V_{1}\cap\ldots\cap V_{n}\cap f^{-1}(U_{1})\cap\ldots\cap f^{-1}(U_{k})=\emptyset.
\]
Then $V=V_{1}\cap\ldots\cap V_{n}\subseteq f^{-1}(U^{c}_{1}\cup\ldots\cup U^{c}_{k})=f^{-1}(U^{c})$,
where $U=U_{1}\cap\ldots\cap U_{k}$. This implies that ${\downarrow}_{S_{1}}(V)\subseteq{\downarrow}_{S_{1}}(f^{-1}(U^{c}))$,
and so ${\downarrow}_{S_{1}}(f^{-1}(U^{c}))\cap\varepsilon_{1}(x)\neq\emptyset$.
Then there exists $W\in\varepsilon_{1}(x)$ such that $W\subseteq\mathrm{int}_{S_{1}}(f^{-1}(U^{c})).$
So, $W\prec_{S_{1}}f^{-1}(U^{c})$, and by the property (QH) there
exists $N\in\Clop(X_{2})$ such that $W\subseteq f^{-1}(N)$ and $N\prec_{S_{2}}U^{c}$.
Then, $N\in\varepsilon_{2}(f(x))\cap{\downarrow}_{S_{2}}(U^{c})$,
i.e., $U^{c}\in\uparrow_{S_{2}}(\varepsilon_{2}(f(x)))$, and by hypothesis,
$U^{c}\in\varepsilon_{2}(y)$, i.e., $y\in U^{c}$, which is impossible.
Therefore, there exists 
\[
z\in\bigcap\left\{ V:\downarrow_{S_{1}}(V)\cap\varepsilon_{1}(x)\neq\emptyset\right\} \cap\bigcap\left\{ f^{-1}(U):y\in U\in\Clop(X_{2})\right\} .
\]
Now, it is easy to see that $\uparrow_{S_{1}}(\varepsilon_{1}(x))\subseteq\varepsilon_{1}(z)$
and $f(z)=y$. So, $x\triangleleft_{S_{1}}z$ and $f(z)=y$.

We prove $\left(2\right)\Rightarrow\left(3\right)$. Let $U\in\mathrm{\mathrm{Clop}}(X_{2})$
and $x\in\mathrm{int}_{S_{1}}(f^{-1}(U))$. Suppose that $x\notin f^{-1}(\mathrm{int}_{S_{2}}(U))$,
i.e., $f(x)\in\mathrm{cl}_{S_{2}}(U^{c}).$ By Proposition \ref{prop:closure  increasing-1},
$\mathrm{cl}_{S_{2}}(U^{c})=\left(U^{c}\right]_{S_{2}}$. Then there
exists $y\in X_{2}$ such that $f(x)\triangleleft_{S_{2}}y$ and $y\notin U$.
So, there exists $z\in X_{1}$ such that $x\triangleleft_{S_{1}}z$
and $f(z)=y$. As $x\in\mathrm{int}_{S_{1}}(f^{-1}(U))$, $z\in\mathrm{int}_{S_{1}}(f^{-1}(U))\subseteq f^{-1}(U)$,
i.e., $f(z)=y\in U$, which is impossible. Therefore, $\mathrm{int}_{S_{1}}(f^{-1}(U))\subseteq f^{-1}(\mathrm{int}_{S_{2}}(U))$.

$\left(3\right)\Rightarrow\left(1\right)$. Let $U\in\mathrm{Clop}(X_{1})$
and $V\in\mathrm{Clop}(X_{2})$ such that $U\prec_{S_{1}}f^{-1}(V)$.
Then $U\subseteq\mathrm{int}_{S_{1}}(f^{-1}(V))$, and so $U\subseteq f^{-1}(\mathrm{int}_{S_{2}}(V))$.
It is easy to see that $f^{-1}(\mathrm{int}_{S_{2}}(V))=\bigcup\left\{ f^{-1}(W):W\in\mathrm{Clop}(X_{2})\text{ and }W\subseteq\mathrm{int}_{S_{2}}(V)\right\} $.
Since $U$ is compact, there is $W\in\mathrm{Clop}(X_{2})$ such that
$U\subseteq f^{-1}(W)$ and $W\subseteq\mathrm{int}_{S_{2}}(V)$. 

\end{proof}

\begin{cor}
Let $f:X_{1}\rightarrow X_{2}$ be a bijective $\tau$-continuous
function between two $S4$-subordination spaces. Then the following
conditions are equivalent:
\begin{enumerate}
\item $x\triangleleft_{1}y$ iff $f(x)\triangleleft_{2}f(y)$, for all $x,y\in X_{1}$.
\item $f^{-1}(U)\in\tau_{S_{2}}$, for any $U\in\tau_{S_{1}}$. Thus,$f$
is $\tau_{S}$-continuous. 
\end{enumerate}
\end{cor}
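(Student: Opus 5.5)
The plan is to reduce everything to Proposition \ref{prop:f stable}, applied both to $f$ and to its inverse. I read condition (2) as saying that $f$ is a homeomorphism between $\left<X_{1},\tau_{S_{1}}\right>$ and $\left<X_{2},\tau_{S_{2}}\right>$, i.e.\ both $\tau_S$-continuous and $\tau_S$-open. This is the reading under which the equivalence with the two-sided condition (1) is meaningful.

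\emph{Step 1: $f^{-1}$ is $\tau$-continuous.} Since $\left<X_{1},\tau_{1}\right>$ is compact and $\left<X_{2},\tau_{2}\right>$ is Hausdorff, the continuous bijection $f$ is a $\tau$-homeomorphism. Hence $g=f^{-1}$ is also $\tau$-continuous, and Proposition \ref{prop:f stable} applies to $g$ as well as to $f$. In particular, $U\mapsto f^{-1}(U)$ is a bijection from $\Clop(X_{2})$ onto $\Clop(X_{1})$.

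\emph{Step 2: (1) $\Rightarrow$ (2).}
\begin{itemize}
\item The forward implication in (1) says $f$ is stable. By Proposition \ref{prop:f stable}, this gives $f^{-1}(\mathrm{int}_{S_{2}}(U))\subseteq\mathrm{int}_{S_{1}}(f^{-1}(U))$ for every $U\in\Clop(X_{2})$.
\item The backward implication in (1) says $g$ is stable. Applying the same proposition to $g$ with $W=f^{-1}(U)$ gives $f(\mathrm{int}_{S_{1}}(f^{-1}(U)))\subseteq\mathrm{int}_{S_{2}}(U)$.
\item Together these yield
\[
f^{-1}(\mathrm{int}_{S_{2}}(U))=\mathrm{int}_{S_{1}}(f^{-1}(U))
\]
for every $U\in\Clop(X_{2})$.
\item By (Sp2), the sets $\mathrm{int}_{S_{2}}(U)$ form a base of $\tau_{S_{2}}$, and every basic open $\mathrm{int}_{S_{1}}(V)$ with $V\in\Clop(X_{1})$ has the form $\mathrm{int}_{S_{1}}(f^{-1}(U))$ with $U=f(V)$.
\item So $f$ maps the base $S_{1}$ bijectively onto the base $S_{2}$, in both directions. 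Hence $f$ is $\tau_S$-continuous and $\tau_S$-open, which is (2).
\end{itemize}

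\emph{Step 3: (2) $\Rightarrow$ (1).}
\begin{itemize}
\item For a point in a space with base $S$, $x\leq_{S}y$ says that every basic open containing $x$ contains $y$. So $\leq_{S}$ is exactly the specialization order of $\tau_{S}$, as noted in the Remark after Definition \ref{def:S4-space}.
\item Specialization orders are preserved and reflected by homeomorphisms, so $x\leq_{S_{1}}y$ iff $f(x)\leq_{S_{2}}f(y)$.
\item In an $\mathrm{S4}$-subordination space, $\leq_{S}\subseteq\triangleleft_{S}$ always holds, and (Sp4) gives $\triangleleft_{S}\subseteq\leq_{S}$. Hence $\triangleleft_{S}=\leq_{S}$ in both spaces, and (1) follows.
\item Alternatively, one can note that a $\tau_S$-homeomorphism commutes with $\mathrm{int}_{S}$. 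This gives condition (2) of Proposition \ref{prop:f stable} for both $f$ and $g$, and hence stability of both.
\end{itemize}

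\emph{Main obstacle.} The only delicate point is Step 1. Stability of $f^{-1}$ requires knowing that $f^{-1}$ is $\tau$-continuous, which is what licenses the use of Proposition \ref{prop:f stable} in the reverse direction. This is supplied by the compact-to-Hausdorff argument.

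The other point to handle with care is the interpretation of (2). If (2) is read as mere $\tau_S$-continuity, then only the forward implication of (1) follows, because a continuous map only preserves the specialization order. For the equivalence as stated, I will prove it for the homeomorphism reading and flag this explicitly.
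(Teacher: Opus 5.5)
Your proof is correct. Reading (2) as ``$f$ is a homeomorphism from $\left<X_{1},\tau_{S_{1}}\right>$ onto $\left<X_{2},\tau_{S_{2}}\right>$'' is necessary, not just cautious: it is the only reading under which the equivalence holds.

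Here is a counterexample to the one-sided readings. Take the two-point discrete Stone space $X=\{p,q\}$ with two choices of $\tau_S$:
\begin{itemize}
\item $\tau_{S_{1}}=\tau$, so that $\triangleleft_{1}$ is equality;
\item $\tau_{S_{2}}=\{\emptyset,X\}$, so that $\triangleleft_{2}=X\times X$.
\end{itemize}
Both are $\mathrm{S4}$-subordination spaces. The identity $X_{1}\rightarrow X_{2}$ is bijective, $\tau$-continuous, $\tau_{S}$-continuous and stable, but it does not reflect $\triangleleft$. Its inverse $X_{2}\rightarrow X_{1}$ is $\tau_{S}$-open but not stable. So neither ``$\tau_{S}$-continuous'' nor ``$\tau_{S}$-open'' alone is equivalent to (1).

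The paper states this corollary without proof, immediately after Proposition \ref{prop: f propiedad (PM)}. The intended derivation presumably goes through that proposition, and it differs from yours only in how the reflection half of (1) becomes an inclusion of interiors:
\begin{itemize}
\item For bijective $f$, the witness $z$ in condition (2) of Proposition \ref{prop: f propiedad (PM)} is forced to be $f^{-1}(y)$. So that condition says exactly that $f(x)\triangleleft_{2}f(z)$ implies $x\triangleleft_{1}z$.
\item Condition (3) of that proposition then gives $\mathrm{int}_{S_{1}}(f^{-1}(U))\subseteq f^{-1}(\mathrm{int}_{S_{2}}(U))$. This step uses the description $\mathrm{cl}_{S}(U)=(U]_{\triangleleft}$ from Proposition \ref{prop:closure  increasing-1}.
\end{itemize}
You obtain the same inclusion more economically, by applying Proposition \ref{prop:f stable} to $f^{-1}$. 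That costs you Step 1, but Step 1 is unavoidable in either route. To conclude that $f$ is $\tau_{S}$-open, one needs every $V\in\Clop(X_{1})$ to be of the form $f^{-1}(U)$, and this requires $f$ to be a $\tau$-homeomorphism.

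For (2)$\Rightarrow$(1), your argument is more direct than unwinding the two propositions again. A homeomorphism preserves and reflects specialization orders, and $\triangleleft_{S}=\leq_{S}$ in any $\mathrm{S4}$-subordination space.
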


\begin{defn}
\label{def:stable morphism}Let $f:X_{1}\rightarrow X_{2}$ be a $\tau$-continuous
function between two $S4$-subordination spaces $\left<X_{1},\tau_{1},\tau_{S_{1}}\right>$.
We shall say that $f$ is strong $\triangleleft$\emph{-stable}, or
\emph{strong stable}, if $f$ is stable and satisfies any of the conditions
of Proposition \ref{prop: f propiedad (PM)}.
\end{defn}

For $i=1,2$, let $\left<A_{i},\prec_{i}\right>\in\mathsf{SubS4}$,
and let $\left<X_{i},\tau_{i},\tau_{S_{i}}\right>$ its dual spaces.
If $h:A_{1}\rightarrow A_{2}$ is a Boolean homomorphism then we define
the map $h_{*}:X_{2}\rightarrow X_{1}$ by $h_{*}(y)=h^{-1}(y)$,
for each $y\in X_{2}$. By Boolean duality, Proposition \ref{prop:f stable},
and Proposition \ref{prop: f propiedad (PM)} we have the following
characterization of subordination and strong subordination homomorphisms.
\begin{cor}
\label{thm:duality homomo}Let $\left<A_{i},\prec_{i}\right>\in\mathsf{SubS4}$,
and let $\left<X_{i},\tau_{i},\tau_{S_{i}}\right>$ its dual spaces.
Let $h:A_{1}\rightarrow A_{2}$ be a Boolean homomorphism. Then
\begin{enumerate}
\item $h$ is a subordination homomorphism iff $h_{*}:X_{2}\rightarrow X_{1}$
is stable. 
\item $h$ is a strong subordination homomorphism iff $h_{*}:X_{2}\rightarrow X_{1}$
is strong stable. 
\end{enumerate}
\end{cor}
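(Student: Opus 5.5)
The plan is to reduce the corollary to the two preceding propositions by transporting everything through Stone duality. First I identify the spaces: by Theorem \ref{prop:propierties of dual space of an alg}, $\varphi_i:A_i\to\Clop(X_i)$ is a Boolean isomorphism with $a\prec_i b$ iff $\varphi_i(a)\prec_{S_i}\varphi_i(b)$. So $\left<A_i,\prec_i\right>$ is isomorphic, as a subordination algebra, to $\left<\Clop(X_i),\prec_{S_i}\right>$. By classical Boolean duality, $h_*:X_2\to X_1$, $h_*(y)=h^{-1}(y)$, is $\tau$-continuous. The key identity is
\[
(h_*)^{-1}(\varphi_1(a))=\varphi_2(h(a)),\qquad a\in A_1,
\]
which holds because $y\in(h_*)^{-1}(\varphi_1(a))$ iff $a\in h^{-1}(y)$ iff $h(a)\in y$. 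Hence $(h_*)^*\circ\varphi_1=\varphi_2\circ h$, so $h$ and $(h_*)^*:\Clop(X_1)\to\Clop(X_2)$ are conjugate via the subordination isomorphisms $\varphi_1,\varphi_2$.

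Next I check that the properties are invariant under this conjugation. A map is a subordination homomorphism iff its conjugate is, since $\varphi_i$ preserves and reflects $\prec$. For (QH), suppose $U\prec_{S_2}(h_*)^*(V)$ with $U=\varphi_2(a)$ and $V=\varphi_1(b)$. Then $a\prec_2 h(b)$, so (QH) for $h$ gives $c$ with $a\le h(c)$ and $c\prec_1 b$. Translating back, $U\subseteq (h_*)^*(\varphi_1(c))$ and $\varphi_1(c)\prec_{S_1}V$. The converse direction is the same argument run backwards. Surjectivity of the $\varphi_i$ onto the clopen algebras is what makes these translations exhaustive.

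With this in place, item (1) follows from Proposition \ref{prop:f stable}, applied to $f=h_*$ with the indices swapped, so that $X_2$ is the domain: $h_*$ is stable iff $(h_*)^*$ is a subordination homomorphism, iff $h$ is. Item (2) follows from Proposition \ref{prop: f propiedad (PM)}. Given stability, which item (1) has shown to be equivalent to $h$ being a subordination homomorphism, $h_*$ satisfies the conditions there iff $(h_*)^*$ has (QH), iff $h$ has (QH). Combining this with Definition \ref{def:stable morphism} gives the claim.

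I do not expect a real obstacle here. The only point needing care is getting the direction of the maps and indices right when applying the propositions, since they are stated for $f:X_1\to X_2$ while here the map is $h_*:X_2\to X_1$. The invariance of (QH) under conjugation is the one step worth writing out explicitly.
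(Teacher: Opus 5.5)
Your proposal is correct and takes the same route as the paper, which derives the corollary directly from Propositions \ref{prop:f stable} and \ref{prop: f propiedad (PM)} applied to $f=h_*$. You also write out what the paper leaves implicit: the Stone-duality identity $(h_*)^{-1}(\varphi_1(a))=\varphi_2(h(a))$, and the check that being a subordination homomorphism and satisfying (QH) are both preserved under conjugation by the subordination isomorphisms $\varphi_1,\varphi_2$.
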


\begin{proof}
It follows by Proposition \ref{prop:f stable} and Proposition \ref{prop: f propiedad (PM)}.
\end{proof}

It is clear that the usual composition of stable or strong stable
morphisms is a stable or strong stable morphism. Thus, for each class
of $\mathrm{S4}$-subordination spaces studied in this paper there
exists two categories, depending on whether we use the notion of stable
morphism or the notion of strong stable morphism.We consider the categories
$\mathbf{SubS4}_{h}$ and $\mathbf{SubS4}_{st}$ whose objects belong
to $\mathrm{SubS4}$ and whose morphisms are subordination homomorphisms,
and strong subordination homomorphisms, respectively. Let $\mathrm{BiSpS4}$
be the class of $\mathrm{S4}$-subordination spaces. We consider the
categories $\mathbf{BiSpS4}_{s}$and $\mathbf{BiSpS4}_{st}$ whose
morphisms are stable morphisms, and strong stable morphisms, respectively.

By Propositions \ref{prop:propierties of dual space of an alg}, \ref{lem:caracterizacion order-1},
\ref{prop:f stable} and \ref{prop: f propiedad (PM)} we have the
following result.
\begin{thm}
\label{thm: duality categorical I}The categories $\mathbf{SubS4}_{h}$
and $\mathbf{SubS4}_{st}$\textup{ are dually equivalent to the categories
}$\mathbf{BiSpS4}_{s}$ and $\mathbf{BiSpS4}_{st}$ , respectively.
\end{thm}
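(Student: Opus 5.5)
We will prove the duality by assembling the object-level and morphism-level correspondences already established, following the standard pattern of Stone duality. We define contravariant functors $\Phi:\mathbf{SubS4}_{h}\rightarrow\mathbf{BiSpS4}_{s}$ and $\Psi:\mathbf{BiSpS4}_{s}\rightarrow\mathbf{SubS4}_{h}$. On objects, $\Phi$ sends $\left<A,\prec\right>$ to its dual space $\left<\Ul(A),\tau,\tau_{S}\right>$, which is an $\mathrm{S4}$-subordination space by Theorem \ref{prop:propierties of dual space of an alg}. On morphisms, $\Phi$ sends $h$ to $h_{*}=h^{-1}(\cdot)$, which is $\tau$-continuous by Stone duality and stable by Corollary \ref{thm:duality homomo}. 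The functor $\Psi$ sends $\left<X,\tau,\tau_{S}\right>$ to $\left<\Clop(X),\prec_{S}\right>\in\mathsf{SubS4}$, and sends $f$ to $f^{*}=f^{-1}(\cdot)$, which is a subordination homomorphism by Proposition \ref{prop:f stable}. Functoriality is immediate, since $(g\circ h)_{*}=h_{*}\circ g_{*}$ and $(f\circ g)^{*}=g^{*}\circ f^{*}$ hold set-theoretically and identities are preserved.

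Next we produce the two natural isomorphisms. On the algebra side, the Stone map $\varphi_{A}:A\rightarrow\Clop(\Ul(A))$ is a Boolean isomorphism. The equivalence $a\prec b$ iff $\varphi(a)\prec_{S}\varphi(b)$ in Theorem \ref{prop:propierties of dual space of an alg} shows it is a subordination isomorphism, so its inverse is also a morphism of $\mathbf{SubS4}_{h}$. On the space side, $\varepsilon_{X}:X\rightarrow\Ul(\Clop(X))$ is a bitopological homeomorphism by Theorem \ref{lem:caracterizacion order-1}. A bitopological homeomorphism preserves and reflects $\triangleleft_{S}$, since that relation is defined purely from $\tau_{S}$-interiors of $\tau$-clopens. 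Hence $\varepsilon_{X}$ and $\varepsilon_{X}^{-1}$ are both stable. Naturality is the classical Stone computation: $\varphi_{A_{2}}(h(a))=(h_{*})^{-1}(\varphi_{A_{1}}(a))$ and $\varepsilon_{X_{1}}(f^{-1}\text{-pullback})$ commute with $f$. These identities involve only the Boolean structure, so they transfer verbatim.

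For the strong version we restrict the same functors. By Corollary \ref{thm:duality homomo}(2), $h$ is strong iff $h_{*}$ is strong stable. By Proposition \ref{prop: f propiedad (PM)}, $f$ is strong stable iff $f^{*}$ satisfies (QH). So $\Phi$ and $\Psi$ restrict to $\mathbf{SubS4}_{st}$ and $\mathbf{BiSpS4}_{st}$. We also need these classes closed under composition, which the paper asserts. The natural isomorphisms remain morphisms in the restricted categories. Any isomorphism $\varphi_{A}$ trivially satisfies (QH): if $U\prec_{S}\varphi(b)$, take $c$ with $\varphi(c)=U$, so $c\prec b$. Likewise $\varepsilon_{X}$ satisfies condition (2) of Proposition \ref{prop: f propiedad (PM)} via $z=\varepsilon^{-1}(y)$.

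The main obstacle is bookkeeping rather than mathematics: checking that $\varepsilon_{X}^{-1}$ and $\varphi_{A}^{-1}$ really lie in the respective morphism classes. This requires reflection of $\triangleleft$, not just preservation. We will handle it through the identity $\varepsilon[\mathrm{int}_{S}U]=\mathrm{int}_{S_{\varphi_{S}}}\varphi(U)$ proved in Theorem \ref{lem:caracterizacion order-1}, which gives $x\triangleleft_{S}y$ iff $\varepsilon(x)\triangleleft\varepsilon(y)$ directly.
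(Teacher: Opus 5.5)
Your proposal is correct and follows the same route as the paper. The paper's proof just cites four results: the representation theorem for $\mathsf{SubS4}$ (dual objects, with $\varphi$ a subordination isomorphism), the theorem that $\varepsilon$ is a bitopological homeomorphism, and the two propositions characterizing stable and strong stable maps via $f^{*}$. You make explicit what the paper leaves implicit: functoriality, naturality of $\varphi$ and $\varepsilon$, and the check that $\varphi_A^{\pm1}$ and $\varepsilon_X^{\pm1}$ are (strong) morphisms.
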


\begin{rem}
The duality established in Theorem \ref{thm: duality categorical I}
for S4\nobreakdash-subordination algebras is closely related to the
duality presented in \cite[Thm. 4.11(2)]{Bezhanesville-Nick and venema}
and the relational representation given in \cite[Theorem 26]{Celani QUasi modal}.
In these works, a S4\nobreakdash-subordination algebra $(B,\prec)$
is represented by a pair $(X,R)$ where $X$ is the Stone space of
$B$ and $R$ is a closed quasi\nobreakdash-order (i.e., a reflexive
and transitive closed relation) on $X$. The subordination $\prec$
is then recovered by the relation $R$ as follows: $a\prec b\iff R[\varphi(a)]\subseteq\varphi(b)$,
for all $a,b\in B$, where $\varphi$ denotes the Stone map. In our
approach, we instead use a bitopological space $(X,\tau,\tau_{S})$
where $\tau$ is the Stone topology of $B$ and $\tau_{S}$ is an
additional topology whose opens are the sets $\Int_{S}(U)$ for clopen
$U$ of $X$. The relation $R$ considered in \cite{Celani QUasi modal,Bezhanesville-Nick and venema}
coincides with the specialization order of $\tau_{S}$. Moreover,
the condition $R[\varphi(a)]\subseteq\varphi(b)$ becomes $\varphi(a)\subseteq\Int_{S}(\varphi(b))$,
i.e., $\varphi(a)\prec_{S}\varphi(b)$. Thus, the two dualities are
essentially equivalent at the level of object, but they differ in
the choice of morphisms and in the underlying geometric picture: \cite{Bezhanesville-Nick and venema}
works with stable continuous maps between relational Stone spaces,
while we work with bitopological continuous maps (with respect to
both topologies). The bitopological perspective offers a direct link
to the theory of bitopological spaces and allows a natural treatment
of round filters and congruences (see Sections\,\ref{sec:Subordination-congruences}).
Consequently, Theorem\,\ref{thm: duality categorical I} can be seen
as a bitopological reinterpretation of the relational duality given
\cite{Bezhanesville-Nick and venema} and \cite{Celani QUasi modal}.
\end{rem}

\section{Categorical Dualities II}\label{sec:Subordination-meet-hemimorphisms}

Under the Vries duality, the dual morphisms of continuous functions
between compact Hausdorff spaces are meet-homomorphisms between de
Vries algebras satisfying additional conditions \cite{de Vries tesis}.
Thus, it is natural consider morphisms between S4-subordination algebras
based on meet-homomorphisms rather than of Boolean homomorphisms.
In this section, we explore different alternatives for morphisms between
S4-subordination algebras and characterize these alternatives in topological
terms.

\medskip{}

Let $A_{1}$ and $A_{2}$ be two Boolean algebras. A \emph{meet-homomorphism,}
between $A_{1}$ and $A_{2}$ is a function $h:A_{1}\rightarrow A_{2}$
such that $h(1)=1$ and $h(a\wedge b)=h(a)\wedge h(b)$, for all $a,b\in A_{1}$.
The dual map $g:A_{1}\rightarrow A_{2}$ is defined as $g(a)=\neg h(\neg a),$
for each $a\in A_{2}$. It is clear that $g$ is a join-homomorphism,
i.e., $g(a\vee b)=g(a)\vee g(b)$ and $g(0)=0$, for all $a,b\in A_{1}$. 

The following result is known and will be used later.
\begin{lem}
Let $A_{1}$ and $A_{2}$ be two Boolean algebras. A\emph{ map} $h:A_{1}\rightarrow A_{2}$
is a meet-homomorphism iff $h^{-1}(F)\in\mathrm{Fi}(A_{1})$, for
every $F\in\mathrm{Fi}(A_{2})$.
\end{lem}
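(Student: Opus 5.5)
We prove the two implications separately. Both directions use only the definitions of meet-homomorphism and filter, together with the fact that every principal filter $[b)$ of $A_{2}$ is a filter.

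For the forward direction, assume $h$ is a meet-homomorphism and let $F\in\mathrm{Fi}(A_{2})$. We check the three filter axioms for $h^{-1}(F)$.
\begin{enumerate}
\item Since $h(1)=1\in F$, we have $1\in h^{-1}(F)$.
\item Let $a,b\in h^{-1}(F)$. Then $h(a\wedge b)=h(a)\wedge h(b)\in F$, because $F$ is closed under meets. Hence $a\wedge b\in h^{-1}(F)$.
\item A meet-homomorphism is monotone: if $a\leq b$, then $h(a)=h(a\wedge b)=h(a)\wedge h(b)\leq h(b)$. So if $a\leq b$ and $h(a)\in F$, upward closure of $F$ gives $h(b)\in F$, i.e. $b\in h^{-1}(F)$.
\end{enumerate}

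For the converse, assume $h^{-1}(F)$ is a filter of $A_{1}$ for every $F\in\mathrm{Fi}(A_{2})$. We apply this to suitable principal filters.
\begin{enumerate}
\item Take $F=[1)=\{1\}$. Since $1\in h^{-1}([1))$, we get $h(1)=1$.
\item For monotonicity, let $a\leq b$ and take $F=[h(a))$. Then $a\in h^{-1}([h(a)))$. Because this preimage is upward closed, $b$ also lies in it, so $h(a)\leq h(b)$.
\item By monotonicity, $h(a\wedge b)\leq h(a)\wedge h(b)$.
\item For the reverse inequality, put $c=h(a)\wedge h(b)$ and take $F=[c)$. Both $a$ and $b$ lie in $h^{-1}([c))$. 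Since this preimage is closed under meets, $a\wedge b$ lies in it as well, so $c\leq h(a\wedge b)$.
\end{enumerate}
Combining the last two steps gives $h(a\wedge b)=h(a)\wedge h(b)$, so $h$ is a meet-homomorphism.

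The proof is entirely routine. The one step needing a little care is the converse, where the right principal filter must be chosen: $[h(a)\wedge h(b))$ for meet preservation and $[h(a))$ for monotonicity. No results from earlier in the paper are needed beyond the definitions recalled in Section \ref{sec:Preliminaries}.
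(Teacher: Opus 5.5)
Your proof is correct and complete. The paper gives no proof of this lemma (it states it as a known result), and your argument, which tests the hypothesis on the principal filters $[1)$, $[h(a))$ and $[h(a)\wedge h(b))$ and uses only the paper's definition of filter, is the standard proof.
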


Let $X_{1}$ and $X_{2}$ be two Stone spaces. A \emph{Boolean} \emph{relation}
is a binary relation $R\subseteq X_{1}\times X_{2}$ satisfying the
conditions:
\begin{enumerate}
\item $R(x)$ is a closed subset for each $x\in X_{1}$,
\item $h_{R}(U)=\left\{ x\in X_{1}:R(x)\subseteq U\right\} \in\mathrm{Clop}(X_{1})$,
for each $U\in\mathrm{Clop}(X_{2})$. 
\end{enumerate}
It is knows that there exists a duality between meet-homomorphism
of Boolean algebras and Boolean relations (see \cite{Halmos1962},
\cite{Kopp} \cite{Sambin}, and \cite{Wright}). If $h:A_{1}\rightarrow A_{2}$
is a meet-homomorphism, and $X_{1}$ be the Stone space of $A_{1}$
and $X_{2}$ be the Stone space of $A_{2}$, then the relation $R_{h}\subseteq X_{2}\times X_{1}$
defined by 
\[
(x,y)\in R_{h}\text{ iff }h^{-1}\left[x\right]\subseteq y,
\]
is a Boolean relation. We recall that for each $a\in A_{1}$, $h(a)\notin x$
iff there exists $y\in X_{2}$ such that $(x,y)\in R_{h}$ and $a\notin y$. 

Conversely, given a Boolean relation $R$ between the Stone spaces
$X_{1}$ and $X_{2}$, we define the function
\[
h_{R}:\mathrm{Clop}(X_{2})\rightarrow\mathrm{Clop}(X_{1})
\]
 by setting 
\[
h_{R}(U)=\left\{ x\in X_{1}:R(x)\subseteq U\right\} .
\]
It is easy to see that function $h_{R}$ is a meet-homomorphism. The
function $g_{R}:\mathrm{Clop}(X_{2})\rightarrow\mathrm{Clop}(X_{1})$
defined by 
\[
g_{R}(U)=\left\{ x\in X_{1}:R(x)\cap U\neq\emptyset\right\} 
\]
is a join-homomorphism. If $h:A_{1}\rightarrow A_{2}$ is a meet-homomorphism,
and $R=R_{h}$ is the Boolean relation between $X_{2}$ and $X_{1}$,
then $h_{R}(\varphi(a))=\varphi(h(a))$ for all $a\in A_{1}$.

In the next results we assume that $\left<A,\prec_{1}\right>$ and
$\left<A_{2},\prec_{2}\right>$ are S4-subordination algebras and
$\left<X_{1},\tau_{1},\tau_{S_{1}}\right>$ and $\left<X_{2},\tau_{2},\tau_{S_{2}}\right>$
are the duals S4-subordination spaces of $\left<A_{1},\prec_{1}\right>$
and $\left<A_{2},\prec_{2}\right>$, respectively. We also suppose
$h:A_{1}\rightarrow A_{2}$ is a meet-homomorphism and $R\subseteq X_{2}\times X_{1}$
is the dual Boolean relation of $h$. 

Now we are going to study certain properties required in meet-homomorphisms
that are used to define different classes of morphisms between subordination
algebras and its topological characterizations. 

Let us consider the following conditions defined on a meet-homomorphism
$h:A_{1}\rightarrow A_{2}$:
\begin{enumerate}
\item[(CO)]  $h(0)=0$.
\item[(MO)]  If $a\prec b$ implies $h(a)\prec h(b)$.
\item[(QH)]  If $a\prec h(b)$, then there exists $c\in A$ such that $a\leq h(c)$
and $c\prec b$.
\item[(DH)]  If $a\prec b$, then $g(a)\prec h(b),$for all $a,b\in A_{1}$.
\end{enumerate}
The condition (MO) can be write as $h\left[{\downarrow}a\right]\subseteq{\downarrow}h(a)$,
for all $a\in A_{1}$, and the condition (QH) can be write as ${\downarrow}h(a)\subseteq\left(h\left[{\downarrow}a\right]\right]$
(see \cite[Definition 8]{Celani QUasi modal}). 

We note that if $h$ satisfies condition (CO), then $0=h(0)=h(a\wedge\neg a)=h(a)\wedge h(\neg a)$
iff $h(a)\leq\neg h(\neg a)=g(a)$, for each $a\in A_{1}$. Thus,
if $h$ satisfies (CO) and (DH), then $h$ satisfies (MO). 

Now we are going to characterize the condition (MO).
\begin{lem}
\label{lem:morfismo subordination}Let $\left<A,\prec_{1}\right>,$$\left<A_{2},\prec_{2}\right>\in\mathrm{SubS4}$.
Let $h:A_{1}\rightarrow A_{2}$ be a meet-homomorphism. Then the following
conditions are equivalent:
\begin{enumerate}
\item $h$ satisfies the condition $\mathrm{(MO)}$, 
\item \label{condition subordination relation}$h_{R}(\mathrm{int}_{S_{1}}(U))\subseteq\mathrm{int}_{S_{2}}(h_{R}(U))$,
for all $U\in\mathrm{Clop}(X_{1})$,
\item $h_{R}\left[\downarrow_{S_{1}}(U)\right]\subseteq\downarrow_{S_{2}}(h_{R}(U))$,
for all $U\in\mathrm{Clop}(X_{1})$.
\end{enumerate}
\end{lem}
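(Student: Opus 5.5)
The plan is to transport everything to the algebra side using the identifications already available. By Stone duality every $U\in\Clop(X_1)$ has the form $\varphi_1(a)$ for a unique $a\in A_1$, and $h_R(\varphi_1(a))=\varphi_2(h(a))$. By Theorem \ref{prop:propierties of dual space of an alg}, $\mathrm{int}_{S_i}(\varphi_i(a))=\beta_i({\downarrow}_i a)$, and $\varphi_i(a)\prec_{S_i}\varphi_i(b)$ iff $a\prec_i b$. In particular ${\downarrow}_{S_i}(\varphi_i(a))=\varphi_i[{\downarrow}_i a]$. I read condition (2) as concerning the clopens of $X_1$ contained in $\mathrm{int}_{S_1}(U)$, since $h_R$ is a priori only defined on clopens. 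Equivalently, it says that $h_R(V)\subseteq\mathrm{int}_{S_2}(h_R(U))$ whenever $V\in\Clop(X_1)$ and $V\subseteq\mathrm{int}_{S_1}(U)$; alternatively one extends $h_R$ to arbitrary sets by $\{x:R(x)\subseteq Y\}$. With this reading, (2) and (3) are the same statement $h_R[{\downarrow}_{S_1}U]\subseteq{\downarrow}_{S_2}h_R(U)$ rewritten through $V\prec_{S}W\iff V\subseteq\mathrm{int}_S(W)$.

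For (1)$\Leftrightarrow$(3), take $U=\varphi_1(a)$. Condition (3) says that for every $b\in{\downarrow}_1 a$ we have $\varphi_2(h(b))\prec_{S_2}\varphi_2(h(a))$. By the equivalence $\varphi_2(c)\prec_{S_2}\varphi_2(d)\iff c\prec_2 d$ this is exactly $h[{\downarrow}_1 a]\subseteq{\downarrow}_2 h(a)$, which is (MO). Both directions are immediate once the translation is set up.

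For (2)$\Leftrightarrow$(3) under the extended reading $h_R(Y)=\{x:R(x)\subseteq Y\}$, the argument runs as follows. Direction (2)$\Rightarrow$(3) follows from monotonicity of $h_R$ and the fact that $V\subseteq\mathrm{int}_{S_1}(U)$ for $V\in{\downarrow}_{S_1}U$. For (3)$\Rightarrow$(2), take $x$ with $R(x)\subseteq\mathrm{int}_{S_1}(U)=\bigcup\{V\in\Clop(X_1):V\subseteq\mathrm{int}_{S_1}(U)\}$. Since $R(x)$ is $\tau$-closed, hence compact, and ${\downarrow}_{S_1}U$ is an ideal (Lemma \ref{lem:filter and ideal}), a single $V\in{\downarrow}_{S_1}U$ satisfies $R(x)\subseteq V$. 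Then $x\in h_R(V)\subseteq\mathrm{int}_{S_2}(h_R(U))$ by (3).

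The main obstacle is this compactness step together with fixing the meaning of $h_R$ on the non-clopen set $\mathrm{int}_{S_1}(U)$. Everything else is bookkeeping via Theorem \ref{prop:propierties of dual space of an alg}.
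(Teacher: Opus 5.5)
Your proof is correct. Its (2)$\Leftrightarrow$(3) part matches the paper's: monotonicity of the extended map $h_R(Y)=\{x:R(x)\subseteq Y\}$ gives one direction, and compactness of $R(x)$ inside the $\tau$-open set $\mathrm{int}_{S_1}(U)$ gives the other. The paper tacitly uses the same extended reading of $h_R$ that you adopt.

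Where you differ is in how (MO) enters. The paper proves only (1)$\Rightarrow$(2). It first establishes the identity $h_R(\beta_1({\downarrow}_1 a))=\beta_2(h[{\downarrow}_1 a])$ by a prime filter argument: if $h^{-1}(x)\cap{\downarrow}_1 a=\emptyset$, an ultrafilter $y\supseteq h^{-1}(x)$ avoiding the ideal ${\downarrow}_1 a$ exists. It then uses (MO) to get $\beta_2(h[{\downarrow}_1 a])\subseteq\beta_2({\downarrow}_2 h(a))=\mathrm{int}_{S_2}(h_R(\varphi_1(a)))$. After that, its written proof runs (2)$\Rightarrow$(3)$\Rightarrow$(2) and never returns to (1).

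You instead make (3) the hub and get (1)$\Leftrightarrow$(3) in both directions by translation alone, using ${\downarrow}_{S_i}(\varphi_i(a))=\varphi_i[{\downarrow}_i a]$ and $h_R\circ\varphi_1=\varphi_2\circ h$. This needs no separation argument and actually closes the cycle of implications. The paper's route buys an explicit description of $h_R(\mathrm{int}_{S_1}(\varphi_1(a)))$ as the $\tau$-open set $\beta_2(h[{\downarrow}_1 a])$. With one more line (for an ideal $I$, $\beta_2(Y)\subseteq\beta_2(I)$ iff $Y\subseteq I$), that identity would also give (2)$\Rightarrow$(1), but the paper does not write this. Your version is shorter and complete as it stands.
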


\begin{proof}
$\left(1\right)\Rightarrow\left(2\right)$ Let $U=\varphi_{1}(a)$.
First, let us prove that $h_{R}(\beta(\downarrow_{1}a))=\beta(h\left[\downarrow_{1}a\right])$.
Let $x\in X_{2}$ such that $R(x)\subseteq\beta(\downarrow_{1}a))$.
If $x\notin\beta(h\left[\downarrow_{1}a\right])$, then $\downarrow_{1}a\cap h^{-1}(x)\neq\emptyset$.
Then there exists $y\in X_{1}$ such that $h^{-1}(x)\subseteq y$
and $\downarrow_{1}a\cap y=\emptyset$, i.e., $y\in R(x)\subseteq\beta(\downarrow_{1}a)),$i.e.,
$\downarrow_{1}a\cap y\neq\emptyset$, which is a contradiction. The
other inclusion is proved similarly. 

Then 
\begin{eqnarray*}
h_{R}(\mathrm{int}_{S_{1}}(\varphi_{1}(a))) & = & h_{R}(\beta(\downarrow_{1}a))\\
 & = & \beta(h\left[\downarrow_{1}a\right])\\
 & \subseteq & \beta(\downarrow_{2}h(a))\\
 & = & \mathrm{int}_{S_{2}}(h_{R}(\varphi_{1}(a))).
\end{eqnarray*}

$\left(2\right)\Rightarrow\left(3\right)$ Let $W\in h_{R}\left[\downarrow_{S_{1}}(U)\right]$.
Then there exists $V\prec_{S_{1}}U$ such that $W=h_{R}(V)$. From
$V\subseteq\mathrm{int}_{S_{1}}(U)$ we have $W=h_{R}(V)\subseteq h_{R}(\mathrm{int}_{S_{1}}(U))\subseteq\mathrm{int}_{S_{2}}(h_{R}(U))$.
Then $W\prec_{S_{2}}h_{R}(U)$, i.e., $W\in\downarrow_{S_{2}}(h_{R}(U))$. 

$\left(3\right)\Rightarrow\left(2\right)$ Let $x\in h_{R}(\mathrm{int}_{S_{1}}(U))$.
So, $R(x)\subseteq\mathrm{int}_{S_{1}}(U)$. As $\mathrm{int}_{S_{1}}(U)$
is an open of $\tau$, for each $y\in R(x)$ there exists $W_{y}\in\mathrm{Clop}(X_{1})$
such that $y\in W_{y}$ and $W_{y}\subseteq\mathrm{int}_{S_{1}}(U)$.
So, $R(x)\subseteq\bigcup\left\{ W_{y}\in\mathrm{Clop}(X_{1}):y\in W_{y}\subseteq\mathrm{int}_{S_{1}}(U)\right\} $.
Since $R(x)$ is closed in $\left<X_{2},\tau\right>$, it is compact.
So, there exists $W_{y_{1}},\ldots,W_{y_{n}}\in\mathrm{Clop}(X_{1})$
such that $R(x)\subseteq W=W_{y_{1}}\cup\ldots\cup W_{y_{n}}\subseteq\mathrm{int}_{S_{1}}(U)$.
Then, $x\in h_{R}(W)$ and $W\subseteq\mathrm{int}_{S_{1}}(U)$. As
$h_{R}(W)\in h_{R}\left[\downarrow_{S_{1}}(U)\right]\subseteq\downarrow_{S_{2}}(h_{R}(U))$,
we have $h_{R}(W)\subseteq\mathrm{int}_{S_{2}}(h_{R}(U))$, and thus
$x\in\mathrm{int}_{S_{2}}(h_{R}(U))$.
\end{proof}

\begin{lem}
\label{lem:round-homo}Let $\left<A,\prec_{1}\right>,$$\left<A_{2},\prec_{2}\right>\in\mathrm{SubS4}$.
Let $h:A_{1}\rightarrow A_{2}$ be a meet-homomorphism satisfying
$\mathrm{(MO)}$. Then the following conditions are equivalent:
\begin{enumerate}
\item For every round filter $F$ of $A_{2}$, $h^{-1}\left[F\right]$ is
a round filter of $A_{1}$.
\item $h$ satisfies the condition $\mathrm{(QH)}$.
\item \label{condition strong sub relation}$\mathrm{int}_{S_{2}}(h_{R}(U))\subseteq h_{R}(\mathrm{int}_{S_{1}}(U))$,
for all $U\in\mathrm{Clop}(X_{1})$. 
\end{enumerate}
\end{lem}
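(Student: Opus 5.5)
I would prove the cycle $(2)\Leftrightarrow(3)$ directly, and separately $(1)\Leftrightarrow(2)$, using the identifications already established: $\mathrm{int}_{S_i}(\varphi_i(a))=\beta(\downarrow_i a)$ by (\ref{eq:igualdad-1}), $h_R(\varphi_1(a))=\varphi_2(h(a))$, and the identity $h_R(\beta(\downarrow_1 a))=\beta(h[\downarrow_1 a])$ proved inside Lemma \ref{lem:morfismo subordination}. With these, condition (3) for $U=\varphi_1(a)$ reads $\beta(\downarrow_2 h(a))\subseteq\beta(h[\downarrow_1 a])$. Here $\beta$ of a subset means the open set $\bigcup\{\varphi_2(c):c\in\cdot\}$, which equals $\beta$ of the generated ideal $(h[\downarrow_1 a]]$; note that $h[\downarrow_1a]$ is directed upward, since $\downarrow_1 a$ is an ideal and $h$ is monotone. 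Since $\beta$ is an order isomorphism between ideals and open sets of the Stone space, the inclusion of opens is equivalent to $\downarrow_2 h(a)\subseteq (h[\downarrow_1 a]]$, which is exactly the reformulation of (QH) noted after the list of conditions. This gives $(2)\Leftrightarrow(3)$ in one stroke. The only care point is justifying that $\beta$ reflects inclusion. I would argue this concretely: if $c\in\downarrow_2 h(a)$, then $\varphi_2(c)$ is compact and covered by the $\varphi_2(h(d))$ with $d\prec_1 a$. Directedness then gives a single $d$ with $c\le h(d)$.

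For $(2)\Rightarrow(1)$, I would let $F$ be a round filter of $A_2$. Then $h^{-1}[F]$ is a filter by the preceding lemma on meet-homomorphisms. If $a\in h^{-1}[F]$, roundness gives $e\in F$ with $e\prec_2 h(a)$. By (QH) there is $c\prec_1 a$ with $e\le h(c)$, so $h(c)\in F$, i.e.\ $c\in h^{-1}[F]$, and hence $h^{-1}[F]$ is round.

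For $(1)\Rightarrow(2)$, suppose $e\prec_2 h(b)$. I would take $F={\uparrow}_2[e)$, the filter $\{d: \exists e'\ge e,\ e'\prec_2 d\}$, which is a filter by Lemma \ref{lem:filter and ideal}. It is round by (S6): if $e\prec_2 d$, pick $e\prec_2 k\prec_2 d$, so that $k\in F$ and $k\prec_2 d$. Since $h(b)\in F$, the hypothesis gives that $h^{-1}[F]$ is round, so there is $c\prec_1 b$ with $h(c)\in F$, i.e.\ $e\prec_2 h(c)$. By (S5), $e\le h(c)$, which gives (QH).

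I expect the main obstacle to be the bookkeeping in $(2)\Leftrightarrow(3)$, namely making sure the identity $h_R(\beta(\downarrow_1 a))=\beta(h[\downarrow_1 a])$ from the previous lemma is correctly applied and that the compactness/directedness argument converting the open-set inclusion into (QH) is stated cleanly. The (MO) hypothesis is only needed so that the reverse inclusion (already given by Lemma \ref{lem:morfismo subordination}) makes (3) equivalent to an equality, and is not essential for the equivalences themselves.
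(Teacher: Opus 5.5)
Your argument is correct, but it is organized differently from the paper's. The paper proves the cycle $(1)\Rightarrow(2)\Rightarrow(3)\Rightarrow(1)$:
\begin{itemize}
\item it obtains (QH) from (1) by contradiction, picking an ultrafilter $x$ of $A_{2}$ with ${\downarrow}h(a)\cap x\neq\emptyset$ and $h[{\downarrow}a]\cap x=\emptyset$, and applying (1) to the round filter ${\uparrow}x$;
\item it checks $(2)\Rightarrow(3)$ pointwise on the ultrafilters $y\supseteq h^{-1}[x]$;
\item in $(3)\Rightarrow(1)$ it passes directly from the topological inclusion to roundness of $h^{-1}[F]$, by a compactness argument on $\varphi(a)$.
\end{itemize}
That last step is really $(3)\Rightarrow(2)\Rightarrow(1)$ fused together. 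It also tacitly uses compactness of $R(x)$ to write $h_{R}(\mathrm{int}_{S_{1}}\varphi(b))$ as the union of the $h_{R}(W)$.

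You instead separate two equivalences. The first, $(1)\Leftrightarrow(2)$, is purely algebraic: testing (1) on the principal round filter ${\uparrow}e$ yields (QH) directly, with no ultrafilter and no contradiction. The second, $(2)\Leftrightarrow(3)$, is purely dual. You get it by reusing the identity $h_{R}(\beta({\downarrow}_{1}a))=\beta(h[{\downarrow}_{1}a])$ from Lemma \ref{lem:morfismo subordination}, together with the fact that $\beta$ reflects inclusion of ideals. Your compactness-plus-directedness remark is exactly the justification needed there.

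Your route makes it visible that the topology enters through a single translation step and that (MO) is never used. The paper's cycle needs fewer implications and is self-contained, since it does not rely on the auxiliary identity from the previous lemma.
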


\begin{proof}
$\left(1\right)\Rightarrow\left(2\right)$ We note that to prove (2)
is equivalent to prove that ${\downarrow}h(a)\subseteq\left(h({\downarrow}a)\right]$,
for all $a\in A_{2}$. Suppose that there exist $a\in A_{2}$ such
that $\downarrow h(a)\nsubseteq\left(h(\downarrow a)\right].$ Then
there exists an ultrafilter $x$ of $A_{2}$ such that ${\downarrow}h(a)\cap x\neq\emptyset$
and $h({\downarrow}a)\cap x=\emptyset$. So, $h(a)\in\uparrow(x)=F$,
and thus $a\in h^{-1}(\uparrow x)=h^{-1}(F)$. As $F$ is a round
filter, we have that $h^{-1}(F)$ is a round filter. So, ${\downarrow}a\cap h^{-1}(F)\neq\emptyset$,
i.e, there exist $c\prec a$ and $h(c)\in F\subseteq x$. Since, $c\in{\downarrow}a$
implies $h(c)\in h({\downarrow}a)$, we deduce that $h(c)\in h({\downarrow}a)\cap x$,
which is a contradiction. Therefore, ${\downarrow}h(a)\nsubseteq\left(h({\downarrow}a)\right]$,
for all $a\in A$. 

$\left(2\right)\Rightarrow\left(3\right)$ Let $U=\varphi_{1}(a)$.
Let $x\in\mathrm{int}_{S_{2}}(h_{R}(U))=\beta_{2}({\downarrow}h(a))$.
Let $y\in X_{2}$ such that $h^{-1}(x)\subseteq y$. We prove that
${\downarrow}a\cap y\neq\emptyset$. As $x\in\beta_{2}({\downarrow}h(a))$,
we have ${\downarrow}(h(a))\cap x\neq\emptyset$. Then there exists
$b\in A_{2}$ such that $b\in x$ and $b\prec h(a)$. Then there exists
$c\in A_{1}$ such that $b\leq h(c)$ and $c\prec a$. So, $h(c)\in x$,
i.e., $c\in h^{-1}(x)\subseteq y$. So, ${\downarrow}a\cap y\neq\emptyset$,
and thus $y\in\beta({\downarrow}a)$. Therefore, $h_{R}(\beta({\downarrow}a))=h_{R}(\mathrm{int}_{S_{1}}(\varphi(a)))$.

$\left(3\right)\Rightarrow\left(1\right)$ Let $F$ be a round filter
of $A_{2}$. As $F$ is a lattice filter, $h^{-1}\left[F\right]$
is a lattice filter. Let $b\in h^{-1}\left[F\right]$. Then $h(b)\in F$,
and as $F$ is a round filter, there exists $a\in A_{2}$ such that
$a\prec h(b)$ and $a\in F$. Then, $\varphi(a)\prec_{S_{2}}\varphi(h(b))=h_{R}(\varphi(b))$,
i.e, 
\[
\varphi(a)\subseteq\mathrm{int}_{S_{2}}h_{R}(\varphi(b))\subseteq h_{R}(\mathrm{int}_{S_{1}}(\varphi(b)).
\]
We note that $\mathrm{int}_{S_{1}}(\varphi(b)=\bigcup\left\{ W\in\Clop(X_{1}):W\subseteq\mathrm{int}_{S_{1}}\varphi(b)\right\} $
because $\mathrm{int}_{S_{1}}(\varphi(b)$ is an open in the Stone
space $\left<\Ul(A_{1}),\tau\right>$. So, 
\[
h_{R}(\mathrm{int}_{S_{1}}(\varphi(b))=\bigcup\left\{ h_{R}(W):W\subseteq\mathrm{int}_{S_{1}}\varphi(b)\right\} .
\]
 Therefore, $\varphi(a)\subseteq\bigcup\left\{ h_{R}(W):W\subseteq\mathrm{int}_{S_{1}}\varphi(b)\right\} $,
and as $\varphi(a)$ is compact, there exist $W_{1},\ldots,W_{n}\subseteq\mathrm{int}_{S_{1}}\varphi(b)$
such that 
\[
\varphi(a)\subseteq h_{R}(W_{1})\cup\ldots\cup h_{R}(W_{n})\subseteq h_{R}(W_{1}\cup\ldots\cup W_{n}).
\]
 Let $W_{1}\cup\ldots\cup W_{n}=\varphi(c)$. Then, $\varphi(a)\subseteq h_{R}(\varphi(c))=\varphi(h(c))$,
i.e., $a\leq h(c)$ and $c\prec b$. Thus we have proved that of there
exists $c\in A$ such that $a\leq h(c)$ and $c\prec b$. Since $a\in F$,
$c\in h^{-1}(F)$, and as, $c\prec b$, we have that $h^{-1}\left[F\right]$
is a round filter. 
\end{proof}

By Lemma \ref{lem:morfismo subordination} and \ref{lem:round-homo}
we have the following characterization of meet-homomorphisms satisfying
the conditions (MO) and (QH). 
\begin{cor}
\label{cor: morfi (MO)+(QH)}Let $\left<A,\prec_{1}\right>,$$\left<A_{2},\prec_{2}\right>\in\mathrm{SubS4}$.
Let $h:A_{1}\rightarrow A_{2}$ be a meet-homomorphism. Then the following
conditions are equivalent:
\begin{enumerate}
\item $h$ satisfies the conditions $\mathrm{(MO)}$ and $\mathrm{(QH)}$.
\item $h_{R}(\mathrm{int}_{S_{1}}(U))=\mathrm{int}_{S_{2}}(h_{R}(U))$,
for all $U\in\mathrm{Clop}(X_{1})$.
\end{enumerate}
\end{cor}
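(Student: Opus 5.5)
The corollary follows by combining Lemma \ref{lem:morfismo subordination} and Lemma \ref{lem:round-homo}. The only subtlety is that Lemma \ref{lem:round-homo} is stated for meet-homomorphisms that already satisfy $\mathrm{(MO)}$. The plan is to prove the two directions separately and, in each, to use the lemma whose hypothesis is available.

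$(1)\Rightarrow(2)$. Assume $h$ satisfies $\mathrm{(MO)}$ and $\mathrm{(QH)}$. By Lemma \ref{lem:morfismo subordination}, $\mathrm{(MO)}$ gives
\[
h_{R}(\mathrm{int}_{S_{1}}(U))\subseteq\mathrm{int}_{S_{2}}(h_{R}(U))
\]
for every $U\in\mathrm{Clop}(X_{1})$. Since $h$ satisfies $\mathrm{(MO)}$, Lemma \ref{lem:round-homo} applies, and its implication $(2)\Rightarrow(3)$ turns $\mathrm{(QH)}$ into the reverse inclusion
\[
\mathrm{int}_{S_{2}}(h_{R}(U))\subseteq h_{R}(\mathrm{int}_{S_{1}}(U)).
\]
The two inclusions together give the equality in (2).

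$(2)\Rightarrow(1)$. Assume the equality holds for all clopens $U$. Its inclusion ``$\subseteq$'' is condition (2) of Lemma \ref{lem:morfismo subordination}, so $h$ satisfies $\mathrm{(MO)}$. Now that $\mathrm{(MO)}$ is established, Lemma \ref{lem:round-homo} may be invoked. The inclusion ``$\supseteq$'' is its condition (3), and the implication $(3)\Rightarrow(2)$ of that lemma yields $\mathrm{(QH)}$.

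The only step needing care is this ordering in $(2)\Rightarrow(1)$: $\mathrm{(MO)}$ must be derived first so that the hypothesis of Lemma \ref{lem:round-homo} is met before it is used. One should also check that the route $(3)\Rightarrow(2)$ of Lemma \ref{lem:round-homo} passes through condition (1) of that lemma, the round-filter characterization, which is harmless. Alternatively, $\mathrm{(QH)}$ can be obtained directly. If $a\prec_{2}h(b)$, then
\[
\varphi(a)\subseteq\mathrm{int}_{S_{2}}(h_{R}(\varphi(b)))\subseteq h_{R}(\mathrm{int}_{S_{1}}(\varphi(b))).
\]
Compactness of $\varphi(a)$, exactly as in the proof of Lemma \ref{lem:round-homo}, then produces $c$ with $a\leq h(c)$ and $c\prec_{1}b$. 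No further obstacle is expected.
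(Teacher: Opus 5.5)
Your proof is correct and follows the paper, which gets the corollary simply by combining Lemma \ref{lem:morfismo subordination} (the inclusion $\subseteq$ is equivalent to $\mathrm{(MO)}$) with Lemma \ref{lem:round-homo} (the inclusion $\supseteq$ is equivalent to $\mathrm{(QH)}$). Deriving $\mathrm{(MO)}$ first is sound, though the paper's proof of Lemma \ref{lem:round-homo} never actually uses $\mathrm{(MO)}$, so that ordering is only a formality; your direct compactness argument for $\mathrm{(QH)}$ is likewise valid.
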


Now we will characterize the meet-homomorphisms fulfilling the conditions
(CO) and (DH). 
\begin{lem}
\label{charact hemi1}Let $\left<A,\prec_{1}\right>,$$\left<A_{2},\prec_{2}\right>\in\mathrm{SubS4}$.
Let $h:A_{1}\rightarrow A_{2}$ be a meet-homomorphism such that $h(0)=0$.
Then the following conditions are equivalent: 
\begin{enumerate}
\item $h$ satisfies the condition $\mathrm{(DH})$. 
\item \label{condition weak de vries}$g_{R}(\mathrm{int}_{S_{1}}(U))\subseteq\mathrm{int}_{S_{2}}(h_{R}(U))$,
for all $U\in\mathrm{Clop}(X_{1})$,
\item $g_{R}\left[{\downarrow}_{S_{1}}(U)\right]\subseteq{\downarrow}_{S_{2}}(h_{R}(U))$,
for all $U\in\mathrm{Clop}(X_{1})$.
\end{enumerate}
\end{lem}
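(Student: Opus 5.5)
I would follow the template of Lemma \ref{lem:morfismo subordination}, replacing $h$ by the dual join-homomorphism $g$ on the left-hand side. Identify $A_1$ with $\mathrm{Clop}(X_1)$ and $A_2$ with $\mathrm{Clop}(X_2)$ via the Stone maps. Then $h_{R}(\varphi_1(a))=\varphi_2(h(a))$, and likewise $g_R(\varphi_1(a))=\varphi_2(g(a))$, since $g_R(U)=X_2\setminus h_R(U^c)$. Condition (DH) now reads: $V\prec_{S_1}U$ implies $g_R(V)\prec_{S_2}h_R(U)$ for clopens $U,V$. This is exactly condition (3), because ${\downarrow}_{S_1}(U)$ is the set of clopens $V$ with $V\subseteq \mathrm{int}_{S_1}(U)$. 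So $(1)\Leftrightarrow(3)$ is a direct translation through the subordination isomorphism $\varphi$ of Theorem \ref{prop:propierties of dual space of an alg}.

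For $(2)\Rightarrow(3)$, take $V\in{\downarrow}_{S_1}(U)$. Since $g_R$ is monotone on arbitrary subsets (if $Z\subseteq Z'$ then $\{x:R(x)\cap Z\neq\emptyset\}\subseteq\{x:R(x)\cap Z'\neq\emptyset\}$), we get
\[
g_R(V)\subseteq g_R(\mathrm{int}_{S_1}(U))\subseteq \mathrm{int}_{S_2}(h_R(U)),
\]
which says that $g_R(V)\in{\downarrow}_{S_2}(h_R(U))$.

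For $(3)\Rightarrow(2)$, let $x\in g_R(\mathrm{int}_{S_1}(U))$ and pick $y\in R(x)\cap \mathrm{int}_{S_1}(U)$. By (Sp3), $\mathrm{int}_{S_1}(U)$ is $\tau$-open, and $\mathrm{Clop}(X_1)$ is a base, so there is a clopen $W$ with $y\in W\subseteq \mathrm{int}_{S_1}(U)$. Then $x\in g_R(W)$ and $W\in{\downarrow}_{S_1}(U)$. By (3), $g_R(W)\subseteq \mathrm{int}_{S_2}(h_R(U))$, hence $x\in\mathrm{int}_{S_2}(h_R(U))$. This direction is easier than its analogue in Lemma \ref{lem:morfismo subordination}: $g_R$ only asks for one point of $R(x)$ in the open set, so no compactness argument on $R(x)$ is needed. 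In fact, $g_R$ of a $\tau$-open set $O$ is the union of the sets $g_R(W)$ over clopens $W\subseteq O$.

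The only point needing care is that $g_R(\varphi_1(a))=\varphi_2(g(a))$, with $g(a)=\neg h(\neg a)$. This follows by taking complements in $h_R(\varphi_1(\neg a))=\varphi_2(h(\neg a))$. The hypothesis $h(0)=0$ is not really used in these equivalences; it only ensures $h\le g$, so that (DH) implies (MO), as noted before the lemma. I expect no serious obstacle. The main step to verify is the identification of (DH) with (3), namely that $\prec_{S_i}$ corresponds to $\prec_i$ under $\varphi_i$, which is exactly Theorem \ref{prop:propierties of dual space of an alg}.
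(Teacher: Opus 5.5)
Your proof is correct. It differs from the paper's only in how the equivalences are chained.

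The paper proves $(1)\Leftrightarrow(2)$ and $(2)\Leftrightarrow(3)$. Its arguments for $(2)\Rightarrow(3)$ (monotonicity of $g_R$) and $(3)\Rightarrow(2)$ (a clopen $W$ with $y\in W\subseteq\mathrm{int}_{S_1}(U)$ around a witness $y\in R(x)\cap\mathrm{int}_{S_1}(U)$) coincide with yours. For the link with (DH), the paper argues $(2)\Rightarrow(1)$ by monotonicity of $g_R$ and proves $(1)\Rightarrow(2)$ directly. You instead observe that $(1)\Leftrightarrow(3)$ is a literal transcription of (DH). It goes through the subordination isomorphisms $\varphi_i$ and the identities $h_R\circ\varphi_1=\varphi_2\circ h$ and $g_R\circ\varphi_1=\varphi_2\circ g$; you correctly obtain the latter by complementation.

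Your arrangement is the tidier one. The paper's $(1)\Rightarrow(2)$, as written, fixes one $a$ with $\varphi(a)\subseteq\mathrm{int}_{S_1}(\varphi(b))$. It then ends with a bound on $g_R(\mathrm{int}_{S_1}(\varphi(a)))$ instead of on $g_R(\mathrm{int}_{S_1}(\varphi(b)))$. Completing it requires exactly the observation you make: $\mathrm{int}_{S_1}(\varphi(b))$ is the union of the clopens $\varphi(a)$ with $a\prec_1 b$, and $g_R$ commutes with unions. Your route supplies this automatically through $(3)\Rightarrow(2)$.

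You are also right that $h(0)=0$ is never used in the equivalences; the paper's proof does not invoke it either.
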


\begin{proof}
$\left(1\right)\Rightarrow\left(2\right)$ Let $U\in\mathrm{Clop}(X_{1})$.
Then there exists $b\in A_{1}$ such that $U=\varphi(b)$. We can
suppose that $\mathrm{int}_{S_{1}}(\varphi(b))\neq\emptyset$. Then
there exists $a\in A_{1}$ such that $\varphi(a)\subseteq\mathrm{int}_{S_{1}}(\varphi(b))$,
i.e. $\varphi(a)\prec_{S_{1}}\varphi(b)$, and so $a\prec b$. By
condition (DH) we get $g(a)\prec h(b)$, and consequently $\varphi(g(a))=g_{R}(\varphi(a))\prec_{S_{2}}\varphi(h(b))$.
So, $g_{R}(\varphi(a))\subseteq\mathrm{int}_{S_{2}}(\varphi(h(b)))$,
and as $\mathrm{int_{S_{1}}}(\varphi(a))\subseteq\varphi(a)$, we
have $g_{R}(\mathrm{(int}_{S_{1}}(\varphi(a)))\subseteq\mathrm{int}_{S_{2}}(\varphi(h(b)))$.

$\left(2\right)\Rightarrow\left(1\right)$ Let $a\prec_{1}b$. Then
$\varphi(a)\subseteq\mathrm{int}_{S_{1}}(\varphi(b))$, and as $g_{R}$
is monotonic we have
\[
g_{R}(\varphi(a))=\varphi(g(a))\subseteq g_{R}(\mathrm{int}_{S_{1}}(\varphi(b))\subseteq\mathrm{int}_{S_{2}}(h_{R}(\varphi(b)))=\mathrm{int}_{S_{2}}(\varphi(h(b)))).
\]
Thus, $g(a)\prec_{2}h(b)$.

$\left(2\right)\Rightarrow\left(3\right)$ Let $V\in{\downarrow}_{S_{1}}(U)$,
i.e., $V\subseteq\mathrm{int}_{S_{1}}(U)$. As $g_{R}$ is monotonic,
$g_{R}(V)\subseteq g_{R}(\mathrm{int}_{S_{1}}(U))\subseteq\mathrm{int}_{S_{2}}(h_{R}(U))$.
Then, $g_{R}(V)\in{\downarrow}_{S_{2}}(h_{R}(U))$. Thus, $g_{R}\left[{\downarrow}_{S_{1}}(U)\right]\subseteq{\downarrow}_{S_{2}}(h_{R}(U))$. 

$\left(3\right)\Rightarrow\left(2\right)$ Let $x\in g_{R}(\mathrm{int}_{S_{1}}(U))$.
So, $R(x)\cap\mathrm{int}_{S_{1}}(U)$. Then there exists $y\in R(x)$
and $y\in\mathrm{int}_{S_{1}}(U)$. So $\mathrm{int}_{S_{1}}(U)\in\tau$,
there exists $W\in\mathrm{Clop}(X_{1})$ such that $y\in W\subseteq\mathrm{int}_{S_{1}}(U)$.
Then $W\in{\downarrow}_{S_{1}}(U)$, and so $g_{R}(W)\in g_{R}\left[{\downarrow}_{S_{1}}(U)\right]\subseteq{\downarrow}_{S_{2}}(h_{R}(U))$.
Thus, $g_{R}(W)\subseteq\mathrm{int}_{S_{2}}(h_{R}(U))$, and as $x\in g_{R}(W)$,
we get that $x\in\mathrm{int}_{S_{2}}(h_{R}(U))$.
\end{proof}

By Lemma \ref{lem:round-homo} and Lemma \ref{charact hemi1} we have
the following characterization of meet-homomorphisms satisfying the
conditions$\mathrm{(CO)}$, (QH) and $\mathrm{(DH)}$.
\begin{cor}
Let $\left<A,\prec_{1}\right>,$$\left<A_{2},\prec_{2}\right>\in\mathrm{SubS4}$.
Let $h:A_{1}\rightarrow A_{2}$ be a meet-homomorphism. Then the following
conditions are equivalent
\begin{enumerate}
\item $h$ satisfies the conditions $\mathrm{(CO)}$, $\mathrm{(QH)}$ and
$\mathrm{(DH)}$.
\item $R(x)\neq\emptyset$, for all $x\in X(A_{2})$, $g_{R}(\mathrm{int}_{S_{1}}(U))\subseteq\mathrm{int}_{S_{2}}(h_{R}(U))$,
and $\mathrm{int}_{S_{2}}(h_{R}(U))\subseteq h_{R}(\mathrm{int}_{S_{1}}(U))$,
for all $U\in\mathrm{Clop}(X_{1})$.
\end{enumerate}
\end{cor}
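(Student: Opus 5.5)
The corollary splits into three pieces: (CO), (QH) and (DH). The plan is to match each of them with one of the three topological clauses in (2). Two of the matches come straight from results already in the paper. Lemma \ref{charact hemi1} says that, for a meet-homomorphism with $h(0)=0$, (DH) is equivalent to $g_{R}(\mathrm{int}_{S_{1}}(U))\subseteq\mathrm{int}_{S_{2}}(h_{R}(U))$ for all $U\in\mathrm{Clop}(X_{1})$. Lemma \ref{lem:round-homo} says that, in the presence of (MO), (QH) is equivalent to $\mathrm{int}_{S_{2}}(h_{R}(U))\subseteq h_{R}(\mathrm{int}_{S_{1}}(U))$. So the remaining work is to handle (CO) directly and to make sure (MO) is available whenever Lemma \ref{lem:round-homo} is invoked.

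First step: show that (CO) holds iff $R(x)\neq\emptyset$ for every $x\in X_{2}$. By Halmos--Wright duality, $h_{R}(\varphi_{1}(a))=\varphi_{2}(h(a))$, and $h_{R}(\emptyset)=\{x:R(x)=\emptyset\}$. Hence $h(0)=0$ iff $\varphi_{2}(h(0))=\emptyset$ iff no $x$ has $R(x)=\emptyset$.

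Direction $(1)\Rightarrow(2)$. Clause (i) of (2) follows from (CO) by the first step. Clause (ii) follows from (CO) and (DH) by Lemma \ref{charact hemi1}. For clause (iii), recall the remark before Lemma \ref{lem:morfismo subordination}: (CO) and (DH) together give $h(a)\le g(a)$ and hence (MO). With (MO) in hand, Lemma \ref{lem:round-homo} turns (QH) into clause (iii).

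Direction $(2)\Rightarrow(1)$. From $R(x)\neq\emptyset$ we get (CO) by the first step. Then Lemma \ref{charact hemi1}, which needs $h(0)=0$ and so can now be applied, gives (DH) from clause (ii). (CO) and (DH) again yield (MO), and then Lemma \ref{lem:round-homo} gives (QH) from clause (iii).

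The main obstacle is the order of the hypotheses. Lemma \ref{lem:round-homo} is stated only for $h$ already satisfying (MO), so (MO) must be derived before it is used; it cannot be assumed. That is why the chain runs (CO), then (DH), then (MO), then (QH).

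A further point is that the implication $(2)\Rightarrow(3)$ of Lemma \ref{lem:round-homo} does not itself use (MO). So even if one doubted the remark for some reason, the inclusion in clause (iii) could still be obtained from (QH) alone by the argument given there. I would check this carefully, but I expect the corollary to follow just by assembling the cited lemmas with the small computation $h_{R}(\emptyset)=\{x:R(x)=\emptyset\}$.
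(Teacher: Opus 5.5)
Your proposal is correct and follows the paper's route. The paper obtains this corollary by combining Lemma~\ref{lem:round-homo} and Lemma~\ref{charact hemi1}, and you carry out exactly that assembly, additionally making explicit two steps the paper leaves tacit: that $h(0)=0$ holds iff $R(x)\neq\emptyset$ for all $x$ (via $h_{R}(\emptyset)=\{x:R(x)=\emptyset\}$), and that $\mathrm{(MO)}$ is derived from $\mathrm{(CO)}$ and $\mathrm{(DH)}$ before Lemma~\ref{lem:round-homo} is invoked.
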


It is easy to verify that the composition of meet-homomorphisms satisfying
any of the conditions (CO), (MO), (QH) or (DH) is a meet-homomorphism
satisfying the same conditions. Dually, it can be verified that the
same occurs with the Boolean relations associated with the meet-homomorphisms.

From the preceding results, one can define several categories of S4-subordination
algebras whose morphisms are meet-homomorphisms satisfying one of
the following sets of conditions: $\mathrm{(MO)}$, $\mathrm{(MO)}+\mathrm{(QH)}$,
$\mathrm{(DH)}$, or $\mathrm{(CO)}+\mathrm{(QH)+(DH)}$. As an illustration,
the category of S4-subordination algebras with meet-homomorphisms
satisfying $\mathrm{(MO)+(QH)}$ is dually equivalent to the category
of S4-subordination spaces whose morphisms are Boolean relations satisfying
condition (2) of Corollary \ref{cor: morfi (MO)+(QH)}. It is left
to the reader to complete the details of the different dualities that
can be proven. 

\medskip{}

We now present a characterization of meet-homomorphisms satisfying
condition (DH) in terms of maximal round filters, also called ends
in \cite{de Vries tesis}. The following characterization of ends
is useful
\begin{lem}
\cite{de Vries tesis} Let $\left<A,\mathrm{\prec}\right>\in\mathrm{SubS5}$.
A round filter $F$ is an end iff for $a,b\in A$, if $a\prec b$,
then $\neg a\in F$ or $b\in F$. 
\end{lem}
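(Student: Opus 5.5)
We prove the two implications separately. Throughout, an \emph{end} is a maximal element of the set of proper round filters, ordered by inclusion. The only tools needed are the axioms (S2), (S4), (S5), (S6), (S7) and Lemma \ref{lem:filter and ideal}.

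\emph{Sufficiency.} Let $F$ be a proper round filter with the stated property, and suppose $G\supsetneq F$ is a round filter. Take $b\in G\setminus F$. Since $G$ is round, there is $a\in G$ with $a\prec b$. By hypothesis $\neg a\in F$ or $b\in F$. As $b\notin F$, we get $\neg a\in F\subseteq G$. Together with $a\in G$ this gives $0=a\wedge\neg a\in G$, so $G$ is not proper. Hence $F$ is maximal among proper round filters, i.e.\ an end.

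\emph{Necessity.} Let $F$ be an end, and suppose $a\prec b$ with $b\notin F$ and $\neg a\notin F$; we derive a contradiction.

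The candidate enlargement is the filter $G$ generated by $F\cup{\uparrow}a$. By Lemma \ref{lem:filter and ideal}, ${\uparrow}a={\uparrow}[a)$ is a filter, so
\[
G=\left\{ x\in A:f\wedge c\leq x\text{ for some }f\in F\text{ and some }c\text{ with }a\prec c\right\}.
\]
Clearly $F\subseteq G$. Also $b\in G$ because $a\prec b$, so $G\neq F$. It remains to show that $G$ is round and proper, which contradicts the maximality of $F$.

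For roundness, let $x\geq f\wedge c$ with $f\in F$ and $a\prec c$.
\begin{itemize}
\item Since $F$ is round, there is $f'\in F$ with $f'\prec f$.
\item By (S6) there is $e$ with $a\prec e\prec c$; in particular $e\in{\uparrow}a$.
\item By (S4), $f'\wedge e\prec f$ and $f'\wedge e\prec c$, so (S2) gives $f'\wedge e\prec f\wedge c$.
\item By (S4) again, $f'\wedge e\prec x$, and $f'\wedge e\in G$.
\end{itemize}
Hence $G$ is round.

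Properness is the key step, and it is where the S5 axiom (S7) is used. Suppose $0\in G$, so $f\wedge c=0$ for some $f\in F$ and some $c$ with $a\prec c$. Then $f\leq\neg c$. By (S7), $\neg c\prec\neg a$, and by (S5) this gives $\neg c\leq\neg a$. Therefore $f\leq\neg a$, so $\neg a\in F$, contradicting our assumption.

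Thus $G$ is a proper round filter strictly containing $F$, contradicting the fact that $F$ is an end. Hence $\neg a\in F$ or $b\in F$, as required.
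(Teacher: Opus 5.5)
Your proof is correct. The paper gives no proof of this lemma; it cites it from de Vries's thesis, so there is no internal argument to compare with. Yours is the standard direct one: sufficiency comes from $a\wedge\neg a=0$, and necessity from enlarging $F$ to the filter generated by $F\cup{\uparrow}a$, with (S6) giving roundness and a separate check giving properness.

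Two remarks.

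First, in the sufficiency direction you assume that $F$ is proper. This assumption is genuinely needed and is only tacit in the statement. The whole algebra $A$ is a round filter, since $0\prec x$ for every $x$ by (S1) and (S4). It satisfies the condition trivially, but it is not an end.

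Second, (S7) is never actually used. In the properness step, $a\prec c$ already gives $a\le c$ by (S5). Hence $f\le\neg c\le\neg a$ directly, without passing through $\neg c\prec\neg a$. The rest of your argument uses only (S2), (S4) and (S6), plus nothing at all for sufficiency. So you have in fact proved the characterization for every algebra in $\mathsf{SubS4}$. In the paper, the S5 hypothesis matters elsewhere, for instance to know that ${\uparrow}P$ is an end for an ultrafilter $P$, but not for this lemma.
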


\begin{lem}
\label{characterizacio  end}Let $\left<A,\prec_{1}\right>,\left<A_{2},\prec_{2}\right>\in\mathrm{SubS5}$.
Let $h:A_{1}\rightarrow A_{2}$ be a meet-homomorphism. Then the following
conditions are equivalent:
\begin{enumerate}
\item If $F\in\mathrm{End}(A_{2})$, and $a\prec b$, then $\neg a\in h^{-1}(F)$
or $b\in h^{-1}(F)$, for $a,b\in A_{1}$. 
\item If $a\prec b$, then $g(a)\prec h(b),$ for all $a,b\in A_{1}$.
\end{enumerate}
\end{lem}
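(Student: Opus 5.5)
The plan is to reduce both directions to the end characterization of de Vries quoted just before the statement. Two facts do the work: the identity $h(\neg a)=\neg g(a)$, and the observation that in an S5-subordination algebra the filters ${\uparrow}x$, for $x$ an ultrafilter, are ends. Since $g(a)=\neg h(\neg a)$, we have $\neg a\in h^{-1}(F)$ iff $\neg g(a)\in F$, and $b\in h^{-1}(F)$ iff $h(b)\in F$. So condition (1) says exactly this: for every end $F$ of $A_2$ and every $a\prec_1 b$, either $\neg g(a)\in F$ or $h(b)\in F$.

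\emph{(2) implies (1).} Let $a\prec_1 b$ and let $F\in\mathrm{End}(A_2)$. By (2), $g(a)\prec_2 h(b)$. By the characterization of ends in de Vries's lemma, applied to the pair $g(a)\prec_2 h(b)$ in $A_2$, we get $\neg g(a)\in F$ or $h(b)\in F$. By the reformulation above, this is (1).

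\emph{(1) implies (2).} We argue by contraposition. Suppose $a\prec_1 b$ but $g(a)\not\prec_2 h(b)$, so $g(a)\notin{\downarrow}h(b)$. By Lemma \ref{lem:filter and ideal}, ${\downarrow}h(b)$ is an ideal. Hence there is an ultrafilter $x$ of $A_2$ with $g(a)\in x$ and $x\cap{\downarrow}h(b)=\emptyset$, which means $h(b)\notin{\uparrow}x$. By (S5) we have ${\uparrow}x\subseteq x$, and since $g(a)\in x$ this gives $\neg g(a)\notin{\uparrow}x$. It therefore suffices to show that $F={\uparrow}x$ is an end, because then $F$ violates (1).

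\emph{Key step: ${\uparrow}x$ is an end.} First, ${\uparrow}x$ is a filter by Lemma \ref{lem:filter and ideal}. It is proper: $0\notin x$, and $c\prec 0$ forces $c\le 0$ by (S5). It is round: if $d\in{\uparrow}x$, take $c\in x$ with $c\prec d$. By (S6) pick $e$ with $c\prec e\prec d$. Then $e\in{\uparrow}x$ and $e\prec d$. To see it is an end, we use de Vries's characterization. Let $c\prec d$ and interpolate $c\prec e\prec d$ using (S6). Since $x$ is an ultrafilter, either $e\in x$ or $\neg e\in x$.
\begin{itemize}
\item If $e\in x$, then $e\prec d$ gives $d\in{\uparrow}x$.
\item If $\neg e\in x$, then (S7) applied to $c\prec e$ gives $\neg e\prec\neg c$, so $\neg c\in{\uparrow}x$.
\end{itemize}
Thus ${\uparrow}x$ is an end, and the argument is complete. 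This is the step I expect to be the main obstacle. The crucial point is that the plain dichotomy $c\in x$ or $\neg c\in x$ does not suffice by itself. One must first interpolate an element $e$ and then invoke symmetry (S7), which is exactly where the SubS5 hypothesis is used.
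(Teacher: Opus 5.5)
Your proof is correct and follows the paper's argument: (2)$\Rightarrow$(1) applies de Vries's end characterization to $g(a)\prec_2 h(b)$ using $\neg g(a)=h(\neg a)$. For (1)$\Rightarrow$(2) you argue by contraposition with an ultrafilter $x$ containing $g(a)$ and disjoint from ${\downarrow}h(b)$, taking ${\uparrow}x$ as the violating end; your check that ${\uparrow}x$ is proper (via (S5)), round, and satisfies the end condition (via (S6) interpolation and (S7)) fills in a step the paper only asserts.
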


\begin{proof}
$\left(1\right)\Rightarrow\left(2\right)$ Suppose that there are
elements $a,b\in A$ such that $a\prec b$ but $g(a)\nprec h(b)$,
i.e., $g(a)\notin{\downarrow}h(b)$. Then there exists an ultrafilter
$P$ of $A_{2}$ such that $g(a)\in P$ and $P\cap{\downarrow}h(b)=\emptyset$.
As $A_{2}\in\mathsf{SubS5},$ $F={\uparrow}P$ is an end. So, $b\notin h^{-1}(F)$.
Since, $a\prec b$, we get $\neg a\in h^{-1}(F)$ or $b\in h^{-1}(F)$.
If $\neg a\in h^{-1}(F)$, we have $h(\neg a)\in F={\uparrow}P\subseteq P$,
but as $g(a)=\neg h(\neg a)\in P$, we get $h(\neg a)\wedge\neg h(\neg a)=0\in P$,
which is impossible. So, $b\in h^{-1}(F)$, i.e., $h(b)\in F={\uparrow}P$,
which is also a contradiction. Therefore, $g(a)\prec h(a)$. 

$\left(2\right)\Rightarrow\left(1\right)$ Suppose that $a\prec b$.
Let $F\in\mathrm{End}(A_{2})$. As $g(a)=\neg h(\neg a)\prec h(b)$
y $F$ is an end, $\neg\neg h(\neg a)=h(\neg a)\in F$ or $h(b)\in F$,
i.e, $\neg a\in h^{-1}(F)$ or $b\in h^{-1}(F)$. 
\end{proof}

Finally, we give an algebraic characterization of the meet-homomorphisms
satisfying the conditions (CO), (QH) and (DH) between two S5-subordination
algebras.
\begin{thm}
\label{cor:fundamental}Let $A_{1}$ and $A_{2}$ be two S5-subordination
algebras. Then a map $h:A_{1}\rightarrow A_{2}$ is a meet-homomorphism
satisfying the conditions $\mathrm{(CO)}$, $\mathrm{(QH)}$ and $\mathrm{(DH)}$
if and only if
\begin{enumerate}
\item $h^{-1}(F)$ is a proper filter of $A_{1}$ for each proper filter
$F$ of $A_{2}$.
\item $h^{-1}(F)$ is a round filter of $A_{1}$ for each round filter $F$
of $A_{2}$.
\item $h^{-1}(P)$ is an end of $A_{1}$ for each end $P$ of $A_{2}$.
\end{enumerate}
\end{thm}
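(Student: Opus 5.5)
The plan is to split the statement into three pieces, one for each of (1), (2), (3), each matched with one of the conditions (CO), (QH), (DH). Each piece will be handled by an earlier lemma together with a small argument about properness.

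\emph{The meet-homomorphism property and (CO) versus (1).} For $F=A_{2}$ we have $h^{-1}(A_{2})=A_{1}$, which is trivially a filter. Hence (1) gives that $h^{-1}(F)$ is a filter for every filter $F$ of $A_{2}$. By the lemma characterizing meet-homomorphisms via preimages of filters, $h$ is then a meet-homomorphism. Next, if $h(0)=0$, then $0\notin h^{-1}(F)$ for every proper filter $F$, so $h^{-1}(F)$ is proper. Conversely, if $h(0)\neq0$, choose an ultrafilter $P$ of $A_{2}$ with $h(0)\in P$; then $0\in h^{-1}(P)$, so $h^{-1}(P)$ is not proper. Thus, for a meet-homomorphism, (1) is equivalent to (CO).

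\emph{(QH) versus (2).} The plan is to use Lemma \ref{lem:round-homo}, which assumes (MO). In the forward direction, (MO) is available because (CO) together with (DH) implies (MO), as observed before Lemma \ref{lem:morfismo subordination}. In the backward direction, I will first establish (CO) and (DH) from (1) and (3), exactly as in the other two pieces, and then obtain (MO) by the same remark. With (MO) in hand, Lemma \ref{lem:round-homo} gives (2) if and only if (QH). I also note that the implication $(1)\Rightarrow(2)$ in that lemma's proof does not seem to use (MO) at all, which would give a shortcut if needed.

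\emph{(DH) versus (3).} Suppose (3) holds and let $P$ be an end of $A_{2}$. Then $h^{-1}(P)$ is an end of $A_{1}$, so by de Vries' characterization of ends, $a\prec b$ implies $\neg a\in h^{-1}(P)$ or $b\in h^{-1}(P)$. This is condition (1) of Lemma \ref{characterizacio  end}, which then yields (DH).

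For the converse, assume (CO), (QH) and (DH), and let $P$ be an end of $A_{2}$. I will show that $h^{-1}(P)$ is an end by verifying the three hypotheses of de Vries' characterization.
\begin{enumerate}
\item $h^{-1}(P)$ is a proper filter, by (CO) and the first piece.
\item $h^{-1}(P)$ is round, because ends are round filters and (2) has already been established from (MO) and (QH).
\item By Lemma \ref{characterizacio  end}, (DH) gives that $a\prec b$ implies $\neg a\in h^{-1}(P)$ or $b\in h^{-1}(P)$.
\end{enumerate}
De Vries' characterization then says that $h^{-1}(P)$ is an end.

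The main obstacle is this last verification. I expect to have to check carefully that the cited lemma of de Vries really states that a proper round filter with this dichotomy property is an end, and that properness is genuinely needed there; this is why (CO) enters the argument.
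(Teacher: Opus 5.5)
Your proposal is correct and follows essentially the same route as the paper: (1) yields the meet-homomorphism property and (CO); Lemma \ref{lem:round-homo} links (2) with (QH), with (MO) supplied by (CO) and (DH); and Lemma \ref{characterizacio  end}, together with de Vries' characterization of ends, links (3) with (DH), where properness of $h^{-1}(P)$ comes from (CO). You are slightly more careful than the paper in two places: you obtain the meet-homomorphism property from (1) by also handling the improper filter $A_{2}$, and you make sure (MO) is available before invoking Lemma \ref{lem:round-homo} in the backward direction.
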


\begin{proof}
$\Rightarrow)$ As $h$ is a meet-homomorphism, $h^{-1}(F)\in\mathrm{Fi}(A_{1})$
for every $F\in\mathrm{Fi}(A_{2})$. By the condition $h(0)=0$ we
have that $h^{-1}(F)$ is proper whenever $F$ is proper. The condition
(2) follow by Lemma \ref{lem:round-homo}. We prove (3). Let $P\in\mathrm{End}(A_{2})$.
Since $h$ is a meet-homomorphism satisfying the condition (DH), we
get that $h$ satisfies (MO). Then, by \ref{lem:round-homo}, $h$$^{-1}\left[P\right]$
is a round filter. By (1) $h^{-1}\left[P\right]$ is proper. By Lemma
\ref{characterizacio  end} we have that $h^{-1}\left[P\right]$ is
an end.

$\Leftarrow)$ From (1) we have that $h$ is a meet-homomorphism satisfying
the condition (CO). By Lemma \ref{lem:round-homo} we have that $h$
satisfies the condition (QH). Finally, the condition (DH) follows
by Lemma \ref{characterizacio  end}.
\end{proof}

\section*{Acknowledgements }

We thank the referee for their valuable comments and suggestions.

\section*{Declarations}

\subsection*{Founding}

This work was supported by Consejo Nacional de Investigaciones Científicas
Técnicas (PIP 11220200100912CO, CONICET, Argentina), Universidad Nacional
del Centro de la Provincia de Buenos Aires, Facultad de Ciencias Exactas
and Agencia Nacional de Promoción Científica y Tecnológica (PICT2019-2019-00882,
ANPCyT-Argentina). This project has also received funding from MOSAIC
Project 101007627 (European Union’s Horizon 2020 research and innovation
programme under the Marie Sklodowska-Curie).

\subsection*{Conflict of interest}

The authors declare no conflict of interest.

\subsection*{Ethics approval and consent to participate}

Not applicable.

\subsection*{Consent for publication}

Not applicable.

\subsection*{Data availability}

Not applicable.

\subsection*{Materials availability}

Not applicable.

\subsection*{Code availability}

Not applicable.

\subsection*{Author contribution}

Not applicable.

\end{document}